\numberwithin{equation}{section}
\newtheorem{theorem}{Theorem}[section]
\newtheorem{proposition}[theorem]{Proposition}
\newtheorem{corollary}[theorem]{Corollary}
\newtheorem{lemma}[theorem]{Lemma}
\newcommand{\cali}[1]{\mathscr{#1}}
\newcommand{\supp}{{\rm Supp}}
\newcommand{\dist}{\mathop{\mathrm{dist}}\nolimits}
\newcommand{\ddc}{\text{\normalfont dd}^c}
\newcommand{\dc}{\text{\normalfont d}^c}
\def\d{\operatorname{d}}
\newcommand{\Ta}{\text{\normalfont T}}
\newcommand{\id}{{\rm id}}
\newcommand{\ord}{{\rm ord}}
\newcommand{\codim}{{\rm codim\ \!}}
\newcommand{\C}{\mathbb{C}}
\newcommand{\D}{\mathbb{D}}
\newcommand{\N}{\mathbb{N}}
\newcommand{\R}{\mathbb{R}}
\renewcommand\P{\mathbb{P}}
\newcommand{\explain}[1]{\text{\scriptsize\sf [#1]}}
\title{\bf On the set of divisors with zero geometric defect}
\providecommand{\keywords}[1]{\textbf{\textit{Keywords:}} #1}
\providecommand{\subject}[1]{\textbf{\textit{Mathematics Subject Classification 2010:}} #1}
\author{Dinh Tuan Huynh and Duc-Viet Vu}
\newcommand{\Addresses}{{
		\bigskip
		\footnotesize
		\textsc{Dinh Tuan Huynh, Max Planck Institute for Mathematics, Vivatsgasse 7, 53111 Bonn, Germany \& Department of Mathematics, College of Education, Hue University, 34 Le Loi St., Hue City, Vietnam}
		
		\par\nopagebreak
		\noindent
		\textit{E-mail address}: \texttt{dinhtuanhuynh@mpim-bonn.mpg.de}
		\newline
		
		\noindent
		\textsc{Duc-Viet Vu, University of Cologne, Mathematical Institute, Weyertal 86-90, 50931, K\"oln,  Germany  \& Thang Long Institute of Mathematics and Applied Sciences, Hanoi, Vietnam}
		\noindent
		\par\nopagebreak
		\noindent
		\textit{E-mail address}: \texttt{vuduc@math.uni-koeln.de}	
}}
\date{\today}
\begin{document}
\maketitle
\begin{abstract}   Let $f: \C \to X$ be  a transcendental holomorphic curve into a complex projective manifold $X$. Let $L$ be a very ample line bundle on $X.$ Let $s$ be a very generic holomorphic section of $L$ and $D$ the zero divisor given by $s.$ We prove that   the \emph{geometric} defect of $D$ (defect of truncation $1$) with respect to $f$ is zero.  We also prove that $f$ almost misses  general enough analytic subsets on $X$ of codimension $2$.   
\end{abstract}
\noindent
\keywords{Nevanlinna theory}, {tangent current}, {density current}, {holomorphic curve}

\noindent
\subject{32U40}, {32H50}, {37F05}

%\tableofcontents

%%%%%%%%%%%%%%%%%%%%%%%%%%%%%
%%%%%%%%%%%%%%%%%%%%%%%%%%%%%%%%%%
\section{Introduction} \label{introduction}

Let $X$ be a compact K\"ahler manifold  and $D$ an effective divisor in $X.$  Let $f: \C \to X$ be  a  holomorphic curve such that $f(\C) \not \subset \supp D$. In Nevanlinna's theory, we are interested in understanding how often $f(\C)$ intersects  $D$. 

Denote by  $L$ the line bundle generated by $D$.  Let $\D_r$ be the disk of radius $r$ centered at the origin in $\C$. Based on the exhaustion $\C=\cup_{r>0}\D_r$, we will count the number of intersection points between $f(\D_r)$ and $D$, which is finite. Precisely, taking the $k$-truncated degrees of the divisor $f^*D$ on disks by
\[
n_f^{[k]}(t,D)
:=
\sum_{z\in\D_t}
\min
\,
\{k,\ord_zf^*D\}
\eqno
{{\scriptstyle (t\,>\,0)},}
\]
the \textsl{truncated counting function of $f$ at level} $k$ with respect to $D$ is then defined by taking the logarithmic average
\[
N_f^{[k]}(r,D)
\,
:=
\,
\int_1^r \frac{n_f^{[k]}(t, D)}{t}\,\d t
\eqno
{{\scriptstyle (r\,>\,1)}.}
\]
When $k=\infty$, we write $n_f(r,D)$, $N_f(r,D)$ instead of $n_f^{[\infty]}(r,D)$, $N_f^{[\infty]}(r,D)$. These functions count the number of points in $f(\D_r) \cap D$, taking into account the multiplicity. On the other hand, when $k=1$, the function $n^{[1]}_f(r,D)$ gives us the number of points in $f(\D_r) \cap D$ as a set.  We then call
 $N^{[1]}_f(r,D)$ the \emph{geometric counting function} of $f$ with respect to $D$.
 
 For every smooth $(1,1)$-form $\eta$ on $X,$ we put 
\[
T_f(r,\eta):= \int_1^r \frac{\d t}{t} \int_{\D_t} f^* \eta\eqno
{{\scriptstyle (r\,>\,1)}.}
\]
Observe that if $\eta, \eta'$ are two smooth closed $(1,1)$-forms in the same cohomology class, then $T_f(r, \eta)= T_f(r, \eta') + O(1)$ as $r \to \infty$ by the Lelong-Jensen formula. It follows that given a smooth Chern form $\omega_L$ of $L$, the \emph{characteristic function} of $f$ with respect to $L$ given by
\[
T_f(r,L):= T_f(r, \omega_L)
\]
is well-defined up to a bounded term as 
$r\to \infty$. 

By the First Main Theorem \cite[Th. 2.3.31]{Noguchi}, there holds
\[
N_f^{[1]}(r,D) \le N_f(r,D) \le  T_f(r,L) +O(1).
\]

On the other side, in Second Main Theorem, one tries to establish a lower bound for counting functions. Many such type of estimates are based on the work of Cartan \cite{Cartan1933}.  Let $X = \P^n$ and $f:\mathbb{C}\rightarrow\mathbb{P}^n$ be an entire holomorphic curve whose image is not contained in any hyperplane.  Denote by $T_f(r)$ the characteristic function of $f$ with respect to the hyperplane bundle of $\mathbb{ P}^n$. Let $\{H_j\}_{1\leq i\leq q}$ be a family of $q\geq n+2$ hyperplanes in $\mathbb{ P}^n$ in general position, \emph{i.e.,} any collection of $n+1$ members in this family has empty intersection. Then the classical Cartan's Second Main Theorem states that
\begin{align}\label{ine-cartan}
(q-n-1)T_f(r) \le \sum_{j=1}^q N^{[n]}_f(r,H_j) + O(\log T_f(r)+ \log r),
\end{align}
for $r$ outside a set of finite Lebesgue measure on $\R$.

An important  problem in Nevanlinna's theory is  to decrease the truncation level in (\ref{ine-cartan}) as low as possible. When $n=2$, the truncation level $2$ is optimal as showed by an example in \cite{Duval-Tuan}. However, it was conjectured that (\ref{ine-cartan}) still holds for $N^{[1]}_f(r,H_j)$ in place of $N^{[n]}_f(r,H_j)$, provided that $f$ is algebraically non-degenerate, \emph{i.e.}, $f(\C)$ is not contained in any proper algebraic subset of $X$. This conjecture is widely open. Note that in the context of abelian or semi-abelian varieties, such a Second Main Theorem type estimate with the optimal truncation level $1$ has been established in \cite{NWY2,NWY,Yamanoi}. The reader is referred to  \cite{Duval-Tuan,McQuillan,Noguchi,Ru} and references therein for more informations. 

Recall that the \emph{defect} of $D$ with respect to $f$ is defined as
\[
\delta_f(D):= \liminf_{r \to \infty}\bigg(1- \frac{N_f(r,D)}{T_f(r,L)}\bigg).
\]
For $k \in \N^*$, \emph{ the defect of truncation $k$} of $D$, which is denoted by  $\delta^{[k]}_f(D)$,  is then defined in a similar way with $N^{[k]}_f(r,D)$ in place of $N_f(r,D)$. We call $\delta^{[1]}_f(D)$ \emph{the geometric defect of $D$}.  It is trivial that
$$0 \le \delta_f(D) \le \delta_f^{[k]}(D)  \le \delta_f^{[1]}(D) \le 1$$
for $k \ge 1$.   A divisor with zero  geometric defect roughly signifies that  the logarithmic average growth of the cardinality of the set $f(\D_r) \cap D$ is the same as that of the area of $f(\D_r)$ as $r \to \infty.$ In the other extreme case where $\delta_f(D)=1$, the logarithmic average of the cardinality of the set $f(\D_r) \cap D$ counted with multiplicity is negligible with respect to the area of $f(\D_r)$. 

 Observe that a direct consequence of  (\ref{ine-cartan}) is that for $X= \P^n$ and for  very generic hyperplane $D$ in $\P^n$, we have $\delta^{[n]}_f(D)=0$, see Lemma \ref{le-truncationn} below.  The goal of this paper is to prove the following stronger statement which serves as an evidence supporting the above conjecture. 

\begin{theorem} \label{thedefect0}  Let $X$ be a complex projective manifold and $L$ a very ample line bundle on $X$. Let $E$ be the space of holomorphic sections of $L$.  Let $f: \C \to X$ be a transcendental holomorphic curve. Then for every effective divisor $D$ of $L$ outside a countable union of proper algebraic subsets of $\P(E)$, we have $\delta_f^{[1]}(D)=0$. 
\end{theorem}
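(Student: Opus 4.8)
The plan is to reduce the statement to an integral-geometric estimate for the counting function, averaged over the linear system $\P(E)$, and to control that average by the characteristic function $T_f(r,L)$ up to lower-order error. The key object is the family of divisors $D_s=\{s=0\}$ parametrized by $s\in\P(E)$. Since $L$ is very ample, the associated morphism $\iota:X\to\P(E^*)$ is an embedding, $D_s=\iota^*H_s$ for the corresponding hyperplane $H_s$, and $T_f(r,L)=T_{\iota\circ f}(r)+O(1)$. So without loss of generality I may work with $g:=\iota\circ f:\C\to\P^N$, $N=\dim\P(E)$, a transcendental (hence non-constant) curve, and try to show that for $D$ outside a countable union of proper algebraic subsets of $\P^N$ one has $N_g^{[1]}(r,H)=T_g(r)+o(T_g(r))$ along all $r$, or more precisely $\liminf_r (1-N_g^{[1]}(r,H)/T_g(r))=0$.

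The main step is a Crofton-type / averaging argument. For a fixed $r$, integrate $n_g^{[1]}(t,H_s)$ over $s\in\P^N$ against the Fubini–Study volume; by the Crofton formula on $\P^N$ the average of $n_g(t,H_s)$ equals (a constant times) the unintegrated characteristic $\int_{\D_t}g^*\omega_{FS}$, and after the logarithmic average in $t$ one gets $\int_{\P^N}N_g(r,H_s)\,d\mu(s)=T_g(r)+O(1)$. The difference $N_g(r,H_s)-N_g^{[1]}(r,H_s)$ counts the ``ramification contribution'' of $g^*H_s$, i.e.\ points where $g$ is tangent to $H_s$ with their excess multiplicities. The heart of the argument is to show this ramification term is $o(T_g(r))$ for generic $s$ — for this I would invoke tangent/density current techniques (the tools advertised in the abstract): the tangent current of the graph of $g$, or of the positive closed current of integration over $g$, along the diagonal-type locus detects exactly the set of directions in which $g$ is ramified, and the density of this current is controlled by $T_g(r)$; choosing $H_s$ to avoid the (countably many) bad directions picked out by this current forces the ramification defect to vanish. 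Equivalently, one uses that a transcendental curve cannot be tangent to ``too many'' hyperplanes too often, and encodes the exceptional hyperplanes as a countable union of proper subvarieties of $\P^N$.

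Two points require care. First, the passage from the integrated statement $\int_{\P^N}(1-N_g^{[1]}(r,H_s)/T_g(r))\,d\mu(s)\to 0$ along a subsequence of $r$ to the pointwise statement for generic fixed $s$: one uses a Fatou/Borel–Cantelli argument over a countable sequence $r_j\to\infty$ realizing the $\liminf$, obtaining that for $s$ outside a set of measure zero — which one then upgrades to a countable union of proper algebraic subsets using the algebraicity of the failure locus — the defect is zero. Second, and this is what I expect to be the principal obstacle, making the ramification term genuinely $o(T_g(r))$ rather than merely $O(T_g(r))$: the naive Crofton bound only gives that $N_g^{[1]}(r,H_s)$ and $N_g(r,H_s)$ have the same order, not that their ratio to $T_g(r)$ tends to $1$. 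Overcoming this is exactly where the transcendence hypothesis and the density-current analysis must be used quantitatively — one needs that the geometric counting function already captures the full growth $T_g(r)$, which should follow from the fact that the self-intersection/tangency current of a transcendental curve has no ``atom'' in generic directions, so its contribution is sub-maximal in the $\liminf$ sense. This is the step I would spend the most effort on, and I would expect the authors' use of tangent currents to be precisely the device that turns the $O$ into an $o$.
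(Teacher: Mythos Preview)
Your proposal correctly isolates the ramification term $N_f(r,D)-N_f^{[1]}(r,D)$ as the crux, but it has two genuine gaps. First, the Crofton averaging does no real work: by the First Main Theorem one already has $N_f(r,D)=T_f(r,L)-m_f(r,D)+O(1)$ for each individual $D$, so the issue is purely to kill the ramification term for very generic $D$, and averaging over $\P(E)$ does not help with that. More seriously, your passage from a measure-zero bad set (obtained via Fatou/Borel--Cantelli along a sequence $r_j$) to a countable union of proper algebraic subsets is unjustified: there is no a priori reason why $\{s:\delta_f^{[1]}(D_s)>0\}$, or its slice along a subsequence, is algebraic. You cannot ``upgrade'' measure zero to very generic without an algebraicity mechanism, and you have not supplied one.

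The paper's route supplies exactly the two missing ingredients. The key geometric idea you are lacking is the lift to the projectivized tangent bundle $\widehat X=\P(\Ta X)$: one lifts $f$ to $\widehat f(z)=(f(z),[f'(z)])$ and each divisor $D$ to the codimension-$2$ locus $\widehat D=\{(x,[v]):x\in D,\ v\in \Ta_xD\}$. The point is that a zero of $f^*D$ of multiplicity $m$ contributes multiplicity $m-1$ to $\widehat f^{-1}(\widehat D)$, so $N_{\widehat f}(r,\widehat D)=N_f(r,D)-N_f^{[1]}(r,D)$ exactly encodes the ramification. Now the family $\{\widehat D_a\}_{a\in\P(E)}$ is a smooth codimension-$2$ submanifold of $\widehat X\times\P(E)$, and one can apply the paper's Theorem~\ref{the-codim2V}: for very generic $a$ the density current $\widehat S\curlywedge[\widehat D_a]$ vanishes, where $\widehat S$ is a closed limit of Nevanlinna currents for $\widehat f$. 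The algebraicity of the bad set comes for free here, because it arises from Siu's semicontinuity theorem applied to Lelong numbers of pushforward currents (see the proof of Theorem~\ref{th-slice-density}). The comparison $T_{\widehat f}(r,\widehat\omega)=T_f(r,\omega)+o(T_f(r,\omega))$ outside a set of finite measure is McQuillan's tautological inequality (equivalently, the logarithmic derivative lemma), which lets one work on $\widehat X$ without losing control of the characteristic function. A further blowup of $\widehat X$ along $\widehat D$ and a First-Main-Theorem computation then convert the vanishing of the density current into $1-N_f^{[1]}(r_k,D)/T_f(r_k,\omega)\to 0$ along the chosen subsequence. None of this is accessible through averaging; the lift to $\P(\Ta X)$ is the idea that makes the density-current machinery bite.
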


We underline that the feature of Theorem \ref{thedefect0} is the truncation $1$ of the defect. By a recent result in \cite{brotbek-deng}, for every integer $d$  greater than an explicit number depending on $n$ and  for a generic  divisor $D$ of  $L^d$, we have $\delta^{[1]}_f(D) \le 1- 1/d$; see also \cite{Vu_Huynh_Xie} for a weaker estimate.  We also notice that Theorem \ref{thedefect0} is sharp in the case where $n=1$ by a result of Drasin \cite{Drasin}. 

Recall that every effective divisor $D$ of $L$ is the zero divisor of a holomorphic section of $L$ which is naturally identified with a complex line passing through the origin of $E.$ Hence, we can view $D$  as a point in $\P(E)$.  Note that we don't require that $f$ is algebraically non-degenerate and Theorem \ref{thedefect0} is clear if $f$ is a non-constant rational curve because the image of $f$ is an algebraic curve in $X.$  By using a basis of $E,$ we can embed $X$ into a projective space and the problem can be reduced to the case where $X= \P^n$ and $L$ is the hyperplane bundle. But this reduction doesn't make the problem easier.

Consider now an analytic subset $V$ of $X.$ By using local basis of the sheaf of ideals of holomorphic functions defining $V$, we can also define the counting function $N_f(r,V)$ even if $\codim V \ge 2$, see \cite[Sec. 2.4.1]{Noguchi} for details. Here is our second main result.   

\begin{theorem} \label{the-codim2V} Let $Z$ be a complex manifold.  Let $X$ be a compact K\"ahler manifold and $\omega$ a K\"ahler form on $X$. Let $f: \C \to X$  a transcendental holomorphic curve. Let $\mathcal{V}$ be a complex smooth submanifold of $X\times Z$ of codimension $s\ge 2$. Denote by $p_1,p_2$ the natural projections from $X\times Z$ to $X$, $Z$ respectively.  Assume that  the restriction $p_{1,\mathcal{V}}$ of $p_1$ to $\mathcal{V}$ is a submersion and the restriction $p_{2,\mathcal{V}}$ of $p_2$ to $\mathcal{V}$ is a surjection.  Then for any $a$ outside a countable union of proper analytic subsets of $Z$, we have 
\begin{align}\label{eq-the-codim2}
\liminf_{r\to \infty} \frac{N_f(r,\mathcal{V}_a)}{T_f(r,\omega)}=0,
\end{align}
 where  $\mathcal{V}_a:= p_1\big(\mathcal{V} \cap (X\times \{a\})\big)$. 
\end{theorem}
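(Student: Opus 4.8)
The plan is to transfer the problem to a single incidence variety in $\C\times Z$, run a Crofton-type averaging over the parameter $a$, and then convert the outcome into a $\liminf$ statement by a routine calculus lemma; the real difficulty will be to exhibit the exceptional set as a countable union of \emph{analytic} subsets rather than a mere null set. First I would form the incidence manifold
\[
\mathcal{W}:=G^{-1}(\mathcal{V}),\qquad G\colon\C\times Z\to X\times Z,\qquad G(z,a):=(f(z),a).
\]
Because $p_{1,\mathcal{V}}$ is a submersion, $G$ is transverse to $\mathcal{V}$ along $\mathcal{W}$: the image of $dG_{(z,a)}$ always contains $\{0\}\oplus T_aZ$, whereas $T\mathcal{V}$ surjects onto $TX$, so $\mathrm{Im}\,dG_{(z,a)}+T_{(f(z),a)}\mathcal{V}=T_{(f(z),a)}(X\times Z)$ whether or not $f'(z)=0$. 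Hence $\mathcal{W}$ is a (possibly empty) complex submanifold of $\C\times Z$ of codimension $s$, so $\dim_{\C}\mathcal{W}=\dim Z+1-s\le\dim Z-1$ since $s\ge 2$. Writing $\pi_2\colon\mathcal{W}\to Z$ for the restriction of the projection, its fibre $\mathcal{W}_a:=\pi_2^{-1}(a)$ is exactly $\{z\in\C:f(z)\in\mathcal{V}_a\}$; and for each fixed $z_0$ the set $\{a:f(z_0)\in\mathcal{V}_a\}=p_{1,\mathcal{V}}^{-1}(f(z_0))$ is a proper analytic subset of $Z$ (of codimension $s$, by submersivity). Collecting $p_{1,\mathcal{V}}^{-1}(f(z_0))$ over a countable dense set of $z_0\in\C$ gives a first countable union $\mathcal{E}_0$ of proper analytic subsets outside of which $\mathcal{W}_a$ is a genuine discrete (though possibly infinite) subset of $\C$ and $N_f(r,\mathcal{V}_a)$ is its ideal-theoretic counting function. (Surjectivity of $p_{2,\mathcal{V}}$ is used only to make the statement non-vacuous, i.e.\ $\mathcal{V}_a\ne\emptyset$.)

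The core estimate is the vanishing of the averaged counting function. The conclusion is local on $Z$, so fix a coordinate ball $Z_0\Subset Z$ and a smooth probability measure $d\mu$ on $\overline{Z_0}$; I claim that $\int_{Z_0}N_f(r,\mathcal{V}_a)\,d\mu(a)=0$ for every $r>1$. Indeed, for $t>1$ one has $n_f(t,\mathcal{V}_a)=0$ for every $a\notin\pi_2\bigl(\mathcal{W}\cap(\overline{\D_t}\times Z_0)\bigr)$, and $\mathcal{W}\cap(\overline{\D_t}\times Z_0)$ is a relatively compact subset of the manifold $\mathcal{W}$, of Hausdorff dimension at most $2\dim Z-2$, so its image under the smooth (locally Lipschitz) map $\pi_2$ is Lebesgue-negligible, hence $\mu$-negligible; thus $n_f(t,\mathcal{V}_a)=0$ for $\mu$-a.e.\ $a$ and every $t$, and integrating in $t$ (Tonelli) yields the claim. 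In particular $N_f(\cdot,\mathcal{V}_a)\equiv0$ for $\mu$-a.e.\ $a$, so the entire content of the theorem lies in the nature of the exceptional set. To pass to the asserted $\liminf$, integrate the nonnegative function $\tfrac{N_f(r,\mathcal{V}_a)}{r\log r\,T_f(r,\omega)}$ over $r\in(e,\infty)$ and then over $a\in Z_0$: by Tonelli and the vanishing just proved, the total integral is $0$, so for $\mu$-a.e.\ $a$ we get $\int_e^\infty \tfrac{N_f(r,\mathcal{V}_a)}{r\log r\,T_f(r,\omega)}\,dr=0$, and since $T_f(r,\omega)\to\infty$ (as $f$ is transcendental) while $\int_e^\infty dr/(r\log r)=\infty$, this forces $\liminf_{r\to\infty}N_f(r,\mathcal{V}_a)/T_f(r,\omega)=0$ for $\mu$-a.e.\ $a\in Z_0$.

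What is left — and this is where the real work lies — is to replace ``$\mu$-a.e.\ $a$'' by ``$a$ outside a countable union of proper analytic subsets''. For this I would decompose $\mathcal{W}=\bigcup_j\mathcal{W}_j$ into its at most countably many irreducible components. Whenever $\pi_2|_{\mathcal{W}_j}$ is proper, $\pi_2(\mathcal{W}_j)$ is, by Remmert's proper mapping theorem, an analytic subset of $Z$ of dimension $\le\dim Z-1$, hence proper; adjoining all these together with $\mathcal{E}_0$ and the (proper analytic) loci $\{a:\C\times\{a\}\subset\mathcal{W}_j\}$ produces the candidate exceptional set $\mathcal{E}$. For $a\notin\mathcal{E}$, if $N_f(r,\mathcal{V}_a)$ stays bounded there is nothing to prove because $T_f(r,\omega)\to\infty$; the only remaining case is that the intersection points of $f$ with $\mathcal{V}_a$ escape to $\infty$ in $\C$ and are produced solely by components over which $\pi_2$ is not proper. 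Dealing with those parameters is the main obstacle: a non-proper $\pi_2|_{\mathcal{W}_j}$ has no Remmert image to offer, and its closure at infinity inside $\P^1\times Z$ need not be analytic (Remmert--Stein does not apply, since $\dim\mathcal{W}_j<\dim Z$). I expect the clean resolution to run through the currents side — which is presumably why tangent and density currents figure among the keywords: one shows that $\liminf_{r\to\infty}N_f(r,\mathcal{V}_a)/T_f(r,\omega)=0$ as soon as every Nevanlinna current of $f$ has vanishing Dinh--Sibony density along $\mathcal{V}_a$, and the vanishing of that density for all $a$ outside a countable union of proper analytic subsets of $Z$ follows by intersecting such a current with $\mathcal{V}\subset X\times Z$ and using the dimension inequality $\dim\mathcal{V}<\dim(X\times Z)-1$ — which is precisely the codimension hypothesis $s\ge 2$ — together with the submersivity of $p_{1,\mathcal{V}}$. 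Granting the currents input, the rest (the transversality of $G$, the vanishing of the averaged counting function, the calculus lemma) is bookkeeping.
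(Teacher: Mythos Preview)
Your dimension count is correct and actually gives something sharper than you state: since $\dim_{\C}\mathcal{W}\le\dim Z-1$, the projection $\pi_2(\mathcal{W})$ is Lebesgue-null, so for a.e.\ $a$ the curve $f$ misses $\mathcal{V}_a$ entirely and $N_f(\cdot,\mathcal{V}_a)\equiv 0$ --- the weighted-integral calculus lemma is then redundant. But, as you yourself say, this is not the theorem: $\pi_2(\mathcal{W})$ need not lie in any countable union of proper analytic subsets. Take the model case $Z=X$, $\mathcal{V}=\Delta$ the diagonal, so $\mathcal{V}_a=\{a\}$ and $s=\dim X\ge 2$; then $\pi_2(\mathcal{W})=f(\C)$, which for an algebraically non-degenerate $f$ lies in no such union. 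For the very generic $a$ that do belong to $f(\C)$ one must actually bound $N_f(r,\{a\})$ against $T_f(r,\omega)$, and nothing in your incidence/averaging argument does that. Your Remmert decomposition cannot close this gap, for exactly the reason you give.

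Your closing paragraph points in the right direction but is an abstract, not a proof, and the passage from ``$S\curlywedge[\mathcal{V}_a]=0$'' to ``$\liminf=0$'' does \emph{not} run through your averaging or calculus lemma --- those play no role whatsoever. The paper proceeds as follows: (i) fix one sequence $r_k\to\infty$ with $S_{r_k}\to S$ closed; (ii) show $(p_1^*S)\curlywedge[\mathcal{V}]=p_{1,\mathcal{V}}^*S$ and, via an associativity result for density currents together with a slicing theorem, deduce $S\curlywedge[\mathcal{V}_a]=0$ for very generic $a$ --- the analyticity of the exceptional set here comes specifically from Siu's semicontinuity theorem applied to Lelong-number super-level sets of certain pushforward currents, which is the ingredient entirely missing from your sketch; (iii) blow up $X$ along $\mathcal{V}_a$, lift $f$ to $\tilde f$, and use the First Main Theorem on the blowup together with a cohomological analysis of the strict transform of $S$ (and of the possible $\ddc$-closed residue supported on the exceptional divisor) to obtain $\lim_k N_{\tilde f}(r_k,\tilde{\mathcal{V}}_a)/T_f(r_k,\omega)=0$; (iv) check $N_{\tilde f}(r,\tilde{\mathcal{V}}_a)=N_f(r,\mathcal{V}_a)$. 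Step~(iii) is not bookkeeping. The gap you name is real, and it is the whole content of the theorem.
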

 
Observe that the fiber of $p_{2,\mathcal{V}}$ at a point $a \in Z$ is $\mathcal{V}_a$. Since $p_2$ is surjective and proper, the set of critical values of $p_2$ is a proper analytic subset of $Z$ and  for every $a$ outside this set,  we see that $\mathcal{V}_a$ is of codimension $s$ in $X$ and $\mathcal{V}$ is transverse to $X \times \{a\}.$ Hence $\mathcal{V}$ is essentially a family of analytic subsets of codimension $s\ge 2$ in $X$. The above result roughly says that for an analytic subset $V$ of codimension $2$ general enough,  then $\liminf_{r\to \infty} \frac{N_f(r,V)}{T_f(r,\omega)}=0$, \emph{i.e.,} $f$ almost misses such $V$.

A simple example to which we can apply Theorem \ref{the-codim2V} is when $X= \P^n$, $Z$ is the space of projective subspaces of codimension $s\ge 2$ of $X$ and $\mathcal{V}$ is the family of subspaces of codimension $s$ of $\P^n$ which is viewed as a submanifold of $\P^n \times Z$.  Even in this situation, it seems that the conclusion of the above theorem is still new. Note that when $X$ is an abelian or semi-abelian variety, by the results of Noguchi-Winkelmann-Yamanoi \cite{NWY2,NWY,Yamanoi}, one has $\liminf_{r\to \infty} \frac{N_f(r,V)}{T_f(r,\omega)}=0$ for every analytic set $V$ of codimension $\ge 2$ in $X$. 

We would like to make some comments about our strategy to prove main results.  The proof of Theorem \ref{the-codim2V} consists of two main steps. In the first step,  we look at a Nevanlinna's $d$-closed current $S$ of bidimension $(1,1)$ associated to $f$ and study the intersection of $S$ with $\mathcal{V}_a$. Since $S$ is of bi-dimension $(1,1)$ and $\mathcal{V}_a$ is of codimension at least $2$ in $X$,  one is tempted to expect that the formal intersection $``S \wedge \mathcal{V}_a"$ should be zero. Here, the crucial point is that one needs to interpret this intersection in a proper sense. In our proof, this will be done by using the theory of density currents coined recently by Dinh-Sibony in \cite{Dinh_Sibony_density}. Roughly speaking, we will prove that for very generic $a \in Z$ (\emph{i.e,} for $a$ outside a countable union of proper analytic subsets of $Z$), modulo natural identifications, one has 
\begin{align}\label{eq-ideathe1.3}
``S \wedge [\mathcal{V}_a]"= `` p_1^* S \wedge [\mathcal{V}] \wedge [X \times \{a\}]" =``\big(p_1^* S \wedge [\mathcal{V}]\big) \wedge [X \times \{a\}]"=0,
\end{align}
where for an analytic set $V$, we denote by $[V]$ the current of integration along $V$.  Here the intersection of currents in the last equalities will be understood in the sense of density currents. 

 The first equality of (\ref{eq-ideathe1.3}) follows from  the fact that $\mathcal{V}_a$ is naturally identified with  $\mathcal{V} \cap (X \times \{a\})$. The second one comes from a general fact about the associativity of density currents (see Theorem \ref{the-densitybangkhong}). Intuitively, the last equality of  (\ref{eq-ideathe1.3})  is true due to the fact that the bi-degree of the current $``\big(p_1^* S \wedge [\mathcal{V}]\big) \wedge [X \times \{a\}]"$ is strictly greater than the dimension of the ambient space $X \times Z$, thanks to the assumption that $\mathcal{V}$ is of codimension at least $2$. Nevertheless, to make the last argument rigorously, we need to prove  a result concerning the intersection of a current with generic fibers of a submersion which is of independent interest (see Theorem \ref{th-slice-density}).

In the second step, we will finish the proof by  relating  the current $``S \wedge [\mathcal{V}_a]"$ with the right-hand side of  (\ref{eq-the-codim2}). This will be done by considering  the blowup of $X$ along $\mathcal{V}_a$. Note that in this step,  we need to work with $\ddc$-closed (not necessarily $d$-closed) Nevanlinna's current associated to the lift of $f$ to the last blowup.

The proof of Theorem \ref{thedefect0} goes first by lifting $f$, $D$ to a curve $\widehat f$ and an analytic subset $\widehat D$ of codimension $2$ in the projectivisation $\P(\Ta X)$ of the tangent bundle $\Ta X$ of $X$.  The intersection of $\widehat f$ and $\widehat D$ encodes the points where $f$ is tangent to $D$. This is an idea inspired by \cite{DS_henon,D_suite}. The characteristic function of $\widehat f$ and that of $f$ are comparable by the lemma on logarithmic derivative or McQuillan's tautological inequality.  This fact permits us to use the characteristic function of $\widehat f$ instead of that of $f$. 

When $D$ runs over the set  $\P(E)$  of divisors, $\widehat D$ forms a smooth family $\mathcal{V}$ of analytic  subsets of codimension $2$ in $\P(\Ta X)$. The last family can be seen as a smooth submanifold  of codimension $2$ in $\P(\Ta X) \times \P(E)$.  This allows us to use Theorem \ref{the-codim2V} for $\widehat f$ and $\mathcal{V}$ to obtain $``\widehat S \wedge [\widehat D]" =0$ for very generic $\widehat D$. This is a key in our proof. This property together with some direct computations on the blowup of $\P(\Ta X)$ along $\widehat D$ gives the desired equality.  
  
The paper is organized as follows. In Section \ref{sec-slicing}, we present preparatory results about density currents which will be needed for our proofs of main theorems. We prove  Theorems  \ref{the-codim2V} and  \ref{thedefect0}  in Sections \ref{sec-trunc1} and \ref{sec-thjedefecto} respectively.
\\

\noindent
\textbf{Acknowledgments.}  We would like to thank Tien-Cuong Dinh, Si Duc Quang and Song-Yan Xie for fruitful discussions.  We also want to express our gratitude to the referee for comments which improved the presentation of the paper. D. T. Huynh is grateful to the Max Planck Institute for Mathematics in Bonn for its hospitality and financial support. He also wants to acknowledge the support from Hue University.  D.-V. Vu  is supported by a postdoctoral fellowship of  the Alexander von Humboldt Foundation.

\section{Density currents} \label{sec-slicing}

In this section, we will prove several results concerning density currents which are crucial for our proofs of  main results.  Firstly, we need to recall some known facts about the density currents proved in  \cite{Dinh_Sibony_density},  see also \cite{Vu_density-nonkahler} for some simplifications.  

Let $Y$ be a compact K\"ahler manifold.  Let $V$ be  a smooth submanifold of dimension $\ell$ of $Y$. Let $[V]$ be the current of integration along $V$.   Let $T$ be a positive closed current of bi-degree $(p,p)$ in $Y$. Assume that $T$ has no mass on $V$. We denote by $\{T\}, \{V\}$ the cohomology class of $T, [V]$ respectively. Denote by $\pi: E\to V$ the normal bundle of $V$ in $Y$ and $\overline E:= \P(E \oplus \C)$ the projective compactification of $E$. The hypersurface at infinity $H_\infty: = \overline E \backslash E$ of $\overline E$  is naturally isomorphic to $\P(E)$ as fiber bundles over $V$. We also have a canonical projection $\pi_\infty: \overline E \backslash V  \to H_\infty $. 

%Let $U$ be an open subset of $X$ with $U \cap V \not = \varnothing.$  Let $\tau$ be  a smooth diffeomorphism  from $U$ to an open neighborhood of $V\cap U$ in $E$ which is identity on $V\cap U$ such that  the restriction of its differential $d\tau$ to $E|_{V \cap U}$ is identity.  Such a map is called \emph{an admissible map}. 

A smooth diffeomorphism $\tau$ from an open subset $U$ of $Y$  to an open neighborhood of $V$ in $E$ is called \emph{admissible} if  $\tau$ is the identity map on $V\cap U$  and  the restriction of its differential $\d\tau$ to $E|_{V\cap U}$ is the identity map.   By \cite[Le. 4.2]{Dinh_Sibony_density}, there exists an admissible map $\tau: U \to E$ such that $U$ is  a  tubular neighborhood  of $V$. In general, such a $\tau$ is not holomorphic. 

For $\lambda \in \C^*,$ let $A_\lambda: E \to E$ be the multiplication by $\lambda$ on fibers of $E$. Let $\tau$ be an admissible map defining on a tubular neighborhood of $V$. By \cite{Dinh_Sibony_density}, the family of closed currents $(A_\lambda)_* \tau_* T$ has a uniformly bounded mass on compact sets of $E$. Any limit current of this family in $E$ is called \emph{a tangent current to $T$ along $V$}.  Such a  current is a positive closed current  invariant by $A_\lambda$ and can be extended to a current in $\overline E$. Tangent currents are independent of $\tau$, this means that  if $R:= \lim_{n\to \infty}(A_{\lambda_n})_* \tau_* T$ is a tangent current, then for every admissible map $\tau': U' \to E$, we also have $R= \lim_{n\to \infty} (A_{\lambda_n})_* \tau'_* T$ on $\pi^{-1}(U'\cap V)$. This property allows us crucial flexibility in choosing   admissible maps when we need to estimate tangent currents in practice.  
Thus if we work locally, then after trivializing $E$,  the admissible map in the definition of tangent currents  can  be chosen to be the identity. 

 In general, the tangent currents to $T$ along $V$ are not unique but their cohomology classes in $\overline E$ are unique.  That unique class is denote by $\kappa^V(T)$ and called \emph{the total tangent class} to $T$ along $V$.   Let $h_{\overline E}$ be the Chern class  of  the dual of the tautological line bundle of $\overline E$ respectively. We can write $\kappa^V(T)$ uniquely as 
\begin{align} \label{eq_kappaVT}
\kappa^V(T)=\sum_{j=\max \{0, \ell-p\}}^{\min\{\ell, k-p-1\}} \pi^* \kappa_j \wedge h_{\overline E}^{p-(\ell-j)},
\end{align} 
where $\kappa_j$ is a cohomology class in $H^{\ell-j,\ell-j}(V)$, which is called the $j^{th}$ component of $\kappa^V(T)$.

Let $T_1, \ldots, T_m$ be positive closed currents on $Y$. Let $T_1 \otimes \cdots \otimes T_m$ be the tensor current of $T_1, \ldots, T_m$ on $Y^m$.  A \emph{density current} associated to $T_1, \ldots, T_m$ is a tangent current of $T_1 \otimes \cdots \otimes T_m$ along the diagonal $\Delta_m:= \{(y,\ldots,y): y\in Y\}$ of $Y^m$. If there is a unique density current $T_\infty$ to $T_1, \ldots, T_m$  and  $T_\infty= \pi_m^* S$ for some positive closed current $S$ on $\Delta_m$, where $\pi_m:E_{\Delta_m}\rightarrow \Delta_m$ is the projection from the normal bundle $E_{\Delta_m}$, then we say that the \emph{Dinh-Sibony product} $T_1 \curlywedge \cdots \curlywedge T_m$ is well-defined  and put $T_1 \curlywedge \cdots \curlywedge T_m:= S$.

Consider a particular case where  $T_1:= T$ and $T_2:= [V]$. Observing that we have  natural identifications $\Ta (Y^2) \approx \Ta Y \times \Ta Y$ between vector bundles, where $\Ta Y$ is the tangent bundle of $Y$ and $\Delta \approx Y$.  Since $V \subset Y\approx \Delta$, there is a canonical inclusion $\imath$ from  $\Ta V$ to $(\Ta Y \times \{0\})|_\Delta$ which is a subbundle of $\Ta (Y^2)|_{\Delta}$. Let $F$  be the image of $\imath(\Ta V)$ in the normal bundle $E_\Delta= \Ta (Y^2)/ \Ta \Delta$. Put $\Delta_V:= \{(y,y) \in Y^2:y \in V\}$.  Let $E_{\Delta, V}$ be the restriction of $E_{\Delta}$ to $\Delta_V$.  Observing that $F$  is a subbundle of $E_{\Delta,V}$ of rank $\ell$ and the natural map 
$$\Psi: E_{\Delta, V}/F \to E=\Ta Y/ \Ta V$$
 is an isomorphism. Let $p_V: E_{\Delta,V} \to E_{\Delta,V}/F $ be the natural projection. The following result tells us  that a density current associated to $T$, $[V]$ corresponds naturally to a tangent current of $T$ along $V$. 

\begin{lemma} (\cite[Le. 5.4]{Dinh_Sibony_density} or \cite[Le. 2.3]{Vu_density-nonkahler}) \label{le_deltatangecurr}  If $T_\infty$ is  a tangent current of $T$ along $V,$ then the current $p_V^* \Psi^* T_\infty$ is a tangent current of $T \otimes [V]$ along $\Delta$.  Conversely, every tangent current of $T\otimes [V]$ along $\Delta$ can be written as $p_V^* \Psi^* T_\infty$ for some tangent current $T_\infty$ of $T$ along $V$. 
\end{lemma}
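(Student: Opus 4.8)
The statement is local along $\Delta_V$, and the plan is to reduce it to one explicit coordinate computation. First I would observe that every tangent current of $T\otimes[V]$ along $\Delta$ is carried by $E_{\Delta,V}$: since $\supp(T\otimes[V])\subset\supp T\times V$ and $V$ is closed, this support is empty near any point $(y_0,y_0)$ of $\Delta$ with $y_0\notin V$, so after applying an admissible map for $\Delta$ (equal to the identity near $\Delta$) and dilating, the limit lives over $\Delta_V$. (That $T\otimes[V]$ has no mass on $\Delta$ — needed for its tangent currents along $\Delta$ to be defined — follows readily from $T$ having no mass on $V$.) By the independence of tangent currents from the choice of admissible map recalled above, it then suffices to fix a point $y_0\in V$ together with holomorphic coordinates $(z,w)$ on a chart centered at $y_0$, with $z=(z_1,\dots,z_\ell)$, $w=(w_1,\dots,w_d)$ where $d=\dim_{\C}Y-\ell$, and $V=\{w=0\}$, to carry out the computation in this chart, and to note that the outcome patches because $F$, $p_V$ and $\Psi$ are intrinsic bundle maps.

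Working in the induced chart $\big((z,w),(z',w')\big)$ of $Y^2$, I would make the linear change of variables $u:=z'-z$, $v:=w'-w$, so that $\Delta=\{u=v=0\}$, $\Delta_V=\{u=v=w=0\}$, and — this being at the same time a trivialization of the normal bundles over the chart — the coordinate maps $\tau:=\id$ on $E\cong V\times\C^d_w$ and $\tau_2:=\id$ on $E_\Delta\cong\{(z,w)\}\times\C^{\ell+d}_{(u,v)}$ are admissible for $V\subset Y$ and $\Delta\subset Y^2$. Unwinding the definitions of $\imath$ and of $F$ shows that in these coordinates $F=\{(z,0)\}\times\C^\ell_u\times\{0\}$, that $p_V$ is the fibrewise projection $(z,u,v)\mapsto(z,v)$, and that $\Psi$ is the tautological identification $v\leftrightarrow w$; accordingly the dilations read $A^\Delta_\lambda:(z,w,u,v)\mapsto(z,w,\lambda u,\lambda v)$ and $A^V_\lambda:(z,w)\mapsto(z,\lambda w)$.

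The crux is then a direct computation. In these coordinates $(\tau_2)_*(T\otimes[V])$ carries $T$ in the variables $(z,w)$, leaves the $u$-direction — inherited from the free variable $z'$ along $V$ — unconstrained, and imposes the relation $v=-w$. Pushing this forward by $A^\Delta_\lambda$ leaves the free $u$-direction essentially untouched while turning $v=-w$ into $v=-\lambda w$; pairing with a test form on $E_\Delta$, integrating out the $u$-variables and substituting $v=-\lambda w$, I expect to recognize $(A^\Delta_\lambda)_*(\tau_2)_*(T\otimes[V])$ as $p_V^{\,*}\Psi^*\big((A^V_{-\lambda})_*\tau_*T\big)$ up to an error. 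The discrepancy will involve $T$ paired with a bounded form supported in a shrinking tube $\{|w|\le C/\lambda\}$ about $V$, hence will tend to $0$ precisely because $T$ has no mass on $V$. Since tangent currents are $A_{-1}$-invariant, passing from $-\lambda_k$ to $\lambda_k$ costs nothing; therefore, if $T_\infty=\lim_k (A^V_{\lambda_k})_*\tau_*T$ is a tangent current of $T$ along $V$ then $p_V^{\,*}\Psi^*T_\infty=\lim_k (A^\Delta_{\lambda_k})_*(\tau_2)_*(T\otimes[V])$ is a tangent current of $T\otimes[V]$ along $\Delta$, and conversely, given such a tangent current $R$, the uniform mass bound lets me pass to a subsequence along which $(A^V_{\lambda_k})_*\tau_*T$ converges to some $T_\infty$, whence $R=p_V^{\,*}\Psi^*T_\infty$ by the same identity. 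A short patching remark, using that $p_V$ and $\Psi$ are globally defined, finishes the proof.

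The hard part will be this computation: one must keep track of the bidegree of $T$ through the coordinate change and the two dilations, justify interchanging the pushforwards with the changes of variables — in particular the Jacobian cancellations along the $u$-direction, which are what force the free direction to contribute exactly the bundle pullback $p_V^{\,*}$ rather than a rescaled version of it — and make the vanishing of the error terms precise; this is the only place where the hypothesis that $T$ has no mass on $V$ is genuinely used. Everything else is formal once the bundles $E$, $E_\Delta$, $E_{\Delta,V}$, $F$ and the maps $p_V,\Psi$ have been written out in adapted coordinates.
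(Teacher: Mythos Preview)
The paper does not supply its own proof of this lemma; it is quoted from the cited references (Dinh--Sibony, Lemma~5.4, and Vu, Lemma~2.3). Your sketch is precisely the standard argument found there, and it also matches the template of the closely analogous computation the paper \emph{does} carry out in the proof of Lemma~\ref{le-sosanhRSvoiTRSdensity}: reduce to a chart adapted to $V$, pass to coordinates $(z,w,u,v)$ with $u=z'-z$, $v=w'-w$, observe that $[V]$ becomes integration over $\{v=-w\}$ so that the free $u$-direction produces exactly the bundle pullback $p_V^{\,*}$, and recognise $(A^\Delta_\lambda)_*(\tau_2)_*(T\otimes[V])$ as $p_V^{\,*}\Psi^*\big((A^V_{-\lambda})_*\tau_*T\big)$ up to a remainder.

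One small refinement worth flagging: your justification for the vanishing of the remainder (``$T$ paired with a bounded form supported in $\{|w|\le C/|\lambda|\}$, hence $\to 0$ since $T$ has no mass on $V$'') is slightly fragile, because the differentials $dv$ pick up factors of $\lambda$ under $(A^\Delta_\lambda)^*$ and, after restriction to $\{v=-w\}$, feed back into $dw$; so the error form need not stay uniformly bounded in $\lambda$. The cleaner route --- and the one the paper itself uses in its proof of Lemma~\ref{le-sosanhRSvoiTRSdensity} --- is to rewrite the remainder as a pairing of $(A^V_{-\lambda})_*\tau_*T$ against a test form that is $O(|\lambda|^{-1})$ (from Taylor-expanding $\tilde\Phi$ in its second slot, using $|w|\lesssim |\lambda|^{-1}$ on the support), and then invoke the uniform mass bound for $(A^V_{-\lambda})_*\tau_*T$ on compacts, which you already have at hand. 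This is a cosmetic fix; the architecture of your argument is correct.
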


We present below some more properties of tangent currents which will be used in the sequel. Let $\sigma: \widehat Y \to Y$ be the blowup along $V$ of $Y$ and $\widehat V:= \sigma^{-1}(V)$ the exceptional hypersurface. Recall that $\widehat V$ is naturally biholomorphic to $\P(E)$.  %Let $\sigma_E: \overline{\widehat E} \to \overline{E}$ the blowup along $V$ of $\overline E.$ The restriction of

Let  $\sigma_E: \widehat E \to E$ be the blowup along $V$ of $E$. The projection $\pi$ induces naturally a vector bundle projection $\pi_{\widehat E}$ from $\widehat E$ to $\sigma_E^{-1}(V)$. Let $\overline{\widehat E}$ be the projective compactification of the vector bundle $\widehat E$.  The  map $\pi_{\widehat E}$ can be extended to a projection $\pi_{\overline{\widehat E}}:\overline{\widehat E}\to \sigma_E^{-1}(V)$. We also denote by $\sigma_E$ its natural extension from $\overline{\widehat E} \to \overline{E}$.
 %Put $\pi_{\widehat E}:= \pi_{\overline{\widehat E}}|_{\widehat E}.$ 
 The vector bundle $\pi_{\overline{\widehat E}}: \widehat E\to \sigma_E^{-1}(V)$ is naturally identified with the normal bundle of $\widehat V$ in $\widehat Y$.  Hence we can identify $\sigma_E^{-1}(V)$ with $\widehat V$ and use $\widehat E$ as the normal bundle of $\widehat V$ in $\widehat Y$.  

Recall that since $Y$ is K\"ahler,  so is $\widehat Y$.  If $\codim V \ge 2,$ let $\widehat\omega_h$ be a smooth Chern form of the line bundle $\mathcal{O}(-\widehat V)$ whose restriction to each fiber of  $\widehat{V} \approx \P(E)$ is strictly positive, otherwise we simply put $\widehat \omega_h:= 0$. By rescaling $\omega$ if necessary, we can assume that  $\widehat \omega:=  \sigma^* \omega+ \widehat \omega_h>0$.

Observe that the hypersurface at infinity $\widehat H_\infty$ of  $\overline{\widehat E}$ is biholomorphic to  that of $\overline E$ via $\sigma_E$. We  use $\widehat{\pi}_\infty$ to denote the natural projection from $\overline{\widehat E}\backslash \widehat V$ to $\widehat H_\infty.$ Since the rank of $\overline{\widehat E}$ over  $\widehat V$ is $1$, we can extend $\widehat{\pi}_\infty$ to a projection from $\overline{\widehat E}$ to $\widehat H_\infty$. Thus, $\widehat V$ is naturally identified with $\widehat H_\infty$ which is in turn naturally identified with $H_\infty$.

  Let $\widehat T$ be the pull-back of $T$ on $\widehat Y  \backslash \widehat V$ by $\sigma|_{\widehat Y \backslash \widehat V}.$ The mass of $\widehat T$ is finite by \cite{DinhSibony_pullback}. We thus can extend $\widehat T$ trivially through $\widehat V$ to a current on $\widehat Y$.

For any positive current $S$ on $H_\infty$, the positive current $\pi_\infty^* S$ has a finite mass on $\overline E \backslash V$. Hence we can extend it trivially through $V$. Denote also by $\pi_\infty^* S$ this extension.  Since 
$$\pi_\infty= \widehat \pi_\infty \circ (\sigma_E|_{\widehat E \backslash \widehat V})^{-1},$$ we can check that  
$$\pi_\infty^*S= (\sigma_E)_* \circ (\widehat \pi_\infty)^*S.$$
By the last formula, the map $\pi_\infty^*$ induces natural maps on the cohomology groups and $\pi_\infty^*$ is continuous. For a cohomology class $\alpha$ in $X$, we denote by $\alpha|_V$ the restriction of  $\alpha$ to $V$. 

 Let $T_\infty$ be a tangent current to $T$ along $V$. We can check directly that there exists a tangent current   $\widehat T_\infty$    to $\widehat T$  along $\widehat V$ satisfying
\begin{align} \label{eq_TifntysigmaE}
T_\infty= (\sigma_E)_* \widehat T_\infty.
\end{align}
We have the following important property. 

\begin{proposition}\label{pro_Sinfty}(\cite{Dinh_Sibony_density}) The current $T_\infty$ is $V$-conic, \emph{i.e.,} $A_\lambda^* T_\infty= T_\infty$ for every $\lambda \in \C^*$. Equivalently,  there exists a positive closed current $S_\infty$ on $H_\infty$ such that 
\begin{align}\label{eq_TifntysigmaE22}
T_\infty= \pi^*_\infty S_\infty.
\end{align}
Moreover, we have
\begin{align} \label{eq-cohoclassoftangentcurrent}
\kappa^V(T)= \pi^*_\infty\{S_\infty\}= \pi^*_\infty(\{\widehat T_\infty\} \smile \{\widehat V\}),
\end{align}
where recall that  we identified $\widehat V$ with $\widehat H_\infty$ and $\widehat H_\infty$ with $H_\infty$.
\end{proposition}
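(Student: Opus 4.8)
\textbf{Proof proposal for Proposition \ref{pro_Sinfty}.}

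The plan is to prove the three assertions in the order stated, using the blowup picture and the relation \eqref{eq_TifntysigmaE} as the main bridge. First I would establish that $T_\infty$ is $V$-conic. The currents $(A_\lambda)_*\tau_* T$ are already $A_\mu$-invariant only in the limit, so the point is to show any limit $T_\infty = \lim_{n\to\infty}(A_{\lambda_n})_*\tau_* T$ satisfies $A_\mu^* T_\infty = T_\infty$ for all $\mu\in\C^*$. The argument is the standard one for tangent cones: for fixed $\mu$, since $A_\mu \circ A_{\lambda_n} = A_{\mu\lambda_n}$, one has $(A_\mu)_* (A_{\lambda_n})_*\tau_* T = (A_{\mu\lambda_n})_*\tau_* T$; multiplying the index by $\mu$ does not change the limit because the family $(A_\lambda)_*\tau_* T$ has locally uniformly bounded mass and, by the structure theory in \cite{Dinh_Sibony_density}, the set of limit points is the same whether one sends $\lambda_n \to 0$ or $\mu\lambda_n \to 0$ along the same directions; more precisely $(A_\mu)_* T_\infty$ is again a tangent current and a compactness/uniqueness-of-the-relevant-limit-along-rescaled-sequences argument (as in \cite[Sec. 4]{Dinh_Sibony_density}) forces it to coincide with $T_\infty$. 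Hence $T_\infty$ is invariant under the full $\C^*$-action by dilations on fibers of $E$.

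Next I would deduce the existence of $S_\infty$ on $H_\infty$ with $T_\infty = \pi_\infty^* S_\infty$. This is a structural consequence of conicity: a positive closed current on $\overline E$ invariant under all $A_\lambda$ is determined by its behavior ``at infinity'' and is the $\pi_\infty$-pullback of its trace there. Concretely, on $\overline E\setminus V$ the map $\pi_\infty$ realizes $\overline E\setminus V$ as (the total space of a line bundle minus zero section over) $H_\infty$, the $A_\lambda$-action corresponds to the natural scaling in the fibers, and an $A_\lambda$-invariant positive closed current must be a pullback $\pi_\infty^* S_\infty$ for a positive closed $S_\infty$ on $H_\infty$; one checks it extends trivially across $V$ and the identity $T_\infty=\pi_\infty^*S_\infty$ persists on all of $\overline E$ because $T_\infty$ has no mass on $V$. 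This is exactly the content recalled in \cite{Dinh_Sibony_density}, so I would cite it and only indicate the mechanism.

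For the cohomological formula \eqref{eq-cohoclassoftangentcurrent}, I would argue on the blowup. By \eqref{eq_TifntysigmaE} there is a tangent current $\widehat T_\infty$ to $\widehat T$ along $\widehat V$ with $T_\infty = (\sigma_E)_* \widehat T_\infty$. Since $\widehat V$ has codimension one in $\widehat Y$, the normal bundle $\widehat E$ has rank one over $\widehat V$, and $\overline{\widehat E}$ is a $\P^1$-bundle; the projection $\widehat\pi_\infty$ extends to all of $\overline{\widehat E}$ and identifies $\widehat H_\infty$ with $\widehat V$. In this rank-one situation the total tangent class is computed directly: for a positive closed current on a compactified line bundle, the class of a tangent current restricted to $\widehat H_\infty$ equals the class $\{\widehat T_\infty\}$ cupped with the class of the zero section, which under the identification $\widehat H_\infty \approx \widehat V$ is $\{\widehat V\}$ (the self-intersection class, i.e. $\mathcal{O}(\widehat V)|_{\widehat V} = \mathcal{O}(-1)$ on $\P(E)$). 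Pushing forward by $(\sigma_E)_*$ and using $\pi_\infty = \widehat\pi_\infty\circ(\sigma_E|_{\widehat E\setminus\widehat V})^{-1}$ together with the already-established $\pi_\infty^* = (\sigma_E)_*\circ\widehat\pi_\infty^*$ on cohomology, one gets $\kappa^V(T) = \{T_\infty\} = \pi_\infty^*\{S_\infty\} = \pi_\infty^*(\{\widehat T_\infty\}\smile\{\widehat V\})$, where the last equality is the rank-one computation transported through the identifications $\widehat V \approx \widehat H_\infty \approx H_\infty$.

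I expect the main obstacle to be making the rank-one intersection computation on $\overline{\widehat E}$ fully rigorous: one must verify that $\widehat T_\infty$ genuinely has a well-defined restriction (slice) to $\widehat H_\infty$ in the cohomological sense and that this slice represents $\{\widehat T_\infty\}\smile\{\widehat V\}$, which requires knowing $\widehat T_\infty$ puts no mass on $\widehat H_\infty$ (or, if it does, handling the $h_{\overline{\widehat E}}$-term in the expansion \eqref{eq_kappaVT} correctly). The conicity step and the $\pi_\infty^*$-pullback structure are essentially formal once one invokes \cite{Dinh_Sibony_density}; the real care is in the bookkeeping of the identifications $\widehat V \approx \widehat H_\infty \approx H_\infty$ and in checking that push-forward under $\sigma_E$ commutes with everything as claimed, which is where I would spend most of the effort.
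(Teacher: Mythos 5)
The paper does not prove this proposition; it is a quotation of results from Dinh--Sibony \cite{Dinh_Sibony_density}, as the citation preceding the statement indicates, so there is no ``paper's own proof'' to compare against. Nonetheless your attempted re-derivation has a genuine gap in the conicity step.

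You show that $(A_\mu)_* T_\infty$ is again a tangent current (correct), and then claim it must equal $T_\infty$ by a ``compactness/uniqueness-of-the-relevant-limit'' argument. But tangent currents along a submanifold are in general \emph{not} unique --- this is stated explicitly in the paragraph just above the proposition in the paper, and Dinh--Sibony exhibit examples --- so there is no uniqueness to invoke, and the phrase ``multiplying the index by $\mu$ does not change the limit'' begs the question: it is exactly the assertion to be proved. The actual proof of conicity in \cite{Dinh_Sibony_density} is a genuine analytic argument, not a formal rescaling of the sequence. Very roughly, one shows (using the K\"ahler structure and the fact that the cohomology class of a tangent current is independent of the extracted sequence) that the normalized mass of $T_\infty$ in tubes around $V$ is constant in the tube radius, and then uses the equality case of a Lelong--Jensen-type monotonicity inequality to conclude that a positive closed current with this constant-density property must be $V$-conic. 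That monotonicity mechanism is the missing idea; without it the first (and key) assertion of the proposition does not follow. The remaining two parts of your sketch --- the structural description $T_\infty=\pi_\infty^* S_\infty$ once conicity is known, and the cohomological formula via the rank-one computation on $\overline{\widehat E}$ and the identifications $\widehat V\approx\widehat H_\infty\approx H_\infty$ --- are in the right direction, though, as you yourself flag, the intersection-theoretic bookkeeping on the $\P^1$-bundle still needs to be made rigorous; this too is carried out in \cite{Dinh_Sibony_density}, which is the correct place to send the reader.
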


We are going to present the first main result of this section concerning the density currents associated to a current and slices of a given submersion.  Recall that the mass of a current $S$ of order $0$ on a manifold $\mathcal{M}$ is given by 
\[
\|S\|_{\mathcal{M}}:=\sup_{\|\Phi\|_{\cali{C}^0}\le 1} |\langle S, \Phi \rangle|,
\]
where the supremum is taken over all  smooth differential forms $\Phi$ on $\mathcal{M}$. From now on, the notations $\lesssim, \gtrsim$ are used to indicate $\le, \ge$ modulo a multiplicative constant, respectively.   

Let $W$ be a compact K\"ahler manifold and $\pi_W: Y\to W$ a holomorphic submersion. Put $Y_\theta:=\pi_W^{-1}(\theta)$ for every $\theta \in W$. Let $E_\theta$ be the normal bundle of $Y_\theta$ in $Y$ and $\pi_{\P(E_\theta)}:\P(E_\theta)\rightarrow Y_\theta$ be the natural projection. Let $m:= \dim W$ and  $\ell:= \dim Y_\theta$, which is independent of $\theta$. %The following shows that for tangent currents, being of minimal h-dimension is somehow generic. 

\begin{theorem} \label{th-slice-density} There exists a subset $\mathcal{A}$ of $W$ which is a countable union of proper analytic subsets of $W$ such that for any $\theta \notin \mathcal{A}$, the component $\kappa^{Y_\theta}_j(T)$ of the total  tangent class to $T$ along  $Y_\theta$ is zero for any $j> \ell- p$. In particular, if $p> \ell,$ then the tangent current to $T$ along $Y_\theta$ is zero, or equivalently  $T\curlywedge [Y_\theta]=0$.
\end{theorem}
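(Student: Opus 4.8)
The plan is to show that for $\theta$ outside a suitable countable union of proper analytic subsets of $W$, the tangent current of $T$ (the given positive closed current of bidegree $(p,p)$ on $Y$) along $Y_\theta$ is exactly the pull-back to $\overline{E_\theta}$ of the $\pi_W$-slice of $T$ at $\theta$; such a pull-back manifestly carries only the lowest K\"unneth component. First I would record the relevant geometry: since $\pi_W$ is a submersion, the normal bundle $E_\theta=N_{Y_\theta/Y}$ is canonically the trivial bundle $Y_\theta\times T_\theta W$, so $\overline{E_\theta}\cong Y_\theta\times\P^m$, $H_\infty\cong Y_\theta\times\P^{m-1}$, and $h_{\overline{E_\theta}}$ is the pull-back of the hyperplane class $h$ of $\P^m$; writing $\kappa^{Y_\theta}(T)=\sum_j\kappa_j\otimes h^{p-\ell+j}$ by the K\"unneth formula (with $\kappa_j\in H^{\ell-j,\ell-j}(Y_\theta)$), the assertion is precisely that $\kappa_j=0$ for all $j>\ell-p$ once $\theta$ avoids the exceptional set. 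This already contains the ``in particular'' part: if $p>\ell$ then every admissible index satisfies $j\ge 0>\ell-p$, so $\kappa^{Y_\theta}(T)=0$; since $\overline{E_\theta}$ is compact K\"ahler and a tangent current is a positive closed current on it realising this unique class, pairing with a power of a K\"ahler form forces the tangent current itself to vanish, and Lemma~\ref{le_deltatangecurr} then gives that the unique density current of $T$ and $[Y_\theta]$ is $0$, i.e.\ $T\curlywedge[Y_\theta]=0$.

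The heart of the matter is a slicing argument. By the slicing theory for currents there is a Lebesgue-negligible set $N_0\subset W$ such that, for $\theta\notin N_0$, the current $T$ has no mass on $Y_\theta$, the slice $T_\theta:=\langle T,\pi_W,\theta\rangle$ is a well-defined positive closed current on $Y_\theta$ of bidimension $(\ell-p,\ell-p)$ (the zero current when $p>\ell$), and $\int_W\|T_\theta\|\,dV(\theta)\lesssim\|T\|$. Fix such a $\theta$. By Ehresmann's theorem a neighbourhood of $Y_\theta$ in $Y$ is diffeomorphic, compatibly with $\pi_W$, to $Y_\theta\times B$ with $B\subset\C^m$ a ball; this diffeomorphism can be chosen to be an admissible map $\tau$ onto a tubular neighbourhood of $Y_\theta$ in $E_\theta=Y_\theta\times\C^m$, so that $A_\lambda$ acts only on the second factor and $(A_\lambda)_*\tau_*T$ is the normal dilation of $\tau_*T$. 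Plugging test forms into the defining identity of the slice, and using the integrability of $\theta\mapsto\|T_\theta\|$ to forbid any escape of mass to infinity, one checks that the only cluster value of $(A_\lambda)_*\tau_*T$ inside $E_\theta$ is the vertical lift $\pi_{E_\theta}^*T_\theta$. Hence the tangent current of $T$ along $Y_\theta$ is $\pi_{\overline{E_\theta}}^*T_\theta$, of class $\{T_\theta\}\otimes 1$; thus $\kappa_{\ell-p}=\{T_\theta\}$ while $\kappa_j=0$ for every $j\ne\ell-p$, in particular for all $j>\ell-p$.

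It then remains --- and this is the main obstacle --- to replace $N_0$ by a set of the required kind. For $j>\ell-p$, Proposition~\ref{pro_Sinfty} exhibits $\kappa_j(\theta)$ as a K\"unneth component of the class of the positive closed current $S_\infty$ on $H_\infty\cong Y_\theta\times\P^{m-1}$ (pushed to $Y_\theta$ after multiplying by a power of the relatively K\"ahler class), so $\kappa_j(\theta)$ is the class of a positive closed current on $Y_\theta$, and it vanishes if and only if the nonnegative number $g_j(\theta)$ obtained by pairing $\kappa_j(\theta)$ with the $j$-th power of a fixed K\"ahler class on $Y_\theta$ vanishes. It therefore suffices, for each such $j$, to show that $\{\,\theta:g_j(\theta)>0\,\}$ lies in a countable union of proper analytic subsets of $W$. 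The point is that $g_j(\theta)$ is, up to harmless additive terms, a generalised Lelong number of $T$ along the fibre $Y_\theta$ read off from the fibration $\pi_W$, so its superlevel sets $\{g_j\ge c\}$ are analytic subsets of $W$ (a relative form of Siu's semicontinuity theorem), and they are proper because $g_j$ vanishes on the full-measure set $W\setminus N_0$ by the previous step. Taking $\mathcal{A}$ to be the union, over $j>\ell-p$ and over a sequence $c\downarrow 0$, of these proper analytic subsets --- together with the countable set of $\theta$ for which some analytic component of $T$ equals $Y_\theta$ --- completes the proof; the ``in particular'' was settled above. I expect the two delicate points to be, in the slicing step, the local identification of the unique cluster value with $\pi_{E_\theta}^*T_\theta$ (one must control the normal dilations uniformly so that no mass leaks to $H_\infty$), and, above all, the relative Siu-type semicontinuity in the last step, which is where the bulk of the argument lies.
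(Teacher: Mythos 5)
Your proposal takes a genuinely different route from the paper, and while your overall intuition is sound, there is a real gap at the very point you flag as the crux. Let me explain both.

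The paper does not use slicing theory at all. Its key device is to introduce, for each $j>\ell-p$, the positive closed current
\[
T_j:=(\pi_W)_*(T\wedge\omega^j)
\]
on $W$, which is of bidegree $(s,s)$ with $s=p+j-\ell\ge 1$. One then lets $\mathcal{A}_j:=\{\theta:\nu(T_j,\theta)>0\}$, which by the \emph{ordinary} Siu semicontinuity theorem (applied to a current of bidegree $\ge(1,1)$ on $W$) is automatically a countable union of proper analytic subsets; no a.e.\ statement is needed to conclude properness. The heart of the argument is then a local estimate: working in a product chart $U_1\times U_2$ near $Y_{\theta_0}$, one shows that if some $\kappa_{j_0}$ with $j_0>\ell-p$ is nonzero, then a suitable mass of the tangent current $R$ is strictly positive, and that this mass is bounded above by a finite sum $\sum_{r'}\nu(T_{j_0+r'},\theta_0)$; this is a contradiction when $\theta_0\notin\cup_j\mathcal{A}_j$. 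In other words, the ``generalised Lelong number of $T$ along the fibre read off from the fibration'' that you gesture at is made concrete as the family of Lelong numbers $\nu(T_j,\theta)$, and the ``relative Siu theorem'' you invoke is reduced to the standard Siu theorem for the $T_j$'s together with a local mass computation. That computation is precisely what your proposal leaves unproved, and you correctly identify it as ``where the bulk of the argument lies.''

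Two further comments. First, your step~(1) (identifying the generic tangent current with the pull-back of the slice $T_\theta$) is plausible for a.e.\ $\theta$, but it is both unproven in your sketch and, more importantly, unnecessary: since the $T_j$ have bidegree $\ge(1,1)$, Siu's theorem already gives that each $\mathcal{A}_j$ is a countable union of \emph{proper} analytic subsets, so one does not need an a.e.\ vanishing to force properness. Second, your phrasing ``$\{g_j\ge c\}$ are analytic subsets of $W$'' is slightly too strong even in the paper's framework; what the argument actually delivers is an inclusion $\{g_j>0\}\subset\bigcup_{r'\ge0}\mathcal{A}_{j+r'}$, which suffices but is weaker than analyticity of the superlevel sets of $g_j$ themselves. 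So the proposal is not wrong in spirit, but it omits the construction of $T_j$ and the local estimate $g_j(\theta)\lesssim\sum_{r'}\nu(T_{j+r'},\theta)$, which together constitute the actual content of the theorem, and it substitutes for them an unnecessary slicing step and an unjustified ``relative Siu'' assertion.
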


\proof   Let $\omega$ be a K\"ahler form on $Y$.  For $j \ge  \max\{\ell-p,0\},$ the current 
$$T_j:=(\pi_W)_*(T \wedge \omega^j)$$
is positive closed of bidegree $(s,s)$ in $W$, where  $s:=p+j-\ell$. If $j>\ell-p$, then $s\ge 1$. Let $\mathcal{A}_j$ be the set of $\theta \in W$ such that $\nu(T_j, \theta)>0$. Observing that $\mathcal{A}_j$ is a countable union of proper analytic subsets of $W$ by Siu's semi-continuity theorem if $j>\ell-p$. Set
\[
\mathcal{A}:=\bigcup_{j \ge  \max\{\ell - p+1,0\}} \mathcal{A}_j.
\]
Let $\theta_0 \not \in \mathcal{A}.$  Thus for any $j\geq \ell-p+1$, one has $\nu(T_j, \theta_0)=0.$ 

Consider a tangent current $R$ to $T$ along $Y_{\theta_0},$ \emph{i.e.,}  $R=\lim_{n\to \infty}(A_{\lambda_n})_* \tau_* T,$ where $\tau$ is an admissible map from a tubular neighborhood of $Y_{\theta_0}$ to $E_{\theta_0}$. Let $j_0$ be the maximal $j$ such that $\kappa^{Y_\theta}_j(T) \not =0$. Note that by a bi-degree reason, we have $j_0 \ge \max\{0, \ell-p\}$. 

 Suppose that $j_0>\ell-p$ because otherwise we have nothing to prove.  Recall that $\kappa^{Y_{\theta_0}}_j(T)$ is a class in $H^{\ell-j,\ell-j}(Y_{\theta_0})$.  It follows that the mass of the positive closed current  $R_{j_0}:=R \wedge \pi_{\P(E_\theta)}^* \omega_{\theta_0}^{j_0}$ is strictly positive, where  $\omega_\theta$  is the restriction of $\omega$ to $Y_\theta$. Moreover since $R$ is $Y_\theta$-conic, so is $R_{j_0}.$ Thus, the mass of  $R_{j_0}$ on $\overline E_\theta$ is bounded by a constant times the mass of  $R_{j_0}$ on every open tubular neighborhood of $Y_{\theta_0}$, see \cite[Le. 3.16]{Dinh_Sibony_density}. Hence there exists a local chart $U$ on $Y$ such that 
\begin{align} \label{eq-tangenthdimensionjT}
\|R_{j_0}\|_U>0,
\end{align}
where we identified $U$ with a local chart of $E_{\theta_0}$.

From now on we work locally on $U$ and without loss of generality, we can assume that  $U= U_1 \times U_2$ of $Y$ and $(x,\theta)$ the coordinate system on $U$ such that $Y_{\theta_0}\cap U= \{\theta=0\}$. Hence $U_2$ is a local chart of $W$ centered at $\theta_0$ and $x$ is a local coordinate system on $Y_\theta$. We can identify $E_\theta$ with $U_1 \times \C^{m}$ and $A_\lambda$ is given by the multiplication $(x, \theta) \mapsto (x,\lambda \theta)$. Denote by $A'_{\lambda}$ the multiplication $\theta \mapsto \lambda \theta$ in $\C^{m}$, one sees that $A'_{\lambda} \circ \pi_W= \pi_W \circ A_{\lambda}$. 

The natural projection $\pi_{U_1}:U_1 \times U_2\rightarrow U_1$ is the restriction of $\pi_{\P(E_\theta)}$ to $\pi_{\P(E_\theta)}^{-1}(U_1)$. 
Denote by $\id_U$ the identity map on $U$. Observe that $\id_U$ is an admissible map by our identification of $E_\theta$ with $U_1 \times \C^{m}$. On $U_1 \times \C^{m}$,  we get
\begin{align}\label{eq-RtauphayU}
R=\lim_{n\to \infty}(A_{\lambda_n})_* (\id_U)_* T=\lim_{n\to \infty}(A_{\lambda_n})_* T.
\end{align}
For each $j>\ell-p$, recall that $T_j$ is a current on $W$ of bi-degree $(s,s)$ with $s\geq 1$. We then have 
\begin{align}
\label{eq-chieupiWTomegaj_0}
\nu(T_j, \theta_0) &\gtrsim \limsup_{\lambda \to \infty} \|(A'_{\lambda})_*  T_j \|_{U_2}=\limsup_{\lambda \to \infty} \|(A'_{\lambda})_* (\pi_W)_*(T \wedge \omega^j) \|_{U_2}\notag \\
&=\limsup_{\lambda \to \infty} \|(\pi_W)_*\big((A_\lambda)_* T \wedge  \pi_{U_1}^* \omega_{\theta_0}^j\big)\|_{U_2}.
\end{align} 
 Let $\omega_{W}$ be a K\"ahler form on $W.$  Observe that   $\omega \lesssim \omega_{\theta_0}+ \omega_{W}.$ It follows that  for  $r:= \dim Y-(p+ j_0)$, one has  
$$\limsup_{\lambda \to \infty} \big \langle (A_\lambda)_* T \wedge  \pi_{U_1}^* \omega_{\theta_0}^{j_0}, \omega^r  \big \rangle \lesssim \limsup_{\lambda \to \infty}\sum_{r'=0}^r   \|(\pi_W)_*\big[ (A_\lambda)_* T \wedge  \pi_{U_1}^* \omega_{\theta_0}^{j_0+r'}\big]\|\lesssim \sum_{r'=0}^r   \nu(T_{j_0+r'}, \theta_0)=0$$
by (\ref{eq-chieupiWTomegaj_0}) and our choice of $\theta_0$.   %Combining this with  (\ref{eq-chieupiWTomegaj_0}) for every $j \ge j_0$ gives 
%\begin{align*} %\label{eq-solelongbang0}
%\lim_{\lambda \to \infty} \|(A_\lambda)_* T \wedge \pi_{U_1}^* \omega_{\theta_0}^{j_0}\|= 0
%\end{align*}
This together with (\ref{eq-RtauphayU}) and (\ref{eq-tangenthdimensionjT}) gives a contradiction. Hence $j_0 \le \ell-p$. It follows that $j_0= \ell-p$. We also deduce that if $p>\ell,$ then the tangent current $R$ is zero. This finishes the proof.
\endproof

The next two results concerns the associativity of density currents: given currents $T,R,S$ such that $T\curlywedge R$ is well-defined and $(T \curlywedge R) \curlywedge S$ is well-defined, is  $T\curlywedge R \curlywedge S$ well-defined and  equal to $(T \curlywedge R) \curlywedge S$? We are not able to answer this question in general but we can show that this is the case in some situations which are enough for our applications later.  

\begin{theorem}\label{the-densitybangkhong} Let $R_1, R_2$ be  positive closed currents of bi-degree $(p_1, p_1)$,  $(p_2,p_2)$ respectively on $Y$ such that the Dinh-Sibony product $T:= R_1 \curlywedge R_2$ is well-defined. Assume that $p:=p_1+p_2> \ell$. % and $Y= X\times W$ and $\pi_W: Y \to W$ is simply the natural projection from $X\times W$ to the second component. 
Then there exists a subset $\mathcal{A}$ of $W$ which is a countable union of proper analytic subsets of $W$ such that for any $\theta \notin \mathcal{A}$,  we have $R_1 \curlywedge R_2 \curlywedge  [Y_\theta]=0$. 
\end{theorem}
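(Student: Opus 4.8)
The plan is to reduce everything to Theorem~\ref{th-slice-density} applied to the current $T:=R_1\curlywedge R_2$, which is a positive closed current of bi-degree $(p,p)$ on $\Delta_2\cong Y$, after splitting the degeneration towards the small diagonal $\Delta_3\subset Y^3$ into two successive degenerations. Concretely, I would take for $\mathcal{A}$ the countable union of proper analytic subsets of $W$ furnished by Theorem~\ref{th-slice-density} for this $T$ (enlarged, if necessary, by the at most countably many $\theta$ for which $T$ charges $Y_\theta$, so that all products below are legitimate). Since $p>\ell$, that theorem gives, for every $\theta\notin\mathcal{A}$, that the tangent current of $T$ along $Y_\theta$ vanishes; equivalently $(R_1\curlywedge R_2)\curlywedge[Y_\theta]=T\curlywedge[Y_\theta]=0$. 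It then remains to prove the associativity statement
\[
R_1\curlywedge R_2\curlywedge[Y_\theta]=(R_1\curlywedge R_2)\curlywedge[Y_\theta],
\]
where the left-hand side is the \emph{triple} Dinh-Sibony product, defined through the current $R_1\otimes R_2\otimes[Y_\theta]$ on $Y^3$ and its tangent currents along $\Delta_3$. Such associativity fails in general, but it holds here essentially because the right-hand side vanishes, so no excess intersection can occur.

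To prove it I would factor the degeneration towards $\Delta_3$ through the partial diagonal $N:=\{(y,y,z):y,z\in Y\}\subset Y^2\times Y=Y^3$, which contains $\Delta_3$ and is canonically biholomorphic to $Y\times Y$, with $\Delta_3$ corresponding to the diagonal of $Y\times Y$. First, since $R_1\curlywedge R_2$ is well-defined, $R_1\otimes R_2$ has no mass on $\Delta_2\subset Y^2$, hence $R_1\otimes R_2\otimes[Y_\theta]$ has no mass on $N$, a fortiori none on $\Delta_3$; and because the dilations attached to $N$ act trivially on the third factor, $R_1\otimes R_2\otimes[Y_\theta]$ has a \emph{unique} tangent current along $N$, namely — identifying the normal bundle of $N$ in $Y^3$ with the pull-back to $N\cong Y\times Y$ of $\Ta Y$ from the first factor — the conic current $\pi_N^*\big(T\otimes[Y_\theta]\big)$. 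Since this first-stage tangent current is unique, every tangent current of $R_1\otimes R_2\otimes[Y_\theta]$ along $\Delta_3$ is obtained by further degenerating $\pi_N^*(T\otimes[Y_\theta])$ towards the lift of $\Delta_3$; and because $\pi_N^*(T\otimes[Y_\theta])$ is conic in the fibre directions of the normal bundle of $N$, those tangent currents are, up to the natural bundle identifications and the conic pull-back, exactly the tangent currents of $T\otimes[Y_\theta]$ along the diagonal of $Y\times Y\cong N$. By Lemma~\ref{le_deltatangecurr} the latter are precisely the currents $p_{Y_\theta}^*\Psi^*T_\infty$ with $T_\infty$ a tangent current of $T$ along $Y_\theta$, and by the choice of $\theta$ (Theorem~\ref{th-slice-density}, using $p>\ell$) every such $T_\infty$ is zero. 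Hence all tangent currents of $R_1\otimes R_2\otimes[Y_\theta]$ along $\Delta_3$ vanish, the density current associated to $R_1,R_2,[Y_\theta]$ is unique and equal to $0=\pi_3^*0$, so $R_1\curlywedge R_2\curlywedge[Y_\theta]$ is well-defined and equal to $0$. A convenient way to run this quantitatively, sidestepping the bookkeeping of the bundle identifications, is to bound the mass of $(A_\lambda)_*\tau_*(R_1\otimes R_2\otimes[Y_\theta])$ on a fixed chart, wedged with the relevant power of a Kähler form, by the corresponding iterated masses, which tend to $0$ by the two inputs above; since tangent currents are positive and closed and $\overline{E_{\Delta_3}}$ is a projective bundle over the compact Kähler manifold $\Delta_3\cong Y$, vanishing of all such masses forces the currents themselves to vanish.

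The main obstacle will be the rigorous justification of this two-stage description of the tangent currents along $\Delta_3$, i.e.\ the transitivity of tangent currents for the nested pair $\Delta_3\subset N\subset Y^3$. The two delicate points are: (i) establishing that the first-stage tangent current along $N$ is genuinely unique — this is exactly where the hypothesis that $R_1\curlywedge R_2$ is well-defined enters in an essential way, since uniqueness is what lets the second-stage degeneration see only $T\otimes[Y_\theta]$ rather than a limit depending on how the dilations $A_\lambda$ are organised; and (ii) tracking the canonical isomorphisms among the normal bundle of $\Delta_3$ in $Y^3$, the normal bundle of $N$ in $Y^3$, the normal bundle of $\Delta_3$ in $N$, and their projective compactifications, so that conic invariance is preserved at each step and Lemma~\ref{le_deltatangecurr} can be invoked at the last stage. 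I expect the cleanest route is to realise both families of dilations simultaneously in a single adapted chart and to argue via the mass estimates for conic currents of \cite{Dinh_Sibony_density}, exactly as in the proof of Theorem~\ref{th-slice-density}, so that the whole argument comes down to the mass comparison indicated above.
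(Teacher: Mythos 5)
Your plan — factor the degeneration towards $\Delta_3$ through the intermediate diagonal $N=\{(y,y,z)\}$, use uniqueness of $R_1\curlywedge R_2$ to pin down the first-stage tangent current, then conclude from the vanishing of the tangent current of $T$ along $Y_\theta$ — correctly identifies all the relevant ingredients. But the ``hence'' connecting them is exactly the associativity statement the theorem is trying to prove, and the paper explicitly warns that it does not follow formally: even when the first-stage tangent current is unique and its further degeneration vanishes, the \emph{diagonal} limit defining the tangent current along $\Delta_3$ (scaling $\tilde y'$ and $\tilde y''$ at the same rate $\lambda$) need not coincide with the \emph{iterated} limit (first $\tilde y'\to\lambda_1\tilde y'$, then $\tilde y''\to\lambda_2\tilde y''$). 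Weak convergence $R_\lambda\to\pi_E^*T$ is simply not strong enough to commute with testing against the $\lambda$-dependent forms that come from the $\tilde y''$-scaling; without a uniform quantitative control, the diagonal limit could pick up ``excess'' mass that the iterated picture never sees. Uniqueness of $R_1\curlywedge R_2$ removes the indeterminacy in the first stage but says nothing about this lack of uniformity, so flagging ``transitivity'' as the main obstacle and then invoking it is where the argument genuinely breaks.

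The paper's proof never goes through a transitivity principle and never even uses the conclusion of Theorem~\ref{th-slice-density} (that $T\curlywedge[Y_\theta]=0$). Instead it reuses the \emph{finer} information from that proof, namely the Lelong numbers $\nu(T_j,\theta_0)$ of the pushforwards $T_j=(\pi_W)_*(T\wedge\omega^j)$, and proves the direct mass bound
\[
\limsup_{\lambda\to\infty}\big\|(A_{\lambda,3})_*(R_1\otimes R_2\otimes[Y_{\theta_0}])\big\|\;\lesssim\;\sum_{j\ge 0}\nu(T_j,\theta_0).
\]
After integrating out the $Y_{\theta_0}$-direction one is left with $\langle R_\lambda,\Phi_\lambda\rangle$, where $R_\lambda\to\pi_E^*T$ but $\Phi_\lambda$ concentrates at scale $|\lambda|^{-1}$ near $\theta_0$; the key device is to compare this, for each fixed $\epsilon_0>0$, with the truncated Lelong number $\nu(\cdot,\theta_0,\epsilon_0)$, a functional that is both monotone in the scale (so one can replace $c|\lambda|^{-1}$ by $\epsilon_0$) and stable under the weak convergence $R_{\lambda,j}\to(\pi_W)_*(T\wedge(\pi_E)_*\Omega_j)$, and only then let $\epsilon_0\to0$. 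That double limit (first $\lambda\to\infty$, then $\epsilon_0\to0$) is precisely what replaces the transitivity you would like to invoke. Your closing suggestion to ``bound by iterated masses'' gestures at this, but the input you plan to feed it — the mere vanishing of the tangent current of $T$ along $Y_\theta$ — is too coarse; you would in effect have to rediscover the Lelong-number estimate to make the argument close.
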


\proof  Note that we proved in Theorem \ref{th-slice-density} that $(R_1 \curlywedge R_2) \curlywedge [Y_\theta]= T \curlywedge [Y_\theta]=0$ for $\theta$ outside a countable union of proper analytic subsets of $Z$. Our desired assertion doesn't follow directly from the last property because in general we don't know whether $R_1 \curlywedge R_2 \curlywedge [Y_\theta]$ is well-defined and  equal to $(R_1 \curlywedge R_2) \curlywedge [Y_\theta]$.

Let $\Delta_3$ be the diagonal of $Y^3$ which is the set of $(y,y,y)$ for $y\in Y$. Let $E_3$ be the normal bundle of $\Delta_3$ in $Y^3$. Let $A_{\lambda,3}$ be the  multiplication by $\lambda$ along fibers of $E_3$. Let $\Delta$ be the diagonal of $Y^2$ and $E$ the normal bundle of $\Delta$ in $Y^2$. Let $A_{\lambda}$ be the multiplication by $\lambda$ along fibers of $E$. Let $\pi_E$ be the projection from $E$ to $\Delta$. 
   
%We have $Y_\theta= X \times \{\theta\}.$  Denote by $\pi_\theta$ the natural projection from $Y \to Y_\theta.$
Let $T_j, \mathcal{A}_j$ be the currents and the set  given in the proof of Theorem \ref{th-slice-density} for $j\ge 0$. Since $p>\ell$, the set $\mathcal{A}_j$ is a countable union of proper analytic subsets of $W$.  %Recall that we have shown there that 
%\begin{align}\label{ine-lelonglimit}
%\limsup_{\lambda \to \infty} \big\|(A_\lambda)_*\big((R_1 \curlywedge R_2) \otimes [Y_\theta]\big)\big\| \le \sum_{j>l-p}\nu(T_j, \theta)
%\end{align}
 The desired assertion is a direct consequence of the following inequality:
\begin{align}\label{ine-lelonglimit2}
\limsup_{\lambda \to \infty} \|(A_{\lambda,3})_*(R_1 \otimes R_2 \otimes [Y_\theta]\| \lesssim  \sum_{j\ge 0}\nu(T_j, \theta) 
\end{align}
for $\theta \in W$. Indeed, if $\theta \not \in \mathcal{A}:= \cup_{j\ge 0} \mathcal{A}_j$, then the right-hand side of  the above inequality is zero, which implies that the density current associated to $R_1, R_2, Y_\theta$ is zero.

Let us now  prove the inequality (\ref{ine-lelonglimit2}). Let $\theta_0\in W$.  From now on, we work locally near $Y_{\theta_0}$. Let $U=U_1 \times U_2$ be a local chart near $Y_{\theta_0}$  with the coordinates $y=(x,\theta)$ such that $\theta_0=0$ and $\pi_W(x,\theta)=\theta$. We obtain induced coordinates $(y,y',y'')$ on $U^3 \subset Y^3,$ where $y'=(x',\theta'), y''=(x'',\theta'')$. Put 
$$\tilde{y}'=(\tilde{x}',\tilde{\theta}'):= y' -y, \quad \tilde{y}''=(\tilde{x}'',\tilde{\theta}''):= y''-y.$$
Then $(y,\tilde{y}',\tilde{y}'')$ are new local coordinates on $U^3$ and 
\[
\Delta_3=\{\tilde{y}'=\tilde{y}''=0\}, \quad \Delta= \{\tilde{y}'=0\}.
\]
Identify $E_3$ over $\Delta_3 \cap U^3$ with $U \times \C^{2 \dim Y}$ and $E$ with $U \times \C^{\dim Y}$. The multiplication $A_{\lambda,3}$ is given by $(y,\tilde{y}', \tilde{y}'') \mapsto (y,\lambda\tilde{y}', \lambda\tilde{y}'')$ and $A_{\lambda}$ is given by  $(y,\tilde{y}') \mapsto (y,\lambda\tilde{y}')$. Let  $\Phi(y, \tilde{y}', \tilde{y}'')$
be a positive test form with compact support. Let $\tau_3$ be the change of coordinates from $(y,y',y'')$ to $(y,\tilde{y}',\tilde{y}'')$ and $\tau$ the change of coordinates from $(y,y')$ to $(y, \tilde{y}')$.  Put
\[
Q_\lambda:= \big\langle (A_{\lambda,3})_*(\tau_3)_*(R_1 \otimes R_2 \otimes [Y_{\theta_0}]), \Phi(y, \tilde{y}', \tilde{y}'') \big\rangle,
\]
which is equal to 
\[\big \langle  \tau_*(R_1 \otimes R_2), \int_{\{x''\in Y_{\theta_0}\}} \Phi(y, \lambda \tilde{y}', \lambda(x''- x), - \lambda \theta)\big \rangle =\big \langle  \tau_*(R_1 \otimes R_2), \int_{\{\tilde{x}''\in \C^\ell\}} \Phi(y, \lambda \tilde{y}', \tilde{x}'', - \lambda \theta)\big \rangle.
\]
It follows that 
$$Q_\lambda= \big \langle R_\lambda,  \Phi_\lambda\big \rangle,$$
where
$$R_\lambda:= (A_\lambda)_* \tau_*(R_1 \otimes R_2), \quad   \Phi_\lambda(y, \tilde{y}', \theta):= \int_{\{\tilde{x}''\in \C^\ell\}} \Phi(y,\tilde{y}', \tilde{x}'', - \lambda \theta)\big \rangle.$$
%Let $A'_\lambda(\theta):= \lambda \theta$ for $\theta \in U_2 \subset W.$  
Observe that 
$$ \Phi_\lambda(y, \tilde{y}', \theta) \lesssim \sum_{j=0}^{q} \Omega_j(x, \tilde{y}') \wedge \omega_{W}^j(\lambda \theta),$$
for some positive integer $q$,  where $\omega_W$ is a K\"ahler form on $W$ and $\Omega_j$ are positive test forms with compact supports. Put $R_{\lambda,j}:= (\pi_W)_* (\pi_E)_*\big(R_\lambda \wedge  \Omega_j(x, \tilde{y}')\big)$.   This implies that 
\begin{align}\label{ine-uocluongQ;ambdaRlambda}
Q_\lambda \lesssim  \big \langle R_\lambda,  \Omega_j(x, \tilde{y}') \wedge \omega_{W}^j(\lambda \theta) \big\rangle=\big \langle R_{\lambda, j}, \omega_{W}^j(\lambda \theta) \big\rangle,
\end{align}
recall here that we identified $\Delta$ with $Y$ and the bracket is computed over $U_2$. By hypothesis that  
$$\lim_{\lambda \to \infty} R_\lambda= \pi_E^* (R_1 \curlywedge R_2)= \pi_E^* T,$$ we get 
\begin{align}\label{eq-limitRlamnbdaj}
\lim_{\lambda \to \infty}R_{\lambda,j}=   (\pi_W)_*(T \wedge   (\pi_E)_* \Omega_j) . 
\end{align}

For any positive closed current $S$ of bi-dimension $(j,j)$ on $U_2$ and   every constant $\epsilon>0$, put 
\[
\nu(S,\theta_0,\epsilon):=\epsilon^{-2j}\langle S, \bold{1}_{\{|\theta|\le \epsilon\}} \omega_W^j \rangle,
\]
where $\bold{1}_B$ denotes the characteristic function of a set $B$. Since we are working on $U_2$, we can take $\omega_W$ to be the standard K\"ahler form on $\C^{m}$. The function $\nu(S,\theta_0, \epsilon)$ decreases to the Lelong number $\nu(S, \theta_0)$ as  $\epsilon\to 0$.  Let $\epsilon_0$ be a strictly positive constant. Without loss of generality, we can assume that $U_2$ is contained in the unit ball in $\C^{m}$.  A direct computation shows that
\[
\big \langle R_{\lambda,j}, \omega_{W}^j(\lambda \theta) \big\rangle \le c\,  \nu( R_{\lambda,j},\theta_0, c |\lambda|^{-1}) \le c\,  \nu( R_{\lambda,j},\theta_0,  \epsilon_0)
\]
for $|\lambda|$ big enough and  some constant $c>0$ independent of $\lambda$. This combined with (\ref{eq-limitRlamnbdaj}) and (\ref{ine-uocluongQ;ambdaRlambda}) yields that  
$$\limsup_{\lambda \to \infty}Q_\lambda \lesssim  \sum_{j=0}^{m}\limsup_{\lambda \to \infty}  \nu( R_{\lambda,j},\theta_0, \epsilon_0) \le \sum_{j=0}^{m}   \nu\big((\pi_W)_*(T \wedge   (\pi_E)_* \Omega_j),\theta_0, 2\epsilon_0\big)$$
for every $\epsilon_0 >0$. Letting $\epsilon_0 \to 0$ in the last inequality gives 
$$\limsup_{\lambda \to \infty}Q_\lambda \lesssim \sum_{j\ge 0} \nu(T_j, \theta_0).$$
So (\ref{ine-lelonglimit2}) follows. This finishes the proof. 
\endproof

\begin{lemma} \label{le-sosanhRSvoiTRSdensity} Let $Y$ be a compact K\"ahler manifold and $T$ a positive closed current on $Y$. Let $V_1, V_2$ be two smooth complex submanifolds of $Y$. Assume that $V_1$ is transverse to $V_2$. If $T \curlywedge [V_1] \curlywedge [V_2]=0$, then $T \curlywedge [V_1 \cap V_2]=0$.
\end{lemma}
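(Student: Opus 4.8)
The plan is to exhibit the statement as a favourable instance of the associativity of density currents. Transversality of $V_1$ and $V_2$ ensures that $W:=V_1\cap V_2$ is a smooth submanifold with $\codim W=\codim V_1+\codim V_2$, that near any point of $W$ there are holomorphic coordinates $z=(z',z'',z''')$ with $V_1=\{z'=0\}$, $V_2=\{z''=0\}$, $W=\{z'=z''=0\}$, and that $[V_1]\curlywedge[V_2]=[W]$ (the Dinh--Sibony product of integration currents along transverse submanifolds being the integration current along their intersection). Hence it suffices to produce a natural bijection between the density currents of $(T,[V_1],[V_2])$ and those of $(T,[W])$; once this is done, the hypothesis that the former set reduces to $\{0\}$ forces the latter to reduce to $\{0\}$ as well, which by the very definition of the Dinh--Sibony product means $T\curlywedge[W]=0$. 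Since tangent and density currents are governed by the behaviour of the currents involved in an arbitrarily thin tubular neighbourhood of the submanifold, resp.\ diagonal, along which they are formed, this bijection may be constructed locally near an arbitrary point of $W$, in the coordinates above, and then patched.

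The bijection will be realized by a two-step degeneration. Inside $Y^3$ consider the partial diagonal $\Delta_{1,23}:=\{(y,z,z):y,z\in Y\}$; it contains the small diagonal $\Delta_3$ and is canonically biholomorphic to $Y^2$, with $\Delta_3$ corresponding to the diagonal $\Delta$. Put $G:=T\otimes[V_1]\otimes[V_2]$. The normal directions of $\Delta_{1,23}$ lie in the last two factors, where $G$ reads $[V_1]\otimes[V_2]$ while $T$ is a spectator; so $G$ has no mass on $\Delta_{1,23}$, and, combining $[V_1]\curlywedge[V_2]=[W]$ with the elementary fact that the tangent cone of a product current $A\otimes B$ along $M_A\times N$ (with $N$ a submanifold of the space carrying $B$) equals $A\otimes(\text{tangent cone of }B\text{ along }N)$, the tangent current of $G$ along $\Delta_{1,23}$ is unique and equals $\pi^*(T\otimes[W])$ — that is, it is conic and pulled back from the base $\Delta_{1,23}\cong Y^2$. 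One then invokes a transitivity property of tangent cones along nested submanifolds: since the tangent current of $G$ along the larger submanifold $\Delta_{1,23}$ is pulled back from the base, the tangent currents of $G$ along $\Delta_3$ are in canonical bijection with the tangent currents of $T\otimes[W]$ along $\Delta$, hence (Lemma~\ref{le_deltatangecurr}) with the tangent currents of $T$ along $W$. All these bijections are linear-type maps sending $0$ to $0$, so the conclusion follows as explained above.

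The only genuinely technical ingredient is the transitivity step: degenerating $G$ onto $\Delta_3$ inside $Y^3$ may be performed by first sending the normal directions of $\Delta_{1,23}$ to infinity — which, $G$ being a product there, yields $T\otimes[W]$ — and then sending the normal directions of $\Delta_3$ inside $\Delta_{1,23}$ to infinity. This is the familiar principle that a tangent cone is insensitive to the precise manner in which the normal directions are sent to infinity, and it is checked in the local coordinates above exactly as in \cite{Dinh_Sibony_density}. Alternatively, one may avoid the transitivity lemma and bound directly the mass of $(A_{\lambda,3})_*G$ near $\Delta_3$ by Lelong-type quantities attached to $T$ along $W$, in the spirit of the proof of Theorem~\ref{the-densitybangkhong}; this too yields the claim.
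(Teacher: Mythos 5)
Your approach is genuinely different from the paper's, but it has a gap at its central step.

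The paper proves the lemma by a single direct computation: in local coordinates adapted to $V_1, V_2$ it writes out $\langle (A_\lambda)_*\tau_*([V_1]\otimes[V_2]\otimes T),\Phi\rangle$, absorbs the $\lambda$-scaling in the directions tangent to $V_1$ and $V_2$ by changing variables in the integrals over those submanifolds, and is left with the action of $\tilde{A}_\lambda$ on $T$ in the normal directions of $W=V_1\cap V_2$, plus an explicit $O(|\lambda|^{-1})$ error controlled by the uniform boundedness of $(\tilde{A}_\lambda)_*T$ near $W$. This exhibits, in one stroke, the bijection between tangent currents of $[V_1]\otimes[V_2]\otimes T$ along $\Delta_3$ and tangent currents of $T$ along $W$.

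You instead want to factor the degeneration through the partial diagonal $\Delta_{1,23}\cong Y^2$, use that the intermediate tangent cone of $G=T\otimes[V_1]\otimes[V_2]$ along $\Delta_{1,23}$ is the pullback of $T\otimes[W]$, and then invoke a \emph{transitivity of tangent cones along nested submanifolds}. This is where the argument is incomplete. No such transitivity theorem is established in \cite{Dinh_Sibony_density}, and it is not a consequence of the independence of the tangent cone from the choice of admissible map: that result compares two admissible maps under the \emph{same} one-parameter family $A_\lambda$, whereas you are replacing a single simultaneous rescaling of all normal directions of $\Delta_3$ by an iterated limit (first send the $\Delta_{1,23}$-normal directions to infinity, then the remaining ones). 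Whether the iterated limit agrees with the direct one is precisely the content that would need to be proved, and your appeal to ``the familiar principle that a tangent cone is insensitive to the precise manner in which the normal directions are sent to infinity'' does not address it. You acknowledge this is the only technical point, and you say it ``is checked in the local coordinates above exactly as in \cite{Dinh_Sibony_density}'', but no local coordinates are actually set up in your proposal, so this citation carries no content. Carrying out either of your two suggested routes (proving the transitivity lemma, or bounding $(A_{\lambda,3})_*G$ by Lelong-type quantities attached to $T$ along $W$) would amount to redoing the explicit computation the paper performs from the outset; as written, neither is done, so the argument has a genuine gap at its key step. (A smaller, fixable omission: you use $[V_1]\curlywedge[V_2]=[W]$ and the compatibility of tangent cones with tensor products as known facts; these are standard but should be cited or briefly verified.)
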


\proof  Let $\Delta_3=\{(x,x,x):x\in Y\}$ be the diagonal of $Y^3$.  %Let  $R$ be a density current associated to $T, [V_1], [V_2].$ Then $R$ is a tangent current to $T \otimes [V_1] \otimes [V_2]$ along $\Delta_3$. 
Let $E_3$ be the normal bundle of $\Delta_3$ in $Y^3$.  %Let $(\lambda_n)\subset \C$ be the defining sequence of $R,$ \emph{i.e} for every admissible map $\tau: U^3 \to E_3$, we have 
%\begin{align}\label{eq-defRTV1V2M}
%(A_{\lambda_n})_* \tau_* ([V_1] \otimes [V_2]\otimes T)= R.
%\end{align}
We work locally. Let $x=(x_1,x_2,x')$ be a local coordinate system on a local chart $U= U_1 \times U_2 \times U'$ of  $Y$ such that $V_j \cap U= \{x_j=0\}$ for $j=1,2$.  We obtain induced coordinates $(x,y,z)$ on $U^3 \subset Y^3,$ where $y=(y_1, y_2, y'),$ $z=(z_1, z_2, z')$. 

Let $\ell_j= \dim V_j$ for $j=1,2$ and $k:= \dim Y$. Put 
$$\tilde{x}=(\tilde{x}_1, \tilde{x}_2,\tilde{x}'):= x -z,\quad \tilde{y}=(\tilde{y}_1, \tilde{y}_2,\tilde{y}'):= y-z.$$ 
 Observe that $(\tilde{x}, \tilde{y},z)$ are new local coordinates on $U^3$ and $\Delta_3$ is given by the equations $\tilde{x}=\tilde{y}=0$. Identify $E_3$ over $\Delta_3 \cap U^3$ with $U \times \C^{2k}$. Let $\tau$ be the identity map from $U^3 \to U \times \C^{2k}$. The fiberwise multiplication $A_\lambda$ by $\lambda$ on the normal bundle of $\Delta_3$ in $Y^3$ is given by $(\tilde{x}, \tilde{y},z) \mapsto (\lambda\tilde{x}, \lambda\tilde{y},z)$.  On the other hand, the normal bundle of $V_1 \cap V_2$ in $Y$ can be identified with $U' \times \C^{\ell_1+\ell_2-k}$ and   the fiberwise multiplication $\tilde{A}_\lambda$ by $\lambda$ on the normal bundle of $V_1 \cap V_2$ in $Y$ is given by $(x_1, x_2, x') \mapsto (\lambda x_1, \lambda x_2, x')$.

Let $\Phi(\tilde{x}, \tilde{y},z)$ be a smooth form with compact support on $U \times \C^{2k}$.  We have 
\begin{align*}
\big \langle (A_{\lambda})_* \tau_* ([V_1] \otimes [V_2]\otimes T), \Phi\big \rangle &=\big \langle [V_1] \otimes [V_2] \otimes T, \Phi\big(\lambda \tilde{x}, \lambda \tilde{y},z\big)\big \rangle \\
&= \big\langle T(z), \int_{(x_2, x') \in V_1} \int_{(y_1,y') \in V_2}\Phi(-\lambda z_1, \lambda \tilde{x}_2, \lambda \tilde{x}', \lambda \tilde{y}_1, -\lambda z_2, \lambda \tilde{y}', z) \big \rangle\\ 
&= \big \langle T(z),  \int_{\tilde{x}_2, \tilde{x}', \tilde{y}_1, \tilde{y}'} \Phi(-\lambda z_1, \tilde{x}_2, \tilde{x}',  \tilde{y}_1, -\lambda z_2, \tilde{y}', z) \big\rangle\\
&= \big \langle T(z),  \int_{\tilde{x}_2, \tilde{x}', \tilde{y}_1, \tilde{y}'} \Phi(-\lambda z_1, \tilde{x}_2, \tilde{x}',  \tilde{y}_1, -\lambda z_2, \tilde{y}', 0,0,z') \big\rangle+ O(|\lambda|^{-1}),
\end{align*}
where $O(|\lambda|^{-1})$ is a current of mass $\lesssim |\lambda|^{-1}$ as $\lambda \to \infty$ because $(\tilde{A}_\lambda)_* T$ is of uniformly bounded mass on compact subsets of $U' \times\C^{\ell_1+\ell_2-k}$. 

Letting $\lambda \to \infty$ in the last equality,  we get a density current associated to $[V_1], [V_2],T$ in the left-hand side and a tangent current to $T$ along $V_1 \cap V_2$ in the right-hand side. The desired assertion thus follows. 
\endproof

In the above proof, we actually proved that there is a natural  1-1 correspondence between the set of tangent currents to $T$ along $V_1 \cap V_2$ and the set of density currents associated to  $T, [V_1],  [V_2]$. But the conclusion of Lemma \ref{le-sosanhRSvoiTRSdensity} is enough for our purpose later.

%We would like to mention here a special situation to which we will apply the last result. Let $R_1, R_2$ be two closed positive currents on $Y.$ Assume that $R_1$ is locally the intersection of $(1,1)$-current in the classical sense. ????? 

\section{Proof of Theorem  \ref{the-codim2V}} \label{sec-trunc1}

For every submanifold $Z$ of a manifold $Y$ with smooth boundary, denote by $[Z]$ the current of integration along $Z$. Let $\mu_r$ be the Haar measure on the boundary $\partial \D_r$ of $\D_r$.  Direct computations show that 
\begin{align}\label{eq-ddcDelta}
\ddc \int_1^r \frac{\d t}{t} [\D_t]= \mu_r - \mu_1,
\end{align}
recall $\dc:= \frac{i}{2\pi}(\bar \partial - \partial)$ and $\d \dc=\frac{i}{2\pi}\partial\bar\partial$. 

%Consider the intersection $f(\Delta_r) \cap D.$ We are interested in the distribution of  $f(\Delta_r) \cap D$ as $r\to \infty.$  First we need to define the last intersection properly. Let $[\Delta_r]$ be the current of integration along $\Delta_r.$ Let $f^* [D]$ be the pull-back of $[D]$ by $f.$ Define 
%$$f_*[\Delta_r] \wedge [D]:= f_*( \bold{1}_{\Delta_r} f^* [D] )$$
%which is a positive current whose support is equal to the point-set intersection $f(\Delta_r) \cap D.$ Clearly, our above definition agrees with the usual definition when $[D]$ is replaced by a smooth form.  
Let $X,Z,\mathcal{V},p_{1, \mathcal{V}},\omega,f$ be  as in the statement of Theorem \ref{the-codim2V}.  Put 
$$S_r:= c_r^{-1}\int_1^r \frac{\d t}{t} f_*[\D_t],$$
where $c_r:= T_f(r, \omega).$ 
We have the following equality
\begin{align} \label{eq-firstmain}
\dfrac{N_f(r, D)}{ T_f(r,\omega)}
=
1+
 \langle \varphi_D, \ddc S_r \rangle, 
\end{align}
where $[D]:= \ddc \varphi_D+ \omega$, which is known as the First Main Theorem in Nevanlinna's theory. Using (\ref{eq-ddcDelta}),  we get
\begin{align*}
\ddc S_r= c_r^{-1} f_* \mu_r - c_r^{-1} f_* \mu_1
\end{align*}
which implies that 
\begin{align}\label{ine-massddcSr}
\|\ddc S_r\|\lesssim c_r^{-1}.
\end{align}
 It follows that every limit current of the family $(S_r)_{r\in \R^+}$ as $r \to \infty$ is $\ddc$-closed. It is a well-known fact that at least such a limit current is $d$-closed.  Let $(r_k)_{k \in \N}$ be a sequence of positive real numbers converging to $\infty$ such that $S_{r_k}$ converges to a positive closed current $S$ as $k\to \infty$.
 
Since $p_{1, \mathcal{V}}$ is a submersion, the pull-back  $p_{1,\mathcal{V}}^* S$ is a well-defined current on $\mathcal{V}$. Here is an interpretation of the last current in terms of density currents.

\begin{lemma} \label{le-TVstep1codim2}
The Dinh-Sibony product $(p_1^* S) \curlywedge [\mathcal{V}]$ of $p_1^*S$ and  $[\mathcal{V}]$ is well-defined and equal to $p_{1,\mathcal{V}}^* S$. 
\end{lemma}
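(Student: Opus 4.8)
**Proof proposal for Lemma (p\_1^\*S) ⋀ [𝒱] = p\_{1,𝒱}^\*S**

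The plan is to work locally near a point of $\mathcal{V}$ and compute the tangent current of $p_1^*S \otimes [\mathcal{V}]$ along the diagonal of $(X\times Z)^2$ directly, showing it is the pullback under the normal-bundle projection of the expected slice current $p_{1,\mathcal{V}}^*S$. First I would reduce, via Lemma~\ref{le_deltatangecurr}, to identifying the tangent currents of $p_1^*S$ along $\mathcal{V}$: the lemma says a density current of $p_1^*S\otimes[\mathcal{V}]$ along $\Delta$ has the form $p_V^*\Psi^* T_\infty$ for a tangent current $T_\infty$ of $p_1^*S$ along $\mathcal{V}$, so it suffices to show that the (unique) tangent current of $p_1^*S$ along $\mathcal{V}$ is $\pi^* (p_{1,\mathcal{V}}^*S)$, the trivial conic extension. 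Then the asserted equality of Dinh–Sibony products follows by unwinding the definitions, since "$\pi_m^*S$ for a positive closed $S$ on $\Delta$" is exactly what being well-defined requires.

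The key local computation goes as follows. Choose coordinates in which $X$ looks like $U_1\times U_1'$ with $U_1$ coordinates transverse to $\mathcal{V}$'s $X$-directions, $Z$ looks like $U_2$, and $\mathcal{V}\cap (U_1\times U_1'\times U_2) = \{x_1 = g(x_1',\theta)\}$ for a holomorphic $g$ — using that $p_{1,\mathcal{V}}$ is a submersion so $\mathcal{V}$ is locally a graph over the $(x_1',\theta)$-variables, hence its normal bundle is naturally identified with the $x_1$-directions, which are $X$-directions only. After an admissible change of coordinates straightening $\mathcal{V}$ to $\{x_1=0\}$ (this is biholomorphic here, so $p_1^*S$ transforms as a current pulled back along the straightened projection), apply the dilation $A_\lambda\colon x_1\mapsto \lambda x_1$ in the fiber directions. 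Because $p_1^*S = \pi_{U_1}^*S$ is a pullback of a current living only in the $(x_1',\theta)$-variables and $S$ is $d$-closed (this is where I use that the limit $S$ of $S_{r_k}$ is closed, noted just above the lemma), $(A_\lambda)_* \pi_{U_1}^*S$ is literally independent of $\lambda$ and equals $\pi^*(p_{1,\mathcal{V}}^*S)$ on the normal bundle. Hence the unique tangent current of $p_1^*S$ along $\mathcal{V}$ is $\pi^*(p_{1,\mathcal{V}}^*S)$; by Lemma~\ref{le_deltatangecurr} the density current of $p_1^*S\otimes[\mathcal{V}]$ along $\Delta$ is unique and is of the form $\pi_m^*(\text{positive closed current})$, so the Dinh–Sibony product is well-defined and, after the natural identification $\Delta\approx X\times Z$, equal to $p_{1,\mathcal{V}}^*S$.

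The main obstacle I expect is the bookkeeping of identifications: one must check carefully that the normal bundle $E_{\Delta,V}/F$ of Lemma~\ref{le_deltatangecurr}, the normal bundle of $\mathcal{V}$ in $X\times Z$, and the "$x_1$-fiber directions" in the local model all match up compatibly, and that pushing forward under $\sigma_E$ / the various projections $p_V$, $\Psi$ sends the explicitly computed conic current to $p_{1,\mathcal{V}}^*S$ with the right normalization (no extra multiplicities from the straightening map, which is why it is important that the coordinate change straightening a graph is biholomorphic and tangent to the identity, i.e.\ admissible). A secondary point is to confirm that $S$ has no mass on $\mathcal{V}$ and $p_1^*S$ has no mass on the diagonal, so that the tangent/density current machinery applies — the first holds because $S$ is the limit of currents supported on the (one-real-dimensional image) curve $f(\C)$ which, being transcendental, is not contained in any analytic set, while the codimension of $\mathcal{V}$ is $\ge 2$; the second follows similarly since $p_1^*S$ is still a limit of normalized currents carried by $f(\C)\times Z$. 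Once these are in place the argument is essentially the unwinding sketched above.
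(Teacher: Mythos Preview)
Your overall strategy --- reduce via Lemma~\ref{le_deltatangecurr} to computing the tangent current of $p_1^*S$ along $\mathcal{V}$, then verify in local coordinates that this tangent current is $\pi^*(p_{1,\mathcal{V}}^*S)$ --- is exactly the paper's. But the execution contains a genuine error in the choice of local model.

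You split $X$ locally as $U_1\times U_1'$, take the normal direction to $\mathcal{V}$ to be $x_1$ (an $X$-direction), and then assert that $p_1^*S$ is ``a pullback of a current living only in the $(x_1',\theta)$-variables'' and hence literally $A_\lambda$-invariant. This is false: $S$ lives on $X$, so $p_1^*S$ depends on \emph{all} $X$-coordinates $(x_1,x_1')$ and is constant only in the $Z$-coordinate $\theta$. After your straightening $y_1=x_1-g(x_1',\theta)$, the current $p_1^*S$ becomes the pullback of $S$ under $(y_1,x_1',\theta)\mapsto (y_1+g(x_1',\theta),\,x_1')$, which visibly depends on $y_1$; thus $(A_\lambda)_*p_1^*S$ is \emph{not} independent of $\lambda$, and closedness of $S$ does nothing to change that. (One could still try to pass to the limit $\lambda\to\infty$ in this expression, but that requires an extra continuity argument you have not supplied.)

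The paper sidesteps this by using the submersion hypothesis differently. Since $p_{1,\mathcal{V}}$ is a submersion, $dp_1$ maps $T_v\mathcal{V}$ onto $T_{p_1(v)}X$, which says precisely that $\mathcal{V}$ is transverse to every fiber $p_1^{-1}(x)\cong Z$. Hence one may choose coordinates $(x_1,x_2,x_3)$ near a point of $\mathcal{V}$ with $p_1(x_1,x_2,x_3)=x_1$ (so $x_1$ are genuine $X$-coordinates and $(x_2,x_3)$ parametrize the $p_1$-fiber) and \emph{simultaneously} $\mathcal{V}=\{x_3=0\}$. Now the normal direction $x_3$ lies entirely in the $Z$-factor, so $p_1^*S$ --- depending only on $x_1$ --- is honestly $A_\lambda$-invariant, and $\langle (A_\lambda)_*p_1^*S,\Phi\rangle=\langle \pi^*p_{1,\mathcal{V}}^*S,\Phi\rangle$ is a one-line computation. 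Your graph-over-$(x_1',\theta)$ model places the normal in the wrong factor; once you switch to this chart the rest of your outline (including the invocation of Lemma~\ref{le_deltatangecurr}) goes through without any limiting step, and your worries about mass on $\mathcal{V}$ also evaporate, since a pullback current has Lebesgue-absolutely-continuous trace measure in the fiber variables.
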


\proof We only need to work locally near $\mathcal{V}$. Let $U=U_1 \times U_2 \times U_3$ be a local chart of $X\times Z$ and $(x_1,x_2,x_3)$ be its coordinate system   such that $\mathcal{V}= \{x_3=0\}$, $p_1(x_1,x_2,x_3)=x_1$   and  $p_{1,\mathcal{V}}(x_1,x_2)=x_1$. Identify the normal bundle of $\mathcal{V}$ over $U$ with $U_1 \times U_2 \times \C^s$. We have $A_\lambda(x_1,x_2, x_3)= (x_1, x_2, \lambda x_3)$. Let $\Phi$ be a test function with compact support in $U$. Observe that 
\begin{align*}
\big\langle (A_\lambda)_* (p_1^* S), \Phi \big\rangle=  \big\langle S(x_1), \int_{x_2, x_3} \Phi(x_1, x_2, \lambda x_3) \big\rangle =\big\langle S(x_1), \int_{x_2, x_3} \Phi(x_1, x_2, \lambda x_3) \big\rangle
\end{align*}
which is equal to 
$$\big\langle S(x_1), \int_{x_2, x_3} \Phi(x_1, x_2,x_3) \big\rangle  = \big \langle \pi^* p_{1,\mathcal{V}}^* S, \Phi \big \rangle,$$
where $\pi$ is the natural projection from the normal bundle of $\mathcal{V}$ to $\mathcal{V}$. This finishes the proof.
\endproof

Let $X_a:= X \times \{a\}$ for $a\in Z$.

\begin{proposition}\label{prodensitycurrentSXa0codim2} There exists a countable union $\mathcal{A}$ of proper analytic subsets of $Z$ such that for $a \in Z \backslash \mathcal{A}$, we have that $\mathcal{V}_a$ is smooth and
\begin{align*}
S \curlywedge [\mathcal{V}_a]=0.
\end{align*}
\end{proposition}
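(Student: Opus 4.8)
The plan is to combine Lemma~\ref{le-TVstep1codim2} with Theorem~\ref{the-densitybangkhong} and Lemma~\ref{le-sosanhRSvoiTRSdensity}, applied in the ambient manifold $Y:= X\times Z$ with the submersion $\pi_W:= p_2: X\times Z\to Z$ (so $W=Z$, the fibers $Y_\theta$ are exactly $X_a:= X\times\{a\}$, $\ell=\dim X$) and with the two currents $R_1:= p_1^*S$ of bidegree $(1,1)$ and $R_2:= [\mathcal{V}]$ of bidegree $(s,s)$.

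First I would record that $p:= 1+s\ge 3>\dim X=\ell$ by the codimension hypothesis $s\ge 2$, and that $R_1\curlywedge R_2= p_{1,\mathcal{V}}^*S$ is well-defined by Lemma~\ref{le-TVstep1codim2}. Then Theorem~\ref{the-densitybangkhong} applies verbatim: there is a countable union $\mathcal{A}_1$ of proper analytic subsets of $Z$ such that for $a\notin\mathcal{A}_1$ the triple density current $p_1^*S\curlywedge[\mathcal{V}]\curlywedge[X_a]$ is well-defined and vanishes. Next I would enlarge $\mathcal{A}_1$ to a countable union $\mathcal{A}$ of proper analytic subsets of $Z$ by also throwing in the set of critical values of $p_{2,\mathcal{V}}$ (a proper analytic subset of $Z$ since $p_{2,\mathcal{V}}$ is surjective and proper); for $a\notin\mathcal{A}$ the fiber $\mathcal{V}_a=p_{1,\mathcal{V}}^{-1}\big(\cdot\big)$... more precisely $\mathcal{V}\cap X_a$ is a smooth submanifold of $X_a\cong X$ of codimension $s$, transverse to $X_a$ inside $X\times Z$, and is biholomorphically identified with $\mathcal{V}_a$ via $p_1$.

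The identification step is where I would be careful. Since $\mathcal{V}$ is transverse to $X_a$ for $a\notin\mathcal{A}$, Lemma~\ref{le-sosanhRSvoiTRSdensity} applied in $Y=X\times Z$ to the current $p_1^*S$ and the submanifolds $V_1:=\mathcal{V}$, $V_2:=X_a$ gives that $p_1^*S\curlywedge[\mathcal{V}\cap X_a]=0$. It then remains to transfer this vanishing from $\mathcal{V}\cap X_a\subset X\times Z$ and the current $p_1^*S$ down to $\mathcal{V}_a\subset X$ and the current $S$. This is a functoriality statement: $p_1$ restricts to a biholomorphism from a neighborhood of $\mathcal{V}\cap X_a$ in $X_a$ onto a neighborhood of $\mathcal{V}_a$ in $X$, carrying $(p_1^*S)|_{X_a}$ to $S$ and $[\mathcal{V}\cap X_a]$ to $[\mathcal{V}_a]$; since tangent/density currents are defined by a local construction (fiberwise dilation in a tubular neighborhood, via an admissible map) that commutes with biholomorphisms, the vanishing of $p_1^*S\curlywedge[\mathcal{V}\cap X_a]$ inside $X_a$ forces $S\curlywedge[\mathcal{V}_a]=0$ inside $X$. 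Strictly speaking $p_1^*S$ is a current on the total space $X\times Z$, not on $X_a$, so I would either first restrict to $X_a$ (which is harmless here because the relevant density current is computed in a neighborhood of the fiber and $p_1^*S$ is a pullback under $p_1$, constant in the $Z$-direction), or reorganize the local computation exactly as in the proof of Lemma~\ref{le-sosanhRSvoiTRSdensity} so that the $z$-coordinate along $Z$ gets frozen in the limit.

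The main obstacle I expect is precisely this last bookkeeping: making rigorous the passage $``p_1^*S\curlywedge[\mathcal{V}]\curlywedge[X_a]=0\Rightarrow S\curlywedge[\mathcal{V}_a]=0"$, i.e.\ verifying that the density-current operations are compatible with the identification $\mathcal{V}_a\cong\mathcal{V}\cap X_a$ and with discarding the trivial $Z$-direction. I anticipate this is handled by a short explicit local coordinate computation in the spirit of Lemma~\ref{le-sosanhRSvoiTRSdensity} (choosing coordinates $(x_1,x_2,x_3)$ on $X\times Z$ with $\mathcal{V}=\{x_3=0\}$, $X_a=\{x_2=a\}$, and tracking the fiberwise dilations), together with the uniqueness of cohomology classes of tangent currents and the remark following Lemma~\ref{le-sosanhRSvoiTRSdensity} on the $1$--$1$ correspondence between tangent currents and density currents. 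Everything else is a direct citation of the results already established in Section~\ref{sec-slicing}.
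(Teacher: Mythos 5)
Your overall plan is exactly the paper's proof: pass to $Y=X\times Z$ with $\pi_W=p_2$, invoke Lemma~\ref{le-TVstep1codim2} to get $p_1^*S\curlywedge[\mathcal V]=p_{1,\mathcal V}^*S$, invoke Theorem~\ref{the-densitybangkhong} to kill $p_1^*S\curlywedge[\mathcal V]\curlywedge[X_a]$ for generic $a$, use Lemma~\ref{le-sosanhRSvoiTRSdensity} together with transversality of $\mathcal V$ and $X_a$ to convert this into $p_1^*S\curlywedge[\mathcal V\cap X_a]=0$, and then identify this with $S\curlywedge[\mathcal V_a]$. Your discussion of the last identification is even a bit more explicit than the paper's, which only asserts it.

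However, the degree bookkeeping in the first step is wrong. You write $R_1=p_1^*S$ has bidegree $(1,1)$ and conclude $p=1+s\ge 3>\dim X=\ell$. That inequality is false as soon as $\dim X>3$. The current $S$ associated to the entire curve $f$ is of bi-\emph{dimension} $(1,1)$, hence of bi-\emph{degree} $(\dim X-1,\dim X-1)$; pulling back by the submersion $p_1$ preserves bi-degree, so $R_1$ has bi-degree $(\dim X-1,\dim X-1)$, not $(1,1)$. The correct count is $p=p_1+p_2=(\dim X-1)+s$, and the hypothesis $p>\ell=\dim X$ of Theorem~\ref{the-densitybangkhong} reduces precisely to $s>1$, i.e.\ to the stated assumption $s\ge 2$. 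The rest of your argument goes through unchanged once this is corrected, and the conclusion you drew from the (incorrect) inequality happens to be the right one, but as written the verification of the hypothesis of Theorem~\ref{the-densitybangkhong} does not hold.
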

\proof %By the comment after Theorem \ref{the-codim2V}, for $a$ outside a proper analytic subset of $Z,$ we have that $\mathcal{V}$ is transverse to $X_a.$
 %Theorem \ref{th-slice-density} and Lemma \ref{le-TVstep1codim2} yield that  for $a$ outside a countable union $\mathcal{A}$ of proper analytic sets of $Z,$ the Dinh-Sibony product $(S \curlywedge [\mathcal{V}]) \curlywedge [X_a]$ is zero because the bi-degree of $S \curlywedge [\mathcal{V}]$ is $\dim X-1+s> \dim X= \dim X_a.$ By Theorem \ref{the-densitybangkhong}, the density current associated to $S, [\mathcal{V}]$ and $[X_a]$ is $0$ for $a\in Z \backslash \mathcal{A}$. Using the comment right after Theorem \ref{the-codim2V}, we see that by enlarging $\mathcal{A}$ if necessary, we have that $\mathcal{V}$ is transverse to $X_a$ for $a\not \in \mathcal{A}.$  This together with  Lemma 2.6 yields
Put $\mathcal{V}'_a:= \mathcal{V} \cap X_a$. Recall $\mathcal{V}_a= p_1(\mathcal{V}'_a)$.   By Theorem \ref{the-densitybangkhong} and Lemma \ref{le-TVstep1codim2}, for $a$ outside a countable union $\mathcal{A}$ of proper analytic sets of $Z,$ the Dinh-Sibony product $(p_1^*S) \curlywedge [\mathcal{V}] \curlywedge [X_a]$ is zero because the bi-degree of $(p_1^*S) \curlywedge [\mathcal{V}]$ is $\dim X-1+s> \dim X= \dim X_a$.  Using the comment right after Theorem \ref{the-codim2V}, by enlarging $\mathcal{A}$ if necessary, one can assume that $\mathcal{V}$ is transverse to $X_a$ for $a\not \in \mathcal{A}$.  This together with  Lemma \ref{le-sosanhRSvoiTRSdensity} yields
\begin{align*}
(p_1^*S) \curlywedge [\mathcal{V}'_a]= p_1^* S \curlywedge [\mathcal{V} \cap X_a]=0
\end{align*}
for such $a$. Combining this and the fact that density currents associated to $S, [\mathcal{V}_a]$ are naturally identified with those associated to $p_1^* S$ and $[\mathcal{V}'_a]$, we obtain 
$$S \curlywedge [\mathcal{V}_a]=0$$
for $a \not  \in \mathcal{A}$.   This finishes the proof. 
\endproof

From now on, fix an $a \in Z\backslash \mathcal{A}$. For simplicity, we write $V$ for $\mathcal{V}_a$. Let $\rho: \tilde{X} \to X$ be the blowup of $X$ along $V$. Denote by $\tilde{V}$ the exceptional hypersurface of $V$. Let $\tilde{f}$ be the lift of $f$ to $\tilde{X}$, we then have $\rho \circ \tilde{f}= f$.

\begin{lemma}\label{le-uocluongTfngaomeganga}
\label{le-fngafwidehatr} There exists a  K\"ahler form $\tilde{\omega}$ on $\tilde{X}$ such that 
\[
 T_{\tilde{f}}(r,\tilde{\omega}) \le T_{f}(r, \omega)
 +
 O(1).
 \]
\end{lemma}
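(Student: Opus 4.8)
\textbf{Proof strategy for Lemma \ref{le-uocluongTfngaomeganga}.}

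The plan is to exhibit an explicit K\"ahler form on $\tilde X$ whose cohomology class is dominated by $\rho^*\{\omega\}$ modulo the exceptional divisor, and then to absorb the exceptional contribution using the fact that $\tilde f(\C)$ is not contained in $\tilde V$ (since $f(\C)\not\subset V$ because $V$ has codimension $\ge 2$, actually more carefully because $f$ is transcendental and $V$ is a proper analytic subset). Concretely, as in the discussion preceding Proposition \ref{pro_Sinfty} applied with $Y=X$, $Y$-submanifold $V=\mathcal V_a$, I would take a smooth Chern form $\omega_h$ of $\mathcal O(-\tilde V)$ on $\tilde X$ that is strictly positive along the fibers of $\tilde V\approx \P(E_{V})\to V$, and set $\tilde\omega:=\rho^*\omega+\varepsilon\,\omega_h$ for a small $\varepsilon>0$. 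Since $\rho$ is an isomorphism away from $\tilde V$ and $\rho^*\omega$ is semipositive and strictly positive in the horizontal directions while $\varepsilon\omega_h$ is strictly positive in the fiber directions of $\tilde V$, for $\varepsilon$ small enough $\tilde\omega$ is a genuine K\"ahler form on $\tilde X$ (this is exactly the construction recalled in the paragraph containing $\widehat\omega_h$ in Section \ref{sec-slicing}).

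Next I would compare the characteristic functions. Pulling back, $T_{\tilde f}(r,\tilde\omega)=\int_1^r\frac{dt}{t}\int_{\D_t}\tilde f^*(\rho^*\omega+\varepsilon\omega_h) = T_f(r,\omega)+\varepsilon\int_1^r\frac{dt}{t}\int_{\D_t}\tilde f^*\omega_h$, using $\rho\circ\tilde f=f$. The form $\omega_h$ represents $c_1(\mathcal O(-\tilde V))$, so $\tilde f^*\omega_h$ is cohomologous to $-\tilde f^*[\tilde V]$; applying the Lelong--Jensen (First Main Theorem) formula to the section of $\mathcal O(\tilde V)$ cutting out $\tilde V$, one gets $\int_1^r\frac{dt}{t}\int_{\D_t}\tilde f^*\omega_h = -N_{\tilde f}(r,\tilde V)+m_{\tilde f}(r,\tilde V)+O(1)$ up to sign conventions, and since the proximity function $m_{\tilde f}$ and the counting function $N_{\tilde f}$ of the effective divisor are both nonnegative, one needs the right inequality to come out. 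The cleaner route, which I would actually use, is: $\tilde f^*\omega_h$ need not be positive, but we only need an upper bound $T_{\tilde f}(r,\tilde\omega)\le T_f(r,\omega)+O(1)$, and by the First Main Theorem $T_{\tilde f}(r,\mathcal O(-\tilde V)) = -N_{\tilde f}(r,\tilde V)+m_{\tilde f}(r,\tilde V)-m_{\tilde f}(1,\tilde V) \le m_{\tilde f}(r,\tilde V)+O(1)$; but $\tilde V=\rho^{-1}(V)$ and $\tilde f$ stays away from $\tilde V$ precisely on the part of $\C$ where $f$ stays away from $V$, so the proximity term is controlled. To keep it robust I would instead choose $\omega_h$ and $\varepsilon$ so that the estimate becomes transparent: pick $\varepsilon$ so small that $\rho^*\omega+\varepsilon\omega_h\le \rho^*\omega'$ as forms for some K\"ahler form $\omega'$ on $X$ with $\{\omega'\}$ in the same class as $\{\omega\}$ — wait, that cannot hold near $\tilde V$. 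So the honest argument is the First Main Theorem one.

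Let me restate the step I expect to be the crux. \textbf{The main obstacle} is that $\omega_h$, the fiberwise-positive Chern form of $\mathcal O(-\tilde V)$, is \emph{not} a positive form on all of $\tilde X$, so $T_{\tilde f}(r,\varepsilon\omega_h)$ is not obviously $\le 0$ or bounded. The resolution: by the First Main Theorem (cf. \cite[Th. 2.3.31]{Noguchi}) applied to the canonical section $\sigma_{\tilde V}$ of $\mathcal O(\tilde V)$, whose zero divisor is $\tilde V$ and which is holomorphic on $\tilde X$, one has
\[
T_{\tilde f}\big(r,\mathcal O(\tilde V)\big)=N_{\tilde f}(r,\tilde V)+m_{\tilde f}(r,\tilde V)+O(1)\ge 0,
\]
where $m_{\tilde f}(r,\tilde V)=\int_{\partial\D_r}\log\frac{1}{\|\sigma_{\tilde V}\circ\tilde f\|}\,d\mu_r\ge -O(1)$ once $\|\sigma_{\tilde V}\|\le 1$ is arranged by scaling the metric. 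Hence $T_{\tilde f}(r,\mathcal O(-\tilde V))=-T_{\tilde f}(r,\mathcal O(\tilde V))+O(1)\le m_{\tilde f}(r,\tilde V)+O(1)$. The point is that $\tilde f(\C)\not\subset\tilde V$ (otherwise $f(\C)\subset V$, contradicting that $V$ is a proper analytic subset and $f$ is transcendental), so $\sigma_{\tilde V}\circ\tilde f$ is not identically zero and the proximity term is a genuine Nevanlinna function; but more is needed to bound $m_{\tilde f}(r,\tilde V)$ by $T_f(r,\omega)+O(1)$. This follows because $\|\sigma_{\tilde V}\circ\tilde f\|^2$ can be taken comparable to $\rho$-pullback of $\|s_V\|^2$ for a section $s_V$ of a line bundle on $X$ whose divisor contains $V$ (here using that $L$, or any sufficiently ample bundle, is globally generated near $V$), whence $m_{\tilde f}(r,\tilde V)\le m_f(r,\{s_V=0\})+O(1)\le T_f(r,\omega'')+O(1)$ for a suitable K\"ahler class, and after rescaling $\varepsilon$ this is absorbed into $T_f(r,\omega)+O(1)$. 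Thus choosing $\varepsilon>0$ small enough at the outset so that $\varepsilon\,[\text{the auxiliary class}]\le \{\omega\}$ and $\tilde\omega=\rho^*\omega+\varepsilon\omega_h>0$, we obtain $T_{\tilde f}(r,\tilde\omega)=T_f(r,\omega)+\varepsilon T_{\tilde f}(r,\omega_h)\le T_f(r,\omega)+O(1)$, as desired. I would phrase the final writeup around this First Main Theorem comparison, keeping the positivity/K\"ahlerness of $\tilde\omega$ as the easy preliminary step and the bound on $T_{\tilde f}(r,\omega_h)$ as the substantive one.
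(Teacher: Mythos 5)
The paper itself defers to \cite[2.5.1]{Huynh2016} rather than writing out a proof, so there is no in-text argument to compare with; but the standard proof is exactly the one you begin with: take a fiberwise-positive Chern form $\omega_h$ of $\mathcal{O}(-\tilde{V})$, set $\tilde{\omega}=\rho^*\omega+\varepsilon\omega_h$ with $\varepsilon>0$ small so that $\tilde{\omega}$ is K\"ahler, use $T_{\tilde{f}}(r,\rho^*\omega)=T_f(r,\omega)$ from $\rho\circ\tilde{f}=f$, and control $T_{\tilde{f}}(r,\omega_h)=T_{\tilde{f}}\big(r,\mathcal{O}(-\tilde{V})\big)+O(1)$ via the First Main Theorem applied to the canonical section of $\mathcal{O}(\tilde{V})$. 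Up to that point your argument is on track and the finish is one line away.

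The gap is in your final paragraph, and it is a sign/logic slip that then derails you. From $T_{\tilde{f}}\big(r,\mathcal{O}(\tilde{V})\big)=N_{\tilde{f}}(r,\tilde{V})+m_{\tilde{f}}(r,\tilde{V})+O(1)$ together with $N_{\tilde{f}}\ge 0$ and $m_{\tilde{f}}\ge 0$ (after rescaling the metric so that $\|\sigma_{\tilde{V}}\|\le 1$), you already have $T_{\tilde{f}}\big(r,\mathcal{O}(\tilde{V})\big)\ge O(1)$, hence $T_{\tilde{f}}\big(r,\mathcal{O}(-\tilde{V})\big)\le O(1)$, and therefore $T_{\tilde{f}}(r,\tilde{\omega})=T_f(r,\omega)+\varepsilon\,T_{\tilde{f}}(r,\omega_h)\le T_f(r,\omega)+O(1)$. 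You should stop there. Instead you write $T_{\tilde{f}}\big(r,\mathcal{O}(-\tilde{V})\big)\le m_{\tilde{f}}(r,\tilde{V})+O(1)$; the correct consequence is $\le -N_{\tilde{f}}(r,\tilde{V})+O(1)\le O(1)$, not $\le +m_{\tilde{f}}+O(1)$, and this phantom $m_{\tilde{f}}$ term sends you off to bound $m_{\tilde{f}}(r,\tilde{V})$ by $T_f(r,\omega'')+O(1)$ and then ``absorb it by rescaling $\varepsilon$.'' That step cannot work: $T_f(r,\omega'')\to\infty$, so $\varepsilon\,T_f(r,\omega'')$ is never $O(1)$ for any fixed $\varepsilon>0$; at best your route yields $T_{\tilde{f}}(r,\tilde{\omega})\le(1+C\varepsilon)\,T_f(r,\omega)+O(1)$, which is strictly weaker than the lemma. (The auxiliary divisor $\{s_V=0\}$ you appeal to is also a red herring, since $V$ has codimension $\ge 2$ and is not cut out by a single section, so the comparison of proximity terms you sketch is not justified.) Deleting the detour and concluding directly from $T_{\tilde{f}}\big(r,\mathcal{O}(-\tilde{V})\big)\le O(1)$ gives a correct proof.
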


\proof  See \cite[2.5.1]{Huynh2016} for a proof. 
\endproof

Let $\tilde{S}_r$ be the pull-back of $S_r$ by $\rho$. Let $\tilde{S}$ be the strict transform of $S$ by $\rho$. 
 We have 
\[
\tilde{S}_r= c_r^{-1} \int_1^r \frac{\d t}{t} \tilde{f}_*[\D_t].
\]
Using this and Lemma \ref{le-fngafwidehatr} yields that  any limit current $\tilde{S}'$ of the sequence $(\tilde{S}_{r_k})$ is a $\ddc$-closed positive current on $\tilde{X}.$ This combined with the fact that  $\tilde{S}_{r_k} \to \tilde{S}$ outside $\tilde{\mathcal{V}}$ gives 
$$\tilde{S}'= \tilde{S}+ \tilde{S}'',$$
 where $\tilde{S}''$ is a $\ddc$-closed positive current of bi-dimension $(1,1)$ supported on $\tilde{V}.$ Hence, by a support theorem of Bassanelli \cite{Bassanelli}, $\tilde{S}''$ is a $\ddc$-closed current on $\tilde{V}$.  

Note that since $X$ is K\"ahler, so  is $\tilde{X}$.  By $\ddc$-Lemma, for every closed smooth $(n-1,n-1)$-form $\xi$ on $\tilde{V}$, where $n:= \dim X$,  the quantity $\langle \tilde{S}'',\xi \rangle$ depends only on the cohomology class of $\xi$. This combined with Serre's duality shows that  the cohomology class $\{\tilde{S}''\}$ of $\tilde{S}''$ in $H^{1,1}(\tilde{V})$ is well-defined.  

Let $\eta$ be a closed form in the cohomology class of $[\tilde{V}]$.  Recall that $\tilde{V}$ is naturally isomorphic to the fiber bundle $\P(F),$ where $F$ is  the normal bundle of $V$ in $X$. Denote by $\pi_{\P(F)}: \P(F) \to V$ the natural projection. The restriction of the cohomology class of $[\tilde{V}]$ to $\tilde{V}$ is the opposite of the Chern class $\omega_{\mathcal{O}_{\P(F)}(1)}$ of the line bundle $\mathcal{O}_{\P(F)}(1)$ which is the dual of the tautological line bundle of $\P(F)$.  

\begin{lemma}  \label{le-tildeSnomass}
$$\lim_{k \to \infty} \langle \tilde{S}_{r_k}, \eta \rangle  = 0.$$
\end{lemma}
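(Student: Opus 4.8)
The statement to prove is $\lim_{k\to\infty}\langle \tilde S_{r_k},\eta\rangle = 0$, where $\eta$ is a closed form representing $[\tilde V]$ on $\tilde X$, $\tilde S_{r_k} = c_{r_k}^{-1}\int_1^{r_k}\frac{dt}{t}\tilde f_*[\D_t]$ is the lift of the Nevanlinna current, and the key input available is Proposition~\ref{prodensitycurrentSXa0codim2}, which says $S\curlywedge[V]=0$ (with $V=\mathcal V_a$).

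The plan is to translate the vanishing of the Dinh--Sibony product $S\curlywedge[V]$ into a statement on the blowup $\rho\colon\tilde X\to X$, and then compare $\langle\tilde S_{r_k},\eta\rangle$ with the mass on the exceptional divisor $\tilde V$ of the tangent-current data of $S$ along $V$. First I would recall from Proposition~\ref{pro_Sinfty} and formula~(\ref{eq-cohoclassoftangentcurrent}) that a tangent current $S_\infty$ to $S$ along $V$, viewed as $\pi_\infty^*S_\infty$ on $H_\infty\cong\P(F)\cong\tilde V$, has cohomology class governed by $\kappa^V(S) = \pi_\infty^*(\{\widehat S_\infty\}\smile\{\widehat V\})$. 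Since $S\curlywedge[V]=0$ means there is a unique density current associated to $S,[V]$ and it is zero, Lemma~\ref{le_deltatangecurr} forces the tangent current $S_\infty$ to $S$ along $V$ to be zero; hence $\kappa^V(S)=0$, and in particular the component $\kappa_{\ell-p}$ vanishes. Next, I would use the blowup picture set up after Lemma~\ref{le-TVstep1codim2}: the strict transform $\tilde S$ satisfies $\tilde S_{r_k}\to \tilde S + \tilde S''$ where $\tilde S''$ is a $\ddc$-closed positive $(1,1)$-current carried by $\tilde V$, and $\{\tilde S''\}\in H^{1,1}(\tilde V)$ is well-defined. The pairing $\langle\tilde S_{r_k},\eta\rangle$ converges to $\langle\tilde S+\tilde S'',\eta\rangle = \{\tilde S+\tilde S''\}\cdot\{\tilde V\}$; restricting to $\tilde V$ and using that $\eta|_{\tilde V}$ represents $-\omega_{\mathcal O_{\P(F)}(1)}$, this becomes an intersection number on $\tilde V\cong\P(F)$ expressible through $\kappa^V(S)$, which is zero.

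More concretely, I would argue as follows. The limit $\lim_k\langle\tilde S_{r_k},\eta\rangle$ splits: the part coming from $\tilde S$ (the strict transform, which has no mass on $\tilde V$) pairs with the cohomology class $\{\tilde V\}$ and by the projection formula equals $\langle S, \rho_*\eta\rangle$-type terms that match the mass of $S$ on $V$, which is zero by hypothesis; the part coming from $\tilde S''$ is an intersection of $\{\tilde S''\}$ with $\{\tilde V\}|_{\tilde V} = -\omega_{\mathcal O_{\P(F)}(1)}$ on the projective bundle $\tilde V = \P(F)\to V$. The point is that $\{\tilde S''\}$ is precisely (up to the identifications $\tilde V\cong \widehat V\cong H_\infty$ made in Section~\ref{sec-slicing}) the cohomology class of a tangent current to $S$ along $V$ at infinity, so $\{\tilde S''\}\smile \omega_{\mathcal O_{\P(F)}(1)}$ integrated over $\tilde V$ is read off from $\kappa^V(S)$ via~(\ref{eq-cohoclassoftangentcurrent}) and~(\ref{eq_kappaVT}); since $S\curlywedge[V]=0$ gives $\kappa^V(S)=0$ (equivalently the relevant component $\kappa_{\ell-p}$ is zero — here $\dim V = n-s$, $p=n-1$, so $\ell - p = 1-s < 0$ when $s\ge 2$, so even more is true: there is no room for a nonzero tangent current at all), this intersection number vanishes. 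Assembling the two pieces gives $\lim_k\langle\tilde S_{r_k},\eta\rangle = 0$.

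The main obstacle I expect is bookkeeping the chain of natural identifications carefully enough to be sure that $\{\tilde S''\}$ really is the class of a tangent current of $S$ along $V$ (as opposed to merely being controlled by it), and that the mass/cohomology pairing $\langle\tilde S_{r_k},\eta\rangle$ genuinely converges rather than merely having a convergent subsequence — this is why one fixes the sequence $(r_k)$ along which $\tilde S_{r_k}\to\tilde S+\tilde S''$ and works with that single limit. A secondary point requiring care is that $\tilde S''$, being only $\ddc$-closed rather than $d$-closed, still has a well-defined cohomology class against closed forms (already noted in the excerpt via the $\ddc$-Lemma and Serre duality), so the pairing with $\eta$ depends only on $\{\eta\} = \{[\tilde V]\}$; I would invoke that observation to replace $\eta$ by any convenient closed representative, in particular one pulled back from a neighborhood of $V$ adapted to the tubular-neighborhood/admissible-map constructions of Section~\ref{sec-slicing}, which is what makes the comparison with $\kappa^V(S)$ clean.
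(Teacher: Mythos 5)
The key claim in your proposal — that $\{\tilde S''\}$ is (or can be read off from) the cohomology class of a tangent current of $S$ along $V$, and hence vanishes because $\kappa^V(S)=0$ — is precisely what the paper warns \emph{cannot} be concluded: the remark right after the lemma's statement says the lemma ``is not a direct consequence of the (semi-)continuity of total tangent classes\dots because $\tilde S_r$ is not closed.'' The currents $\tilde S_{r_k}$ are only $\ddc$-closed, so the limit $\tilde S'$ need not behave like a tangent current at all, and $\tilde S'' = \tilde S' - \tilde S$ is an a priori unrelated $\ddc$-closed current carried by $\tilde V$. You flagged this identification as the ``main obstacle\dots requiring bookkeeping,'' but in fact no amount of bookkeeping closes this gap; the identification is genuinely false in general, which is why the paper avoids it.

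The paper's actual mechanism has two ingredients you omit. First, the First Main Theorem for $\tilde f$ and $\tilde V$ gives $\langle\tilde S_r,\eta\rangle \ge N_{\tilde f}(r,\tilde V)/T_f(r,\omega) - O(1)/T_f(r,\omega) \ge -O(1)/T_f(r,\omega)$, so $\liminf_k\langle\tilde S_{r_k},\eta\rangle\ge 0$. Second, from Proposition~\ref{prodensitycurrentSXa0codim2} and~(\ref{eq-cohoclassoftangentcurrent}) one gets $\{\tilde S\}\smile\eta = 0$, so the limit reduces to $\langle\tilde S'',\eta\rangle$; then the identity $\rho_*\tilde S_{r_k}=S_{r_k}$ forces $\rho_*\tilde S''=0$, hence $(\pi_{\P(F)})_*\tilde S''=0$, and the Leray decomposition of $\{\tilde S''\}$ on $\P(F)$ kills the $\kappa_1$ component, leaving $\{\tilde S''\}=\pi_{\P(F)}^*\kappa_0$ with $\kappa_0\ge 0$. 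Pairing with $\eta|_{\tilde V}=-\omega_{\mathcal O_{\P(F)}(1)}$ gives $\langle\tilde S'',\eta\rangle=-\kappa_0\le 0$. Only the squeeze between $\ge 0$ (FMT) and $\le 0$ (Leray plus $\rho_*\tilde S''=0$) yields the equality; neither piece alone gives it, and neither follows from $\kappa^V(S)=0$ controlling $\tilde S''$. Your proposal therefore has a genuine gap: it replaces the sign argument by an unjustified identification of $\{\tilde S''\}$ with tangent-class data, and it never invokes the FMT lower bound that makes the paper's sign argument conclusive.
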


We emphasize that the above lemma is not a direct consequence of the (semi-)continuity of total tangent classes given in \cite[Th. 4.11]{Dinh_Sibony_density}  because $\tilde{S}_r$ is not closed.  

\proof   By the First Main Theorem, we have
\begin{align} \label{eq-fngaDnga}
 \frac{N_{\tilde{f}}(r, \tilde{V})}{T_{f}(r, \omega)} \le  \langle \tilde{S}_r, \eta \rangle + O(1)/ T_f(r, \omega),
\end{align}
which yields
\begin{align} \label{ine-liminftidleS_r}
\liminf_{k \to \infty} \langle \tilde{S}_{r_k}, \eta \rangle \ge 0.
\end{align} 
By Proposition \ref{prodensitycurrentSXa0codim2} and (\ref{eq-cohoclassoftangentcurrent}),  we obtain that $\{\tilde{S}\} \smile \eta= 0$. To simplify the notation, we assume $\lim_{k\to \infty}\tilde{S}_{r_k}= \tilde{S}'$. Thus, 
\begin{align}\label{eqlimSreta}
\lim_{k \to \infty} \langle \tilde{S}_{r_k}, \eta \rangle= \langle \tilde{S}, \eta \rangle+  \langle \tilde{S}'', \eta \rangle= \langle \tilde{S}'', \eta \rangle= \int_{\tilde{V}} \{\tilde{S}''\} \wedge (\{\tilde{V}\}|_{\tilde{V}})= -\int_{\tilde{V}} \{\tilde{S}''\} \wedge  \omega_{\mathcal{O}_{\P(F)}(1)}.
\end{align}
On the other hand, since 
$$S+ \rho_* \tilde{S}''=  \rho_* \tilde{S}+\rho_* \tilde{S}''=  \lim_{k\to \infty}\rho_* \tilde{S}_{r_k} = \lim_{k\to \infty} S_{r_k}=S,$$
we get $\rho_* \tilde{S}''=0$. Since $\tilde{S}''$ is supported on $\tilde{V} \approx \P(F)$, we obtain 
\begin{align}\label{eqh-dimS''}
(\pi_{\P(F)})_*\tilde{S}''=0.
\end{align}
 By Leray's decomposition (see \cite{Bott_Tu}), we can write 
$$\{\tilde{S}''\}= \pi_{\P(F)}^*\kappa_0+ \pi_{\P(F)}^*\kappa_1 \smile  \omega_{\mathcal{O}_{\P(F)}(1)},$$
where $\kappa_j$ is a cohomology class of bidimension $(j,j)$ on $V$ for $j=0,1$. By (\ref{eqh-dimS''}), we obtain 
$$\kappa_1= (\pi_{\P(F)})_*\{\tilde{S}''\}=\{(\pi_{\P(F)})_*\tilde{S}''\}=0.$$
 This implies that 
 $\{\tilde{S}''\}= \pi_{\P(F)}^*\kappa_0.$ It follows particularly that $\kappa_0 \ge 0$. Combining this with (\ref{eqlimSreta}) gives
 $$\lim_{k \to \infty} \langle \tilde{S}_{r_k}, \eta \rangle= -\int_{\P(F)} \pi_{\P(F)}^*\kappa_0 \wedge  \omega_{\mathcal{O}_{\P(F)}(1)}= - \kappa_0\le 0.$$
This together with (\ref{ine-liminftidleS_r}) implies the desired equality.
\endproof

Recall $n= \dim X$.  Write 
$$\tilde{f}^* [\tilde{V}]= \sum_z \nu_{z, \tilde{f},\tilde{V}} \delta_z,$$
where $\delta_z$ is the Dirac mass at $z$.
% In a local coordinates $(U,x)$ of $X,$ write $f(z)= \big( f_1(z), \ldots, f_n(z) \big).$   For $z_0\in \C,$ let  $\nu_{z_0,f'}$ be  the smallest non-negative integer such that $(z-z_0)^{-k}f'_j(z)$ for $1 \le j \le n$ are holomorphic functions which are not simultaneously zero at $z_0.$ This definition is independent of the choice of local charts.  So we can  define $R_f:= \sum_{z_0} \nu_{z_0,f'}\delta_z.$ So the support of $f_* R_f$ consists of at most a countable number of points. 
Recall that $V$ is smooth.  We define $\nu_{z, f, V}$  as follows. Put $\nu_{z, f, V}:= 0$ if $z\not \in  f^{-1}(V)$. Consider now $z\in f^{-1}(V)$. Let $U$ be a local chart around $f(z)$ on $X$ and $x=(x_1,\ldots, x_n)$ a coordinate system on $U$ such that $V= \{x_j=0: 1\le j \le s\}$. Write $f=(f_1, \ldots, f_n)$ in these local coordinates. Let $\nu_{z, f, V}$ be the smallest number among the multiplicities of $z$ in the zero divisors of $f_j$ for $1 \le j \le s$ on $f^{-1}(U)$. This definition is independent of the choice of local coordinates. Recall that  $N_f(r, V)$ is the counting function of $f$ with respect to the divisor $\sum_z \nu_{z, f, V} \delta_z$ on $\C$.

 %Put 
%$$N(r, R_f):=  \int_{1}^r \frac{dt}{t} \sum_{\{|z| <r: z \in \supp R_f\}} \nu_{z,f'}.$$

\begin{lemma} \label{le-tinhhieuNN1boitangentcuren} We have 
\begin{align}\label{ine-multzDDnga}
\nu_{z,f,V}=  \nu_{z, \tilde{f}, \tilde{V}}
\end{align}
and
$$N_{\tilde{f}}(r, \tilde{V})= N_f(r, V).$$
\end{lemma}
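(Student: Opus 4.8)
The second identity is immediate from the first: once we know that $\nu_{z,f,V}=\nu_{z,\tilde f,\tilde V}$ at every point $z\in\C$, summing over $z\in\D_t$ gives $n_{\tilde f}(t,\tilde V)=n_f(t,V)$ for all $t>0$, and integrating $\frac{1}{t}\,\d t$ from $1$ to $r$ yields $N_{\tilde f}(r,\tilde V)=N_f(r,V)$. So the plan is to establish the pointwise equality (\ref{ine-multzDDnga}), which is a purely local matter near a point $z\in f^{-1}(V)$; if $z\notin f^{-1}(V)$ both sides vanish by definition, since $\tilde f(z)=\rho^{-1}(f(z))$ then lies off $\tilde V$.

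First I would fix $z\in f^{-1}(V)$ and choose a local chart $U$ of $X$ around $f(z)$ with coordinates $x=(x_1,\dots,x_n)$ such that $V\cap U=\{x_1=\dots=x_s=0\}$. Writing $f=(f_1,\dots,f_n)$ on $f^{-1}(U)$ and $m:=\nu_{z,f,V}=\min_{1\le j\le s}\ord_z f_j$, I pick an index $j_0\in\{1,\dots,s\}$ with $\ord_z f_{j_0}=m$, so that $f_j(w)=(w-z)^m g_j(w)$ for $1\le j\le s$ with $g_j$ holomorphic near $z$ and $g_{j_0}(z)\neq 0$. (The coordinate-independence of $m$, already asserted above, comes from the fact that another such coordinate system $y$ satisfies $(y_1,\dots,y_s)=(x_1,\dots,x_s)$ as ideals, hence the $y_j$ are obtained from the $x_j$ by an invertible matrix of holomorphic functions, which does not change the minimal order.)

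Next I would realize the blowup explicitly: $\rho^{-1}(U)\subset U\times\P^{s-1}$ is cut out by $x_i\xi_j=x_j\xi_i$ for $1\le i,j\le s$, with $[\xi_1:\dots:\xi_s]$ homogeneous coordinates on $\P^{s-1}$, and $\tilde V=\rho^{-1}(V)=\{x_1=\dots=x_s=0\}$. In the affine chart $\{\xi_{j_0}\neq 0\}$, with coordinates $\big(x_{j_0},(u_i)_{i\le s,\,i\neq j_0},(x_l)_{l>s}\big)$ where $u_i=\xi_i/\xi_{j_0}$, the map $\rho$ reads $x_i=u_i x_{j_0}$ for $i\le s$, $i\neq j_0$, and is the identity on the remaining coordinates; in particular $\tilde V$ is the smooth hypersurface $\{x_{j_0}=0\}$ in this chart. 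Since $[f_1(w):\dots:f_s(w)]=[g_1(w):\dots:g_s(w)]$ near $z$ and $g_{j_0}(z)\neq 0$, the lift $\tilde f$ takes values in this chart near $z$, with $x_{j_0}$-coordinate equal to $f_{j_0}$, $u_i$-coordinate equal to $g_i/g_{j_0}$ (holomorphic near $z$), and $x_l$-coordinate equal to $f_l$. Therefore the pullback by $\tilde f$ of the local defining function $x_{j_0}$ of $\tilde V$ is $f_{j_0}$, whose order at $z$ is $m$, so $\nu_{z,\tilde f,\tilde V}=m=\nu_{z,f,V}$, which is (\ref{ine-multzDDnga}); the second assertion then follows as explained above.

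There is essentially no serious obstacle here: the argument is a routine local computation. The only points requiring attention are the choice of the affine chart of the blowup adapted to the index $j_0$ realizing the minimal order — so that the exceptional divisor becomes a coordinate hyperplane and $\tilde f$ stays inside the chart — and the coordinate-independence of $\nu_{z,f,V}$, both of which are elementary.
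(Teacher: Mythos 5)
Your proof is correct and follows essentially the same route as the paper: reduce to a local computation, express the blowup in an affine chart of $\P^{s-1}$ where $\tilde V$ is the coordinate hyperplane $\{x_{j_0}=0\}$, and read off $\nu_{z,\tilde f,\tilde V}=\ord_z f_{j_0}=m$. The only cosmetic difference is that you select the good chart directly by choosing an index $j_0$ realizing the minimal order (so $g_{j_0}(z)\neq 0$ forces $\tilde f$ into that chart), whereas the paper covers $\tilde U$ by charts $\tilde U_j$ with $|w_\ell|\le c$ and deduces from the bounded ratios $|f_j/f_s|\le c$ which index must attain the minimum — two ways of arriving at the same conclusion.
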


\proof  %If $z \not \in \supp f^*[D],$ then (\ref{ine-multzDDnga}) is clear because $\nu_{z, \tilde{f}}= \nu_z =\min\{\nu_z, 1\}=0.$ 
The second desired equality is a direct consequence of the first one. We prove now the first one.
Observe that $f^{-1}(V)= \tilde{f}^{-1}(\tilde{V})$ because $\rho \circ \tilde{f}= f$ and $\rho^{-1}(V)= \tilde{V}$. Hence it is enough to prove (\ref{ine-multzDDnga}) for $z\in f^{-1}(V)$.
Consider $z_0 \in f^{-1}(V)$.

 Let $(U,x)$ be  a local chart around $f(z_0)$ such that $V=\{x_j=0: 1 \le j \le s\}$. Write $f=(f_1, \ldots, f_n)$ as above. Denote by $\nu_{z,f_j}$ the multiplicity of $z$ in the zero divisor of $f_j$ on $f^{-1}(U)$ for $1 \le j \le s$. We have 
\begin{align}\label{eq-boiVboifjlocal}
\nu_{z_0, f,V}= \min_{1 \le j \le s}\{\nu_{z_0,f_j}\}.
\end{align}

%Identify $\P(\Ta U)$ with $U \times \P^{n-1}.$ The coordinate system $x$ induces naturally a coordinate system $(x, [v])$ in $U\times \P^{n-1},$ where $v=(v_1, \ldots,v_n) \in \C^n\backslash \{0\}$ and $[v]$ denotes the image of $v$ in $\P^{n-1}.$ 
 %Using standard local charts on $\P^{n-1},$ we can cover $U\times \P^{n-1}$ by a finite number of standard local charts. Let $\widehat U$ be the local chart with $v_n=1.$ So the coordinates on $\widehat U$ are $(x, v_1, \ldots, v_{n-1})$ and $\widehat D$ is still given by $x_1= v_1 =0.$ In these coordinates, $\widehat f= \big(f, g_1, \ldots, g_{n-1}\big),$  where $g_j:= f'_j/ f'_n$ for $1 \le j \le n-1.$  Observe that  the order of $g_1$ at $z_0$ is $(\nu_{z_0}-1- \nu_{z_0,f'}).$

Let us now recall how to construct the blowup $\tilde{X}$ along $V$ on $U$. Let $\tilde{U}:= \rho^{-1}(U)$. Let $w:=[w_1,\ldots,w_s]\in \P^{s-1}$. The set $\tilde{U}$ is the submanifold of $U \times \P^{s-1}$ given by the equations 
$$x_j w_\ell= x_\ell w_j$$
for $1\le j,\ell \le s$.   
Observe that $\tilde{f}(z)=\big(f(z), [f_1(z),\ldots, f_s(z)]\big) \in U \times \P^{s-1}$ for $z$ such that  $f(z) \in  U$.  

The set $\tilde{U}$ can be covered by $s$ standard local charts which we will describe  as follows. Let  $\tilde{U}_j$ be the subset of $\tilde{U}$ consisting of $(x,[w])$ with $w_j=1$ and $|w_\ell|\le c$ for $1 \le \ell \not =j \le s$, where $c$ is a constant big enough.  For $c$ big enough, the local charts $\tilde{U}_j$ cover $\tilde{U}$. Since the role of $\tilde{U}_j$ is the same, we now consider only $\tilde{U}_s$. The natural induced coordinates on $\tilde{U}_s$ are $(x, w_1, \ldots, w_{s-1})$ and $\tilde{f}=(f, f_1/f_s, \ldots, f_{s-1}/f_s)$ on $\tilde{f}^{-1}(\tilde{U}_s)$.  Hence, we have $|f_j/f_s| \le c$ on $\tilde{f}^{-1}(\tilde{U}_s)$ for $1 \le j \le s-1$.  The hypersurface $\tilde{V}$ is given by $x_s=0$ on $\tilde{U}_s$. 
We deduce that  if $\tilde{f}(z_0) \in \tilde{U}_s,$ then we must have   
$$\nu_{z_0, f,V}= \nu_{z_0, f_s}=\nu_{z_0, \tilde{f}, \tilde{V}}.$$
This finishes the proof.
\endproof

\begin{proof}[End of Proof of Theorem \ref{the-codim2V}]  %Note that since $\supp f_* R_f$ is an at most countable set and $p_{2, \mathcal{V}}$ is a submersion,  the set of $a$ for which $\mathcal{V}_a \cap \supp f_* R_f  \not =\varnothing$ is a countable union of proper analytic subsets of $Z.$ 
 
Let $a \in Z\backslash \mathcal{A}$ as above. By Lemmas \ref{le-tildeSnomass}, \ref{le-tinhhieuNN1boitangentcuren} and (\ref{eq-fngaDnga}), we get 
\begin{align*}
0\le \liminf_{r \to \infty} \frac{N_{f}(r, \mathcal{V}_a)}{T_{f}(r, \omega)}\le \liminf_{k \to \infty} \frac{N_{f}(r_k, \mathcal{V}_a)}{T_{f}(r_k, \omega)}= \liminf_{k \to \infty} \frac{N_{\tilde{f}}(r_k, \tilde{\mathcal{V}}_a)}{T_{f}(r_k, \omega)} =0.
\end{align*}
This finishes the proof.
\end{proof}

\section{Proof of Theorem \ref{thedefect0}} \label{sec-thjedefecto}

Let $X,L,f,E$ be as in the statement of Theorem \ref{thedefect0}. By using a basis of $E,$ we obtain an embedding from $X$ to a complex projective space.  So from now on, we can assume $X=\P^n$ and $L$ is the hyperplane line bundle of $X$. This reduction is not essential but it simplifies some computations. 

Consider first the case where $n\ge 2.$  Let $\Ta X$ be the tangent bundle of $X$. Let  $\widehat X:= \P(\Ta X)$ be the projectivisation of $\Ta X$ and $\pi: \widehat X \to X$ the natural projection. Let $\widehat f$ be the lift of $f$ to $\widehat X$ defined by  $\widehat f(z):= (f(z), [f'(z)]),$ where $f'$ is the derivative of $f$ and $z \in \C$. Hence $\widehat f$ is an entire curve in $\widehat X$. We also have $\widehat S_{r}, \widehat c_{r}$  for $\widehat f$ as $S_r, c_r$ for $f$.  

Let $\mathcal{O}_{\widehat X}(1)$ be the dual of the tautological line bundle of $\widehat X$. Let $\widehat \omega$ be a K\"ahler form on $\widehat X$ such that $\widehat \omega = \omega+ c\, \omega_{\mathcal{O}_{\widehat X}(1)}$, where $c$ is a strictly positive constant and  $\omega_{\mathcal{O}_{\widehat X}(1)}$ is a smooth Chern form of $\mathcal{O}_{\widehat X}(1)$ whose restriction to each fiber of $\pi$ is strictly positive.  Recall the following equality. 

\begin{lemma} \label{letautologi}We have
\begin{align} \label{eq-sosanhTfmuvoiTfngoaiLebhuuhan}
T_{\widehat f}(r, \widehat \omega)= T_f(r, \omega)+ o\big(T_f(r, \omega)\big)
\end{align}
as $r\to \infty$ outside a set of finite Lebesgue measure of $\R.$ 
\end{lemma}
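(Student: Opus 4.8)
The statement is a version of the lemma on logarithmic derivative / McQuillan's tautological inequality adapted to the Fubini–Study–type metric $\widehat\omega=\omega+c\,\omega_{\mathcal{O}_{\widehat X}(1)}$ on $\widehat X=\P(\Ta X)$. First I would split $T_{\widehat f}(r,\widehat\omega)$ using linearity of the characteristic function:
\[
T_{\widehat f}(r,\widehat\omega)=T_{\widehat f}(r,\pi^*\omega)+c\,T_{\widehat f}(r,\omega_{\mathcal{O}_{\widehat X}(1)}).
\]
Since $\pi\circ\widehat f=f$, the first term equals $T_f(r,\omega)$ (exactly, or up to $O(1)$ depending on the chosen representatives). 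So the whole content is to show that the second term, $T_{\widehat f}(r,\omega_{\mathcal{O}_{\widehat X}(1)})$, is $o\big(T_f(r,\omega)\big)$ as $r\to\infty$ outside a set of finite Lebesgue measure.

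For this I would invoke McQuillan's tautological inequality (see \cite{McQuillan}, or the diophantine-approximation proof in \cite{Vojta}, or Noguchi's lemma on the logarithmic derivative as presented in \cite{Noguchi}): for the canonical lift $\widehat f=(f,[f'])$ of a holomorphic curve $f:\C\to X$ to $\P(\Ta X)$, one has
\[
T_{\widehat f}(r,\omega_{\mathcal{O}_{\widehat X}(1)})\;\le\; N^{[1]}_{\mathrm{Ram}(f)}(r)+\epsilon\,T_f(r,\omega)+O(\log r+\log T_f(r,\omega))
\]
for $r$ outside a set of finite measure, where $N^{[1]}_{\mathrm{Ram}(f)}(r)$ is the (truncated) counting function of the ramification divisor of $f$. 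The ramification term is itself bounded above by $T_f(r,\omega)+O(1)$ by the First Main Theorem applied to the derivative, but that only gives $O(T_f)$, not $o(T_f)$; to upgrade to $o(T_f)$ I would instead use the sharper form of the logarithmic derivative lemma (e.g. via the Ahlfors–Shimizu characteristic and concavity of $\log$), which yields
\[
T_{\widehat f}(r,\omega_{\mathcal{O}_{\widehat X}(1)})=o\big(T_f(r,\omega)\big)
\]
directly, for $r$ outside a set of finite Lebesgue measure. Concretely, in the chart picture $\widehat f^*\omega_{\mathcal{O}_{\widehat X}(1)}$ is, up to $\pi^*\omega$ and $\ddc$ of bounded terms, equal to $\ddc\log\big(1+|f'|^2_{h}/(1+|f|^2)\big)$-type expressions (the "logarithmic derivative" $f''/f'$ in suitable coordinates), and the classical calculus lemma (Borel's lemma, applied to $\int_{\D_r}\widehat f^*\omega_{\mathcal{O}_{\widehat X}(1)}$ as a function of $r$) converts the pointwise Nevanlinna estimate into the desired $o(T_f(r,\omega))$ bound outside an exceptional set of finite measure.

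The main obstacle is purely bookkeeping: matching the normalization of $\omega_{\mathcal{O}_{\widehat X}(1)}$ used here (an arbitrary fiberwise-positive Chern form) with the specific metric for which the tautological/logarithmic-derivative inequality is usually stated, and checking that $f$ transcendental (hence $T_f(r,\omega)\to\infty$, in fact $T_f(r,\omega)/\log r\to\infty$) makes the error terms $O(\log r+\log T_f(r,\omega))$ genuinely $o(T_f(r,\omega))$. Since any two smooth representatives of $\omega_{\mathcal{O}_{\widehat X}(1)}$ differ by $\ddc$ of a bounded function, $T_{\widehat f}(r,\omega_{\mathcal{O}_{\widehat X}(1)})$ is independent of the choice up to a term controlled by $\|\ddc\widehat S_r\|\lesssim \widehat c_r^{-1}$ (as in \eqref{ine-massddcSr}), i.e.\ up to $O(1)$, so no real loss occurs there. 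I would cite \cite[2.5]{Huynh2016} or \cite{McQuillan,Noguchi} for the precise form of the inequality rather than reproving it.
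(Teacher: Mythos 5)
Your opening reduction is exactly the paper's: split $T_{\widehat f}(r,\widehat\omega)=T_{\widehat f}(r,\pi^*\omega)+c\,T_{\widehat f}(r,\omega_{\mathcal{O}_{\widehat X}(1)})$, identify the first term with $T_f(r,\omega)$, and reduce the whole lemma to showing $T_{\widehat f}(r,\omega_{\mathcal{O}_{\widehat X}(1)})=o\big(T_f(r,\omega)\big)$ outside a finite-measure set. The paper then proves this directly: it builds an explicit Hermitian fibre metric $h=\sum_j h_j$ with $h_j(x_j,v)=\chi_j(x_j)\sum_\ell|v_\ell|^2$ via a partition of unity, applies the Lelong--Jensen formula to reduce $T_{\widehat f}(r,\omega_{\mathcal{O}_{\widehat X}(1)})$ to the proximity integral $\int_{\partial\D_r}\log\sum_j\chi_j(f)\sum_\ell|f'_{j\ell}|^2\,d\mu_r+O(1)$, and cites the standard estimates of the lemma on logarithmic derivative (Noguchi, Le.\ 4.7.1) to bound that integral. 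It never routes through the tautological inequality in the Vojta form.

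Your detour through $T_{\widehat f}(r,\omega_{\mathcal{O}_{\widehat X}(1)})\le N^{[1]}_{\mathrm{Ram}(f)}(r)+\epsilon T_f+O(\log r+\log T_f)$ is where the gap is. You are right that this inequality by itself does not give $o(T_f)$, since $N^{[1]}_{\mathrm{Ram}(f)}(r)$ need not be $o(T_f(r,\omega))$. But the proposed fix --- invoking ``the sharper form of the logarithmic derivative lemma'' plus Borel's calculus lemma --- does not actually address this obstruction: those refinements improve the bound on a \emph{proximity} term of log-derivative type, not on the ramification \emph{counting} function. The clean way out is the one the paper takes implicitly: write the Poincar\'e--Lelong identity for the section of $T^*\C\otimes\widehat f^*\mathcal{O}_{\widehat X}(-1)$ determined by $f'$, which gives $T_{\widehat f}(r,\omega_{\mathcal{O}_{\widehat X}(1)})=\int_{\partial\D_r}\log|f'|_h^2\,d\mu_r-N_{\mathrm{Ram}(f)}(r)+O(1)$, so that the ramification divisor enters with a \emph{minus} sign and the upper bound $T_{\widehat f}(r,\omega_{\mathcal{O}_{\widehat X}(1)})\le o(T_f)$ follows at once from the logarithmic-derivative estimate without any information on $N_{\mathrm{Ram}(f)}$. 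Note, however, that the statement you are proving is an \emph{equality}, so a matching lower bound (that the proximity integral minus $N_{\mathrm{Ram}(f)}(r)$ is $\ge -o(T_f)$) is also needed, and neither your sketch nor an unspecified ``sharper form'' supplies it; you should make that step explicit rather than leaving it implicit in a citation.
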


\proof The desired assertion is equivalent to the equality
$$T_{\widehat f}\big(r,\mathcal{O}_{\widehat X}(1)\big)=  o\big(T_f(r, \omega)\big).$$
This is the tautological inequality of McQuillan \cite{McQuillan} which is in fact a consequence of the Lemma on logarithmic derivative. For the readers' convenience, we briefly recall how to prove it. 

Let $(\chi_j)$ be a partition of unity of $X$ subordinated to a finite covering $(U_j)$ of $X$ where $U_j$ are local charts on $X$. Trivialize  $\widehat X\approx U_j \times \P^{n-1}$ on $U_j.$ Let $x_j=(x_{j1}, \ldots, x_{jn})$ be the coordinates on $U_j$. Write $f(z)= (f_{j1}, \ldots, f_{jn})$ accordingly for $z\in f^{-1}(U_j).$ Put 
$$h_j(x_j,v):= \chi_j(x_j) \sum_{l=1}^n |v_l|^2,$$ 
where $[v] \in \P^{n-1}$. Thus $h:= \sum_j h_j$ is a Hermitian metric on $\mathcal{O}_{\widehat X}(1)$. This combined with the Lelong-Jensen formula gives
$$T_{\widehat f}\big(r, \mathcal{O}_{\P(\Ta X}(1)\big)= \int_{\partial \D_{r}} \log  \sum_j \chi_j(f) \sum_{\ell=1}^n |f'_{j\ell}|^2 d \mu_{r}+ O(1).$$
 Standard estimates in proofs of Lemma on logarithmic derivatives (see \cite[Le. 4.7.1]{Noguchi}) show that the last integral is $o\big(T_f(r, \omega)\big)$ as $r\to\infty$ outside a set of finite Lebesgue measure.  This finishes the proof. 
\endproof

We fix a sequence $\mathbf{r}:=(r_k)\in \R^+$ converging to $\infty$ such that  $\widehat S_{r_k}$ converges to a \emph{closed} positive current $\widehat S$  as $k \to \infty$ and (\ref{eq-sosanhTfmuvoiTfngoaiLebhuuhan}) holds for $r=r_k$.  %Fix a basis $(\sigma_j)_{j=0}^n$ of $E.$ 
For $a=(a_0,\ldots,a_n)\in \C^{n+1}\backslash \{0\}$, put $\sigma_a(x):= \sum_{j=0}^n a_j x_j$, where $x=[x_0 :\ldots: x_n] \in X=\P^n$.  Identify $\P(E)$ with $\P^n$ via $a \longleftrightarrow \sigma_a.$  Denote by $D_a$ the hyperplane generated by $\sigma_a$.  Let $\pi_j: \widehat X\times \P(E)$ be the natural projections to its components for $j=1,2$.  

Let $\widehat D_a$ be the set of $(x,[v])\in \widehat X$ for $x \in D_a$ and $v \in \Ta_xX \backslash \{0\}$ tangent to $D_a$.  Let $U$ be a local chart of $X$ over which  $L$ is trivial. We identify $\sigma_j$ with functions on $U$ and $d \sigma_j$ with 1-form on $U$ (hence a function on $\Ta U$). Put 
$$H_1:= \big\{(x, [v],[a])\in \widehat X  \times \P^n:  \sigma_a(x) =0\big\}$$
and
$$\quad H_{2,U}:= \big\{(x, [v],[a])\in (\widehat X|_U) \times \P^n : \langle \d\sigma_a(x), v \rangle=0\big\}.$$
Let $U'$ be another local chart similar to $U$. Trivialize $L$ on $U'$.  Let $\sigma_a'$ be the trivialisation of $\sigma_a$ on $U'$. We have $\sigma_a'= g \sigma_a$ for some nowhere vanishing holomorhic function $g$  on $U \cap U'$ which is independent of $a$.  Thus $\d \sigma_a'= \d(g \sigma_a)= \d g \sigma_a + g \d \sigma_a.$ We deduce that  
 $H_1 \cap H_{2,U}=H_1 \cap H_{2,U'}$ on $\pi_1^{-1}\big(\Ta(U \cap U')\big)$.  Gluing $H_1\cap H_{2,U}$ together, we obtain  a  well-defined analytic subset $\mathcal{V}$ of  $\widehat X \times \P(E)$.

Let $p_{j, \mathcal{V}}$ be the restriction of $p_j$ to $\mathcal{V}$ for $j=1,2$. We have

\begin{lemma} \label{le-ktradkpVtruonghopdivisor} The set  $\mathcal{V}$ is a smooth submanifold of codimension $2$ of $X \times \P(E)$, the map $p_{1, \mathcal{V}}$ is a submersion and  the fiber of $p_{2,\mathcal{V}}$ at $a\in \P(E)$ is $\widehat D_a$.
\end{lemma}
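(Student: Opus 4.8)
\textbf{Proof plan for Lemma \ref{le-ktradkpVtruonghopdivisor}.}

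The plan is to verify all three assertions by working in the standard local charts introduced just before the statement. First I would establish smoothness and the codimension count. Cover $\widehat X\times\P(E)$ by charts of the form $\big(\widehat X|_U\big)\times\{a_i\neq 0\}$ with $U$ an affine chart of $X=\P^n$; on such a chart $\mathcal V$ is cut out by the two holomorphic functions $\sigma_a(x)$ and $\langle\d\sigma_a(x),v\rangle$. The key point is that the differentials of these two functions are everywhere linearly independent on $\mathcal V$: fixing $x$ and $[v]$, the map $a\mapsto\big(\sigma_a(x),\langle\d\sigma_a(x),v\rangle\big)$ is already a surjective linear map $\C^{n+1}\to\C^2$ (its vanishing locus is a codimension-$2$ linear subspace of $\C^{n+1}$, since it forces $[a]$ to lie in a fixed line of the dual projective space only when $n=1$, and for $n\ge 2$ the two conditions are genuinely independent because no hyperplane is forced). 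Hence the Jacobian with respect to the $a$-variables alone already has rank $2$ at every point of $\mathcal V$, which simultaneously shows $\mathcal V$ is a smooth complex submanifold of codimension exactly $2$ and, by the implicit function theorem applied in the $a$-directions, that the projection $p_{2,\mathcal V}$ (hence a fortiori the well-definedness of $\mathcal V$ as an abstract manifold) behaves well; one must also double-check consistency of the two defining equations on overlaps of the $U$-charts, but this is exactly the computation $\d\sigma_a'=\d g\,\sigma_a+g\,\d\sigma_a$ already recorded in the text, so $H_1\cap H_{2,U}$ glues.

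Next I would check that $p_{1,\mathcal V}\colon\mathcal V\to\widehat X$ is a submersion. Fix a point $(x_0,[v_0],[a_0])\in\mathcal V$ and work in the affine chart where, say, $x_0$ lies in $\{x_0\neq 0\}$ and some coordinate $a_{i_0}$ of $a_0$ is nonzero. It suffices to show that for each tangent vector $\xi$ to $\widehat X$ at $(x_0,[v_0])$ there is a tangent vector to $\mathcal V$ mapping to it; equivalently, that the linear system obtained by differentiating $\sigma_a(x)=0$ and $\langle\d\sigma_a(x),v\rangle=0$ in all variables $(x,[v],[a])$ can be solved for the $a$-increment once the $(x,[v])$-increment $\xi$ is prescribed. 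This again reduces to the surjectivity onto $\C^2$ of the differential in the $a$-variables, which we just verified; so given $\xi$ we pick any $a$-increment correcting the two scalar constraints, and obtain a lift. This is where the hypothesis $n\ge 2$ is used in an essential way and also where the genericity of hyperplanes is invisible — the statement holds for every fibre.

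Finally, the identification of the fibre of $p_{2,\mathcal V}$ over $[a]$ with $\widehat D_a$ is essentially by definition: a point $(x,[v],[a])\in\mathcal V$ satisfies $\sigma_a(x)=0$, i.e. $x\in D_a$, and $\langle\d\sigma_a(x),v\rangle=0$, i.e. $v\in\Ta_xX\setminus\{0\}$ is tangent to the hyperplane $D_a$; conversely any such $(x,[v])$ gives a point of $\mathcal V\cap(\widehat X\times\{[a]\})$. One should note that the condition $\langle\d\sigma_a(x),v\rangle=0$ is chart-independent after the gluing check above, so $\widehat D_a$ is well-defined in $\widehat X$ and equals $p_1(\mathcal V\cap(\widehat X\times\{[a]\}))$ precisely. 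The main obstacle, such as it is, is bookkeeping: carrying the two-equation description through the affine charts of $\P^n$ in the $x$-variable, the $\P^{n-1}$-fibre charts in the $[v]$-variable, and the $\P^n$-charts in the $[a]$-variable simultaneously, and confirming that the rank-$2$ statement for the $a$-derivative is uniform; but since that rank statement is pure linear algebra on $\C^{n+1}$ and is manifestly valid for $n\ge 2$, there is no real difficulty. $\qed$
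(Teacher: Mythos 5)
Your approach is essentially the same as the paper's: both proofs hinge on the observation that the partial derivative in the $a$-variables of the two defining equations $\sigma_a(x)=0$ and $\langle\d\sigma_a(x),v\rangle=0$ already has rank $2$ at every point of $\mathcal{V}$, which gives smoothness, codimension $2$, and surjectivity of the differential of $p_{1,\mathcal{V}}$ all at once (the paper implements the submersion step by an explicit local parametrization and computes $p_{1,\mathcal{V}}$ in coordinates, you by solving the linearized system for the $a$-increment, but this is the same linear algebra). One small imprecision worth correcting: the rank-$2$ fact does not come from $n\ge 2$ — in the affine chart the two linear forms on $\C^{n+1}$ are $a\mapsto a_0+\sum_{j\ge 1}a_jx_j$ and $a\mapsto\sum_{j\ge 1}a_jv_j$ with $v\ne 0$, and since the first has $a_0$-coefficient $1$ while the second has $a_0$-coefficient $0$ they are never proportional, so the rank is $2$ for every $n\ge 1$. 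The hypothesis $n\ge 2$ enters only to ensure that a codimension-$2$ linear subspace of $\C^{n+1}$ projectivizes to a nonempty subvariety of $\P(E)\cong\P^n$, i.e., so that $\widehat D_a$ (hence $\mathcal{V}$) is nonempty and the statement has content; your parenthetical attributes the independence of the two forms to $n\ge 2$, which is not the right justification, though the conclusion stands.
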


\proof  The fact that the fiber of $p_{2,\mathcal{V}}$ at $a\in \P(E)$ is $\widehat D_a$ is clear from the construction.  It is sufficient to check the remaining desired assertions for $x=[x_0: \ldots : x_n]$ in  a local chart of $X=\P^n$. Hence, consider the local chart $U:= \{x\in \P^n: x_0=1\}$. We see that 
$$\mathcal{V}= \{a_0+ \sum_{j=1}^n a_j x_j=0, \sum_{j=1}^n a_j v_j =0 \}.$$
For $x, [v]$ fixed, these two defining equations of $\mathcal{V}$ give two hyperplanes in $\P(E)$ which are transverse to each other because $n\ge 2$. So $\mathcal{V}$ is smooth. Let $W_{j_0}$ be the local chart of $\P(E)$ containing $[a]$ with $a_{j_0}=1$. Consider first the case where $j_0\not =0$. Observing that $\mathcal{V}$ is the set of $(x,[v],[a])$ such that $x_{j_0}= -a_0 - \sum_{j\not= j_0} a_j x_j$ and $v_{j_0} = - \sum_{j \not = j_0} a_j v_j.$ So the map $p_{1, \mathcal{V}}$ can be identified with the map 
\begin{align*}
&\big(a_0, \ldots, a_{j_0-1}, a_{j_0+1}, \ldots, a_n, [v_1, \ldots, v_{j_0-1}, v_{j_0+1}, \ldots, v_n], x_1, \ldots, x_{j_0-1}, x_{j_0+1}, \ldots, x_n\big)\\
&\longrightarrow (x_1, \ldots, x_{j_0-1},     -a_0 - \sum_{j\not= j_0} a_j x_j, x_{j_0+1}, \ldots, x_n),
 \end{align*}
which is of maximal rank. The case where $j_0=1$ is treated similarly by observing that $(a_1, \ldots,a_n) \not = 0$ if $(x,[v],[a]) \in \mathcal{V}$ and $a_0=1$.  This finishes the proof. 
\endproof

Lemma \ref{le-ktradkpVtruonghopdivisor} combined with Proposition \ref{prodensitycurrentSXa0codim2} applied to $\widehat X$ in place of $X$ and $Z:= \P(E)$ gives

\begin{corollary} There exists a countable union $\mathcal{A}$ of proper analytic subsets of $\P(E)$ such that for $a\in \P(E) \backslash \mathcal{A}$, we have that 
$$\widehat S \curlywedge [\widehat D_a]= 0.$$
\end{corollary}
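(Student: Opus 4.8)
The plan is to deduce the corollary directly from Proposition~\ref{prodensitycurrentSXa0codim2}, applied not to the original data but to $(\widehat X, \P(E), \mathcal{V}, \widehat\omega, \widehat f)$ in the roles of $(X, Z, \mathcal{V}, \omega, f)$. Recall that Proposition~\ref{prodensitycurrentSXa0codim2} was established for any configuration satisfying the hypotheses of Theorem~\ref{the-codim2V}, and its conclusion $S \curlywedge [\mathcal{V}_a] = 0$ for $a$ outside a countable union of proper analytic subsets of the base is exactly of the shape we want, once we read $\mathcal{V}_a = p_1(\mathcal{V} \cap (\widehat X \times \{a\})) = \widehat D_a$ (this last identification being part of Lemma~\ref{le-ktradkpVtruonghopdivisor}) and $S = \widehat S$.

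So the first thing to do is to check that the present data fall under Theorem~\ref{the-codim2V}. The manifold $\widehat X = \P(\Ta X)$ is projective, hence Kähler, and $\widehat f$ is transcendental: since $\pi \circ \widehat f = f$ and $f$ is transcendental, $\widehat f$ cannot be a rational curve; alternatively this is immediate from Lemma~\ref{letautologi}, which gives $T_{\widehat f}(r, \widehat\omega) = T_f(r, \omega) + o(T_f(r, \omega))$, hence a characteristic function that is not of order $O(\log r)$. By Lemma~\ref{le-ktradkpVtruonghopdivisor}, $\mathcal{V} \subset \widehat X \times \P(E)$ is a smooth submanifold of codimension $2$, the projection $p_{1,\mathcal{V}}$ is a submersion, and the fibre of $p_{2,\mathcal{V}}$ over $a$ equals $\widehat D_a$; it remains only to observe that $p_{2,\mathcal{V}}$ is surjective, i.e. that $\widehat D_a \ne \emptyset$ for every $a \in \P(E)$, which holds because, with $n \ge 2$, the hyperplane $D_a \subset \P^n$ is nonempty of positive dimension and therefore carries tangent directions at each of its points. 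Thus all hypotheses of Theorem~\ref{the-codim2V} hold with $s = 2$, and in particular those under which Proposition~\ref{prodensitycurrentSXa0codim2} was proved.

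Finally I would match the currents: $\widehat S$ was fixed above as a limit $\lim_k \widehat S_{r_k}$ of the normalised Nevanlinna currents $\widehat S_r = \widehat c_r^{-1} \int_1^r \frac{\d t}{t}\, \widehat f_*[\D_t]$, $\widehat c_r = T_{\widehat f}(r, \widehat\omega)$, which is exactly the positive closed current to which Proposition~\ref{prodensitycurrentSXa0codim2} refers when it is run with $\widehat f$ and $\widehat\omega$ (along the subsequence $\mathbf{r}$ fixed earlier, chosen precisely so that this limit exists and is closed). Invoking the proposition then yields a countable union $\mathcal{A}$ of proper analytic subsets of $\P(E)$ such that, for $a \notin \mathcal{A}$, $\widehat D_a$ is smooth and $\widehat S \curlywedge [\widehat D_a] = 0$, which is the claim. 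There is no genuine obstacle at this stage: the substantive work sits upstream, in Theorem~\ref{the-densitybangkhong} and Lemma~\ref{le-sosanhRSvoiTRSdensity} (the density-current machinery behind Proposition~\ref{prodensitycurrentSXa0codim2}) and in Lemma~\ref{le-ktradkpVtruonghopdivisor} (smoothness of the tangency locus $\mathcal{V}$, its codimension, and the submersion property of $p_{1,\mathcal{V}}$). The only point demanding a little care is the bookkeeping just described, namely that the subsequence $\mathbf{r}$ and the normalisation by $T_{\widehat f}(r,\widehat\omega)$ used here are indeed the ones feeding into the proposition.
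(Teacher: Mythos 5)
Your proof is correct and essentially reproduces the paper's own one-line derivation: the paper likewise obtains the corollary by applying Proposition~\ref{prodensitycurrentSXa0codim2} to $(\widehat X,\,\P(E),\,\mathcal{V},\,\widehat f)$, using Lemma~\ref{le-ktradkpVtruonghopdivisor} to verify the hypotheses. You additionally supply (correctly) the small checks the paper leaves implicit, namely the transcendence of $\widehat f$, the surjectivity of $p_{2,\mathcal{V}}$, and the matching of the subsequence $\mathbf{r}$ and the normalisation defining $\widehat S$.
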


From now on fix  $a \in \P(E) \backslash \mathcal{A}$ and write $D, \widehat D$ for $D_a, \widehat D_a$ to simplify the notation.   Let  $\rho: \tilde{X} \to \widehat X$ be the blowup of $\widehat X$ along $\widehat D$. Let $\tilde{D}$ be the exceptional divisor of that blowup.  
Lift $\widehat f$ to a curve $\tilde{f}$ in $\tilde{X}$. Let $\tilde{\omega}$ be a K\"ahler form on $\tilde{X}$ such that 
\[
 T_{\tilde{f}}(r,\tilde{\omega}) \le T_{\widehat f}(r, \widehat \omega)
 +
 O(1),
 \]
see Lemma \ref{le-uocluongTfngaomeganga}. Let $\tilde{S}_r$ be the pull-back of $\widehat S_r$ by $\rho$. Let $\tilde{S}$ be the strict transform of $\widehat S$ by $\rho$. 
Let $\eta$ be a closed form in the cohomology class of $\tilde{D}$. By Lemma \ref{le-tildeSnomass} applied to $\widehat S, \widehat D$, we get
\begin{align} \label{eq-limitSngarkfhat}
\lim_{k \to \infty} \langle \tilde{S}_{r_k}, \eta \rangle  = 0.
\end{align}

\begin{lemma} \label{le-DDngapotential} We have
$$\langle \varphi_{\tilde{D}}, \ddc \tilde{S}_r \rangle= \langle \varphi_D, \ddc S_r \rangle+ c_r^{-1} O(1)$$
as $r \to \infty.$ 
\end{lemma}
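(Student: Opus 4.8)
The plan is to reduce the estimate to the single observation that, as functions on $\C$, $\varphi_{\tilde D}\circ\tilde f$ and $\varphi_D\circ f$ coincide up to a bounded term; once this is in hand the estimate follows by inserting the explicit formulas for $\ddc S_r$ and $\ddc\tilde S_r$ and then absorbing the mismatch of the two normalising constants with the tautological inequality.

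First I would unwind the definitions. Recall that $\varphi_{\tilde D}$ is the pull-back $\rho^*\pi^*\varphi_D$: since $\rho$ and $\pi$ are holomorphic and $[D]=\ddc\varphi_D+\omega$, this is a quasi-potential on $\tilde X$ with $\ddc(\rho^*\pi^*\varphi_D)=\rho^*\pi^*[D]-\rho^*\pi^*\omega$, and by the very definition of the lifts $\widehat f$ and $\tilde f$ one has $\pi\circ\rho\circ\tilde f=f$. Hence
\[
\varphi_{\tilde D}\circ\tilde f=\varphi_D\circ(\pi\circ\rho\circ\tilde f)=\varphi_D\circ f
\]
identically on $\C$ (if $\varphi_{\tilde D}$ is only fixed up to a smooth additive term, this holds modulo $O(1)$, which is all we need). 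Next, recall from the computation following (\ref{eq-firstmain}) that $\ddc S_r=c_r^{-1}(f_*\mu_r-f_*\mu_1)$, and, in exactly the same way, $\ddc\tilde S_r=\widehat c_r^{-1}(\tilde f_*\mu_r-\tilde f_*\mu_1)$, where $\widehat c_r=T_{\widehat f}(r,\widehat\omega)$ is the normalising constant of $\widehat S_r$ and $\tilde S_r=\rho^*\widehat S_r$. For $\varrho\in\{1,r\}$ the previous step gives $\langle\varphi_{\tilde D},\tilde f_*\mu_\varrho\rangle=\int_{\partial\D_\varrho}\varphi_{\tilde D}\circ\tilde f\,\d\mu_\varrho=\int_{\partial\D_\varrho}\varphi_D\circ f\,\d\mu_\varrho+O(1)=\langle\varphi_D,f_*\mu_\varrho\rangle+O(1)$, whence
\[
\widehat c_r\,\langle\varphi_{\tilde D},\ddc\tilde S_r\rangle=\langle\varphi_D,\,f_*\mu_r-f_*\mu_1\rangle+O(1)=c_r\,\langle\varphi_D,\ddc S_r\rangle+O(1).
\]
Dividing by $\widehat c_r$, using the First Main Theorem bound $\langle\varphi_D,\ddc S_r\rangle=N_f(r,D)/c_r-1=O(1)$ from (\ref{eq-firstmain}), and invoking Lemma \ref{letautologi} (so that $\widehat c_r=c_r+o(c_r)$, hence $\widehat c_r^{-1}=c_r^{-1}+c_r^{-1}o(1)$ along the sequence $\mathbf r$ fixed before the statement), one concludes $\langle\varphi_{\tilde D},\ddc\tilde S_r\rangle=\langle\varphi_D,\ddc S_r\rangle+c_r^{-1}O(1)$.

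The one delicate point is the comparison of $\varphi_{\tilde D}$ with $\rho^*\pi^*\varphi_D$: with the definition $\varphi_{\tilde D}=\rho^*\pi^*\varphi_D$ the identity $\varphi_{\tilde D}\circ\tilde f=\varphi_D\circ f$ is automatic, and I would keep that definition rather than work with a quasi-potential attached to a smooth representative of the exceptional class on $\tilde X$ (for such a choice the difference from $\rho^*\pi^*\varphi_D$ is singular along the strict transform of $\pi^{-1}(D)$, and its circle averages are no longer $O(1)$, so the two normalisations are genuinely not interchangeable). The remaining technical matter — passing between the constants $c_r$ and $\widehat c_r$ — is precisely what Lemma \ref{letautologi} governs along the sequence $\mathbf r$; this is the reason the error in the conclusion is stated as $c_r^{-1}O(1)$, and for the application it is only used in the weaker form that the two brackets have the same limit along $\mathbf r$.
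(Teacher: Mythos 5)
Your proof does not establish the lemma the paper is using: you silently replace the paper's $\varphi_{\tilde D}$ by the pullback $\rho^*\pi^*\varphi_D$, and these are genuinely different quasi-plurisubharmonic functions. In the paper, $\varphi_{\tilde D}$ is a quasi-potential of the exceptional divisor $\tilde D$ alone, i.e.\ $[\tilde D]=\ddc\varphi_{\tilde D}+\eta$ with $\eta$ a \emph{smooth} closed form in the class of $\tilde D$; this is stated in the first line of the paper's own proof and is exactly the form needed when the First Main Theorem is applied to $\tilde f$ and $\tilde D$ in Proposition~\ref{pro-hieuTfvaN1bieudhien}. Your substitute satisfies instead
$$
\ddc\big(\rho^*\pi^*\varphi_D\big)=\rho^*\pi^*[D]-\rho^*\pi^*\omega=[\tilde D]+[\tilde D']-\rho^*\pi^*\omega,
$$
where $\tilde D'$ denotes the strict transform of $\pi^{-1}(D)$ under $\rho$; so it is a potential of $\tilde D+\tilde D'$, not of $\tilde D$. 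With your choice, the lemma degenerates to a tautology: combined with the First Main Theorem for $\tilde f$ it says only that $N_{\tilde f}(r,\tilde D+\tilde D')=N_f(r,D)$, which is automatic because $\tilde f$ lifts $f$ and $(\pi\circ\rho)^{-1}(D)=\tilde D\cup\tilde D'$. Plugging this into Proposition~\ref{pro-hieuTfvaN1bieudhien} no longer isolates $N_f^{[1]}(r,D)$; the chain of equalities collapses.

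The gap between the two choices of $\varphi_{\tilde D}$ is not small and is in fact the entire content of the argument. Up to an additive constant, $\varphi_{\tilde D}-\rho^*\pi^*\varphi_D=-\psi$, where $\psi$ is a quasi-potential of $[\tilde D']$ relative to the smooth form $\rho^*\pi^*\omega-\eta$. Pairing with $\ddc\tilde S_{r_k}$ and using the First Main Theorem for $\tilde f,\tilde D'$, the identity $T_{\tilde f}(r,\rho^*\pi^*\omega)=T_f(r,\omega)$, and Lemmas~\ref{le-tildeSnomass}, \ref{letautologi}, \ref{le-tinhhieuNN1boitangentcurennew}, one finds for generic $D$ and along $\mathbf r$ that
$$
\langle\varphi_{\tilde D},\ddc\tilde S_{r_k}\rangle-\langle\rho^*\pi^*\varphi_D,\ddc\tilde S_{r_k}\rangle
=1-\frac{N_f^{[1]}(r_k,D)}{T_f(r_k,\omega)}+o(1),
$$
which is precisely the quantity the paper wants to show tends to zero. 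So the two brackets having the same limit is \emph{equivalent} to Theorem~\ref{thedefect0}. Your closing remark correctly senses that the two normalisations are not interchangeable; the right conclusion, though, is that your version of the lemma carries no information, not that the paper should have used $\rho^*\pi^*\varphi_D$. The paper's own route compares $\varphi_{\tilde D}$ and $\varphi_D$ to logarithms of distance functions and invokes $|\log\dist(\tilde x,\tilde D)-\log\dist(\pi\circ\rho(\tilde x),D)|\lesssim 1$; the delicate point you have identified is really about justifying that estimate near $\tilde D'$, and that is the place to focus your attention rather than on a redefinition of $\varphi_{\tilde D}$.
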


\proof
Since $\varphi_{\tilde{D}}$ is a potential of $\tilde{D},$ we get 
$$|\varphi_{\tilde{D}}(\tilde{x})- \log \dist(\tilde{x}, \tilde{D})| \lesssim 1.$$
Thus 
$$\lim_{k\to \infty} \langle \log \dist(\tilde{x}, \tilde{D}), \ddc \tilde{S}_{r_k} \rangle =0.$$
Using the fact that $(\pi\circ \rho)(\tilde{D})=D$ gives 
$$|\log \dist(\tilde{x}, \tilde{D}) -  \log\dist(\pi\circ \rho (\tilde{x}), D) | \lesssim 1.$$
We also have  $(\pi\circ \rho)_* \tilde{S}_r= S_r$
because $\tilde{f}$ is a lift of $f$ to $\tilde{X}$. Recall that the mass of $\ddc \tilde{S}_{r}$ is $O(1) c_r^{-1}$ as $r \to \infty$.  It follows that 
$$\langle \log \dist(\tilde{x}, \tilde{D}), \ddc \tilde{S}_{r_k} \rangle= \langle \log \dist(x, D), (\pi\circ \rho)_*\ddc \tilde{S}_{r_k} \rangle+ O(1) c_r^{-1},$$
which is equal to  $\langle \log \dist(x, D),  \ddc S_{r_k} \rangle+ O(1)c_r^{-1}$  as $r \to \infty$.  The proof is finished.
\endproof

Write 
$$f^*[D]= \sum_z \nu_{z,f,D} \delta_z, \quad \tilde{f}^* [\tilde{D}]= \sum_z \nu_{z, \tilde{f}, \tilde{D}} \delta_z.$$
For $z \in \C$, let $\nu_{z,\widehat f, \widehat D}$ be the multiplicity of $z$ with respect to $\widehat{f}$, $\widehat{D}$ as in the setting of Lemma \ref{le-tinhhieuNN1boitangentcuren}.  Applying \eqref{ine-multzDDnga} to $\widehat f, \widehat D, \tilde{D},$ we obtain $\nu_{z,\widehat f, \widehat D}= \nu_{z, \tilde{f}, \tilde{D}}$.

In a local coordinates $(U,x)$ of $X,$ write $f(z)= \big( f_1(z), \ldots, f_n(z) \big)$. Let $f'_j$ be the derivative of $f_j$ for $1 \le j \le n$.  For $z_0\in \C,$ let  $\nu_{z_0,f'}$ be  the smallest non-negative integer such that $(z-z_0)^{-k}f'_j(z)$ for $1 \le j \le n$ are holomorphic functions which are not simultaneously zero at $z_0.$ This definition is independent of the choice of local charts. Thus the current $R_f:= \sum_{z_0\in \C} \nu_{z_0,f'}\delta_{z_0}$ is well-defined. Observe that the support of $f_* R_f$ consists of at most a countable number of points.  

\begin{lemma} \label{le-tinhhieuNN1boitangentcurennew} For $z\in \supp f^* D$, we have 
\begin{align}\label{ine-multzDDnganew}
\nu_{z,f,D} - 1=  \nu_{z, \widehat f,\widehat D}+ \nu_{z, f'}=  \nu_{z, \tilde{f},\tilde{D}}+ \nu_{z, f'}.
\end{align}
Consequently, for $D$ with $D\cap \supp f_* R_f =\varnothing$, there holds 
$$N_{\tilde{f}}(r, \tilde{D})= N_{\widehat f}(r, \widehat D) = N_f(r, D)-N^{[1]}_{f}(r, D).$$
\end{lemma}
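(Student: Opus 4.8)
The plan is to work out the local picture carefully, since the statement is essentially a computation of multiplicities together with a comparison with the already-established equality $\nu_{z,\widehat f,\widehat D}=\nu_{z,\tilde f,\tilde D}$ (which follows from \eqref{ine-multzDDnga} applied to $\widehat f,\widehat D,\tilde D$, as noted before the lemma). First I would fix $z_0\in\supp f^*D$, choose an affine chart $(U,x)$ of $X=\P^n$ around $f(z_0)$ in which $D=\{x_1=0\}$ (after a linear change of coordinates, since $D$ is a hyperplane), and write $f=(f_1,\dots,f_n)$ locally. Then $\widehat f(z)=\bigl(f(z),[f'_1(z):\cdots:f'_n(z)]\bigr)$, and the fibre coordinate of $[f'(z)]$ near $z_0$ must be normalised: writing $f'_j(z)=(z-z_0)^{\nu_{z_0,f'}}g_j(z)$ with the $g_j$ holomorphic and not all vanishing at $z_0$, the lifted curve is $\widehat f(z)=\bigl(f(z),[g_1(z):\cdots:g_n(z)]\bigr)$.

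Next I would recall from Section~\ref{sec-thjedefecto} that, in the chart $U$ where $D=\{x_1=0\}$, the variety $\widehat D$ is defined by the two equations $x_1=0$ and $\langle \d\sigma_a(x),v\rangle=0$, i.e. by $x_1=0$ and $v_1=0$ in suitable fibre coordinates $[v_1:\cdots:v_n]$ (the derivative of the linear form $x_1$ picks out exactly the $v_1$-component). Hence $\nu_{z_0,\widehat f,\widehat D}$ is, by its definition (the one from Lemma~\ref{le-tinhhieuNN1boitangentcuren}), the minimum of the vanishing orders at $z_0$ of the two functions $f_1(z)$ and $g_1(z)$ pulled back by $\widehat f$ — but one must be slightly careful: $g_1$ is only the chosen representative of the projective fibre coordinate, and on the standard affine charts $\widetilde U_j$ of the blowup $\widehat X=\P(\Ta X)$ the fibre is trivialised by dividing by whichever $g_j$ is a unit at $z_0$; so on the relevant chart the pulled-back $v_1$-coordinate is $g_1/g_j$, which has vanishing order $\ord_{z_0}g_1=0$ when $j$ is chosen so $g_j(z_0)\neq0$, but we want the order along the section $z\mapsto g_1(z)/g_j(z)$ relative to the hypersurface $\{v_1=0\}$, which is $\ord_{z_0}g_1$. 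I would therefore argue: since $\d\sigma_a$ is a nonzero linear form, $\langle\d\sigma_a(f(z)),f'(z)\rangle=f'_1(z)=(z-z_0)^{\nu_{z_0,f'}}g_1(z)$, and also $f_1(z)=f'_1$ integrated, so $\ord_{z_0}f_1=\ord_{z_0}f'_1+1=\nu_{z_0,f'}+\ord_{z_0}g_1$ whenever $\ord_{z_0}f_1\ge 1$ (which holds since $z_0\in\supp f^*D$). Consequently $\nu_{z_0,f,D}=\ord_{z_0}f_1=1+\nu_{z_0,f'}+\ord_{z_0}g_1$, while $\nu_{z_0,\widehat f,\widehat D}=\min\{\ord_{z_0}(f_1\circ\widehat f\text{-chart}),\ \ord_{z_0}(v_1\circ\widehat f\text{-chart})\}=\min\{\ord_{z_0}f_1,\ \ord_{z_0}g_1\}=\ord_{z_0}g_1$ (the last equality because $\ord_{z_0}f_1=1+\nu_{z_0,f'}+\ord_{z_0}g_1>\ord_{z_0}g_1$). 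Combining the two displays gives $\nu_{z_0,f,D}-1=\nu_{z_0,f'}+\nu_{z_0,\widehat f,\widehat D}$, and the equality $\nu_{z,\widehat f,\widehat D}=\nu_{z,\tilde f,\tilde D}$ is the already-cited consequence of \eqref{ine-multzDDnga}. This proves \eqref{ine-multzDDnganew}.

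For the "Consequently" part I would first note that for $D$ with $D\cap\supp f_*R_f=\varnothing$ we have $\nu_{z,f'}=0$ for every $z\in\supp f^*D$: indeed $z\in\supp f^*D$ forces $f(z)\in D$, and $\nu_{z,f'}>0$ would put $f(z)\in\supp f_*R_f\cap D=\varnothing$. Hence \eqref{ine-multzDDnganew} collapses to $\nu_{z,f,D}-1=\nu_{z,\tilde f,\tilde D}=\nu_{z,\widehat f,\widehat D}$ for all $z\in\supp f^*D$; summing over $z\in\D_t$ and integrating $\int_1^r\frac{\d t}{t}$, and using that $\sum_{z\in\D_t}1=n_f^{[1]}(t,D)$ (the truncation-$1$ count, finite for each $t$) over exactly the points of $\supp f^*D\cap\D_t$, we get $n_{\tilde f}(t,\tilde D)=n_f(t,D)-n^{[1]}_f(t,D)$ and likewise with $\widehat f$, whence $N_{\tilde f}(r,\tilde D)=N_{\widehat f}(r,\widehat D)=N_f(r,D)-N^{[1]}_f(r,D)$ by Lemma~\ref{le-tinhhieuNN1boitangentcuren} (which gives $N_{\tilde f}(r,\tilde D)=N_{\widehat f}(r,\widehat D)$ directly) together with the definition of the truncated counting functions.

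The main obstacle I anticipate is the bookkeeping around the projective fibre coordinates of $\widehat X=\P(\Ta X)$: one has to be scrupulous about which standard affine chart $\widetilde U_j$ of the blowup the lifted curve $\widehat f$ lands in near $z_0$, about the fact that the "multiplicity" $\nu_{z,\widehat f,\widehat D}$ from Lemma~\ref{le-tinhhieuNN1boitangentcuren} is defined as a minimum of vanishing orders of the defining functions of $\widehat D$ in local coordinates (so one needs those defining functions to be, after the chart change, exactly $x_1$ and $g_1/g_j$ up to units), and about verifying that $\ord_{z_0}g_1$ is indeed the correct contribution and does not get absorbed or double-counted against $\nu_{z_0,f'}$. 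The chart-independence of all three quantities $\nu_{z,f,D}$, $\nu_{z,\widehat f,\widehat D}$, $\nu_{z,f'}$ is asserted in the text, so once the computation is done in one well-chosen chart the identity holds in general; but getting the normalisation $f'=(z-z_0)^{\nu_{z_0,f'}}(g_1,\dots,g_n)$ to interact correctly with the definition of $\widehat D$ as the tangency locus is the delicate point.
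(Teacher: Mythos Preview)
Your argument is correct and follows essentially the same route as the paper: choose local coordinates with $D=\{x_1=0\}$ so that $\widehat D=\{x_1=v_1=0\}$, normalise the fibre coordinate by the $g_j$ that does not vanish at $z_0$, and compare $\ord_{z_0}f_1$ with $\ord_{z_0}(g_1/g_j)$ to get the minimum. (There is a harmless slip in your displayed chain ``$\ord_{z_0}f_1=\ord_{z_0}f'_1+1=\nu_{z_0,f'}+\ord_{z_0}g_1$'': the last term should be $1+\nu_{z_0,f'}+\ord_{z_0}g_1$, as you correctly write in the very next sentence.)
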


Note that since $\supp f_* R_f$ is an at most countable set,  the set of $D$ for which $D \cap \supp f_* R_f  \not =\varnothing$ is a countable union of hyperplanes in $\P(E)$.

\proof We already proved the second inequality of (\ref{ine-multzDDnganew}). It remains to prove the first one.  Consider now $z_0 \in \supp f^* [D]$. Thus $\nu_{z_0,f,D}\ge 1$.

 Let $(U,x)$ be  a local chart around $f(z_0)$ such that $x=(x_1,\ldots, x_n)$ and $D=\{x_1=0\}$.  We then obtain an induced coordinate system $(x,v)$ on $TX|_U= U \times \C^n$ and $\widehat D$ is given by $x_1= v_1 =0$ there.  Write $f=(f_1, \ldots, f_n)$ in these coordinates. Hence $f^*[D]$ is the divisor generated by $f_1$ on $f^{-1}(U)$.  In these coordinates, we have $\P(\Ta U)=U \times \P^{n-1}$. 

Put $f':=(f'_1, \ldots, f'_n)$.  We have $\widehat f=(f, [f']) \in U \times \P^{n-1}$ which is a holomorphic curve. Without loss of generality, since $(z-z_0)^{-\nu_{z_0,f'}}f'_j(z)$ isn't zero at the same time at $z_0$ for $1 \le j \le n$, we can assume that $(z-z_0)^{-\nu_{z_0,f'}}f'_n(z)$ is not zero at $z=z_0$. 

 Using standard local charts on $\P^{n-1}$, we can cover $U\times \P^{n-1}$ by a finite number of local charts. Let $\widehat U$ be the local chart with $v_n=1$. So the coordinates on $\widehat U$ are $(x, v_1, \ldots, v_{n-1})$ and $\widehat D$ is still given by $x_1= v_1 =0$. In these coordinates, $\widehat f= \big(f, g_1, \ldots, g_{n-1}\big)$,  where $g_j:= f'_j/ f'_n$ for $1 \le j \le n-1$.  Observe that  the order of $g_1$ at $z_0$ is $(\nu_{z_0,f,D}-1- \nu_{z_0,f'})< \nu_{z_0,f,D}$. Thus, 
$$\nu_{z_0,\widehat f,\widehat D}=\nu_{z_0,f,D}-1- \nu_{z_0,f'}.$$
So (\ref{ine-multzDDnganew}) follows.   This finishes the proof.
\endproof

\begin{proposition} \label{pro-hieuTfvaN1bieudhien} For $D$ with $D\cap \supp f_* R_f =\varnothing$, we have 
\[
1- \frac{N^{[1]}_{f}(r, D)}{T_f(r, \omega)}=  \langle \tilde{S}_r, \eta \rangle + \dfrac{O(1)}{ T_f(r,\omega)}
\eqno
{{\scriptstyle (r\,\to\, \infty)}.}
\]
\end{proposition}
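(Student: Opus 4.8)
The plan is to combine the First Main Theorem for $\widehat f$ with respect to $\widehat D$, the comparison of characteristic functions from the tautological inequality (Lemma \ref{letautologi}), and the multiplicity identity from Lemma \ref{le-tinhhieuNN1boitangentcurennew}. First I would write the First Main Theorem for $\tilde f$ and $\tilde D$ in the form
\[
\frac{N_{\tilde f}(r,\tilde D)}{T_{\widehat f}(r,\widehat\omega)} = 1 - \langle \tilde S_r, \eta\rangle + \frac{O(1)}{T_{\widehat f}(r,\widehat\omega)},
\]
exactly as in \eqref{eq-fngaDnga}, using that $[\tilde D] = \ddc \varphi_{\tilde D} + \eta$ and that $\ddc \tilde S_r$ has mass $O(1)\,\widehat c_r^{-1}$ (here $\widehat c_r := T_{\widehat f}(r,\widehat\omega)$). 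Since $D \cap \supp f_* R_f = \varnothing$, Lemma \ref{le-tinhhieuNN1boitangentcurennew} gives $N_{\tilde f}(r,\tilde D) = N_f(r,D) - N_f^{[1]}(r,D)$, so the left-hand side becomes $\big(N_f(r,D) - N_f^{[1]}(r,D)\big)/T_{\widehat f}(r,\widehat\omega)$.

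Next I would pass from $T_{\widehat f}(r,\widehat\omega)$ to $T_f(r,\omega)$. By Lemma \ref{letautologi}, $T_{\widehat f}(r,\widehat\omega) = T_f(r,\omega) + o(T_f(r,\omega))$ as $r\to\infty$ outside a set of finite Lebesgue measure, and our chosen sequence $\mathbf r = (r_k)$ avoids that exceptional set; so along $r = r_k$ the two characteristic functions are asymptotically equal. Combined with the First Main Theorem for $f$ and $D$ in the form \eqref{eq-firstmain}, namely $N_f(r,D)/T_f(r,\omega) = 1 + \langle \varphi_D, \ddc S_r\rangle$, and the classical inequality $N_f(r,D) \le T_f(r,\omega) + O(1)$ guaranteeing that the quotients are bounded, I can rearrange to obtain
\[
1 - \frac{N_f^{[1]}(r,D)}{T_f(r,\omega)} = \langle \tilde S_r, \eta\rangle + \frac{N_f(r,D) - T_f(r,\omega)}{T_f(r,\omega)} + \frac{o(T_f(r,\omega))}{T_f(r,\omega)} + \frac{O(1)}{T_f(r,\omega)}.
\]
Then Lemma \ref{le-DDngapotential} identifies $\langle \varphi_D, \ddc S_r\rangle$ with $\langle \varphi_{\tilde D}, \ddc \tilde S_r\rangle + O(1)\,c_r^{-1}$, and re-running the First Main Theorem computation for $\tilde f$, $\tilde D$ absorbs the middle term into $\langle \tilde S_r,\eta\rangle$ plus an $O(1)/T_f(r,\omega)$ error. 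A cleaner route I would actually follow: directly subtract the FMT identity for $\tilde f,\tilde D$ from that for $f, D$, use Lemma \ref{le-DDngapotential} to match the potential pairings, and use Lemma \ref{le-tinhhieuNN1boitangentcurennew} to convert $N_{\tilde f}(r,\tilde D)$ into $N_f(r,D)-N_f^{[1]}(r,D)$; the $N_f(r,D)$ terms cancel, leaving precisely the claimed identity with $T_{\widehat f}(r,\widehat\omega)$ in the denominator, which Lemma \ref{letautologi} replaces by $T_f(r,\omega)$ modulo the stated error.

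The only delicate point is bookkeeping of which denominator ($c_r = T_f(r,\omega)$, $\widehat c_r = T_{\widehat f}(r,\widehat\omega)$, or $\tilde c_r$) appears where, and ensuring the $o(T_f(r,\omega))$ from the tautological inequality does not degrade the stated $O(1)/T_f(r,\omega)$ error. This is resolved because the identity is asserted only as $r\to\infty$ along the fixed sequence $\mathbf r$ for which \eqref{eq-limitSngarkfhat} and Lemma \ref{letautologi} hold, so after taking the limit the $o$-term contributes nothing and the proposition is an exact asymptotic identity along $(r_k)$; alternatively one states it with $o(1)$ in place of $O(1)/T_f(r,\omega)$, which is all that is used afterwards. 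I expect the main obstacle to be purely organizational — correctly threading Lemmas \ref{le-DDngapotential}, \ref{le-tinhhieuNN1boitangentcurennew}, and \ref{letautologi} together without circularity — rather than any genuinely new estimate.
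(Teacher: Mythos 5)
Your ``cleaner route'' is exactly the paper's proof: write the First Main Theorem identity for $\tilde f,\tilde D$ as $N_{\tilde f}(r,\tilde D)/\widehat c_r = \langle \tilde S_r,\eta\rangle + \langle\varphi_{\tilde D},\ddc\tilde S_r\rangle$, use Lemma~\ref{le-DDngapotential} to replace $\langle\varphi_{\tilde D},\ddc\tilde S_r\rangle$ by $\langle\varphi_D,\ddc S_r\rangle + O(1)/T_f(r,\omega)$, invoke the First Main Theorem for $f,D$ to turn the latter into $N_f(r,D)/T_f(r,\omega)-1$, use Lemma~\ref{le-tinhhieuNN1boitangentcurennew} to write $N_{\tilde f}(r,\tilde D)=N_f(r,D)-N^{[1]}_f(r,D)$ so the $N_f(r,D)$ terms cancel, and finally replace $\widehat c_r$ by $T_f(r,\omega)$ via Lemma~\ref{letautologi} along the fixed sequence $\mathbf r$. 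Your observation that the tautological inequality introduces an $o(1)$ rather than an $O(1)/T_f(r,\omega)$, and that this is harmless because the identity is only used asymptotically along $(r_k)$, is correct and in fact a slight imprecision in the paper's statement that you have rightly noticed.

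Your first route, however, contains a genuine error you should not leave in. The displayed ``First Main Theorem'' $N_{\tilde f}(r,\tilde D)/T_{\widehat f}(r,\widehat\omega) = 1 - \langle\tilde S_r,\eta\rangle + O(1)/T_{\widehat f}(r,\widehat\omega)$ is not what the FMT gives: there is no ``$1$'', the sign on $\langle\tilde S_r,\eta\rangle$ is a plus (the ``$1$'' in \eqref{eq-firstmain} comes from $\langle S_r,\omega\rangle=1$ because $S_r$ is normalized by $T_f(r,\omega)$, whereas $\eta$ is a representative of $\{\tilde D\}$, not the normalizing K\"ahler class), and, most seriously, the term $\langle\varphi_{\tilde D},\ddc\tilde S_r\rangle$ cannot be dismissed as $O(1)/\widehat c_r$ just because $\|\ddc\tilde S_r\| = O(1)\widehat c_r^{-1}$, since $\varphi_{\tilde D}$ is unbounded (it is $\log\dist(\cdot,\tilde D)+O(1)$). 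Controlling that pairing is exactly the point of Lemma~\ref{le-DDngapotential} together with the FMT for $f,D$; it is not a free bounded error. Equation~\eqref{eq-fngaDnga} is also not a template for your display --- it is an inequality, lacks the ``$1-$'', and lives in the setting of Section~\ref{sec-trunc1} where $\tilde S_r$ carries a $c_r$ normalization rather than $\widehat c_r$. Your subsequent second display inherits this confusion (the $N_f(r,D)-T_f(r,\omega)$ term has no business surviving); the chain only closes once you actually run Lemma~\ref{le-DDngapotential} and the FMT for $f,D$, which is what your ``cleaner route'' does. In short: delete the first route and keep the second, which is the paper's argument.
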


\proof  Starting from First Main Theorem and using Lemma \ref{letautologi}, we get 
\begin{align*} 
 \frac{N_{\tilde{f}}(r, \tilde{D})}{T_{f}(r, \omega)}
 &= \langle \tilde{S}_r, \eta \rangle + \langle \varphi_{\tilde{D}}, \ddc \tilde{S}_r \rangle\\
  \explain{Use Lemma \ref{le-DDngapotential}}
 \ \ \ \ \ \ \ \ \ \ \ \ \
 &
 =
\langle \tilde{S}_r, \eta \rangle
 +
 \langle \varphi_{D}, \ddc S_r \rangle+ O(1)[T_{f}(r,  \omega)]^{-1}\\
 \explain{Use First Main Theorem}
\ \ \ \ \ \ \ \ \ \ \ \ \
&=
\langle \tilde{S}_r, \eta \rangle
+
\frac{N_{f}(r, D)}{T_{ f}(r,  \omega)}
-1
 + \dfrac{O(1)}{ T_f(r,\omega)}\\
 \explain{Use Lemma \ref{le-tinhhieuNN1boitangentcurennew}}
 \ \ \ \ \ \ \ \ \ \ \ \ \
 &=
 \langle \tilde{S}_r, \eta \rangle
 + \frac{N_{\tilde{f}}(r,\tilde{D})+N_f^{[1]}(r, D)}{T_{f}(r, \omega)}
 -
 1
 + \dfrac{O(1)}{ T_f(r,\omega)},
\end{align*}
which yields the desired equality.
\endproof

\begin{proof}[End of the proof of Theorem \ref{thedefect0} when $n\ge 2$]  Recall that we reduced the problem to the case where $X= \P^n$ and $L$ is the hyperplane line bundle. We also have fixed a hyperplane $D$ on $X$ such that  the tangent current of $\widehat S$ along $\widehat D$ is zero and $\supp D \cap \supp f_* R_f= \varnothing$.  We had also shown that  the set of such $D$ contains the complement of a countable union  of proper analytic subsets of $\P(E)$. By Proposition \ref{pro-hieuTfvaN1bieudhien} and (\ref{eq-limitSngarkfhat}), one gets
$$T_f(r_k, \omega)- N^{[1]}_f(r_k,D)= o\big(T_f(r_k,\omega)\big)$$
as $k\to \infty.$ Hence $\delta^{[1]}_f(D)=0$.  This finishes the proof.   
\end{proof}

Finally, to finish the proof of Theorem \ref{thedefect0}, we treat the remaining case where $n=1$, \emph{i.e.,} $X=\P^1$. In this case, Theorem \ref{thedefect0} is a direct consequence of the following general result.

\begin{lemma} \label{le-truncationn} Let $f: \C \to \P^n$ be a  non-constant holomorphic curve. There exists a countable union $\mathcal{A}$ of proper linear subspaces of the space of hyperplanes of $\P^n$ such that  for any hyperplane $D\not \in \mathcal{A}$, we have  
$$\delta^{[n]}_f(D)=0.$$
	\end{lemma}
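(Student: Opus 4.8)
The plan is to deduce Lemma \ref{le-truncationn} from Cartan's Second Main Theorem \eqref{ine-cartan} in essentially the same way that one obtains zero defect for a single very generic hyperplane, after first reducing to an algebraically non-degenerate curve. First I would handle the degenerate case: if $f(\C)$ lies in some proper linear subspace $\P^m \subset \P^n$ with $m < n$ minimal, then for any hyperplane $D$ not containing $\P^m$, the restriction $f \colon \C \to \P^m$ is linearly non-degenerate, $D \cap \P^m$ is a hyperplane of $\P^m$, and $N_f^{[n]}(r,D) = N_f^{[m]}(r, D\cap \P^m)$ while $T_f(r)$ is computed the same way in $\P^m$; so it suffices to treat the linearly non-degenerate case (absorbing into $\mathcal{A}$ the finitely many, hence at most countably many, linear families of hyperplanes through $\P^m$, and noting the statement is about truncation $n \ge m$, so $\delta_f^{[n]}(D) \le \delta_f^{[m]}(D\cap\P^m)$).

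Next, assuming $f$ linearly non-degenerate, I would build $\mathcal{A}$ as a countable union as follows. Cartan's theorem \eqref{ine-cartan} applies to any family of $q \ge n+2$ hyperplanes in general position. Fix a countable dense family $\mathcal{H} = \{H_i\}_{i \in \N}$ of hyperplanes, chosen so that any $n+1$ of them are in general position (a Baire/countability argument: the ``bad'' configurations form a countable union of proper subvarieties, so a generic countable sequence avoids them). For each finite subset $I \subset \N$ with $|I| = n+2$ consisting of hyperplanes in general position, \eqref{ine-cartan} gives, for $r$ outside a set of finite Lebesgue measure,
\begin{align*}
T_f(r) \le \sum_{j \in I} N_f^{[n]}(r, H_j) + O\big(\log T_f(r) + \log r\big).
\end{align*}
Dividing by $T_f(r)$ and taking $\liminf$, and using $N_f^{[n]}(r,H) \le T_f(r) + O(1)$ for each $H$, one gets $\sum_{j \in I} \delta_f^{[n]}(H_j) \le n+1$, hence $\sum_{j \in I}\big(1 - \delta_f^{[n]}(H_j)\big) \ge 1$, so at least one $H_j$ with $j \in I$ has $\delta_f^{[n]}(H_j) < 1$. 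Iterating over disjoint such blocks $I$ shows that $\delta_f^{[n]}(H_i) < 1$ for infinitely many $i$; but I actually want $\delta_f^{[n]}(D) = 0$, not just $< 1$, so I need a sharper input.

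To get the value $0$ I would instead argue as in Lemma \ref{le-truncationn}'s likely intended proof via a limiting/perturbation scheme: for any hyperplane $D$ and any $\epsilon > 0$, one can choose $q$ large and $q$ hyperplanes in general position, $q - 1$ of them clustered near $D$ and one generic auxiliary one, so that Cartan's inequality forces $(q - n - 1)T_f(r) \le (q-1)N_f^{[n]}(r, D') + T_f(r) + \text{error}$ for nearby $D'$; since $N_f^{[n]}$ depends semicontinuously on the hyperplane and the error is $o(T_f)$ off a small exceptional set, letting $q \to \infty$ yields $T_f(r) \le \liminf N_f^{[n]}(r,D) + o(T_f(r))$ for $D$ outside the union over all these finite configurations of the exceptional ``defect $>0$'' sets, each of which is contained in a proper linear subspace of the dual $\check{\P}^n$. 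Taking the countable union of all these bad linear families produces $\mathcal{A}$, and for $D \notin \mathcal{A}$ we get $\delta_f^{[n]}(D) = 0$.

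\textbf{Main obstacle.} The delicate point is organizing the countable union: Cartan's theorem only gives information about \emph{families} of hyperplanes, and extracting a conclusion about a \emph{single} generic $D$ requires showing that the set of $D$ for which the argument fails is genuinely contained in a proper algebraic (indeed linear) subvariety of $\check{\P}^n$ — this comes down to the fact that, for a fixed generic companion configuration, the function $D \mapsto \liminf_r N_f^{[n]}(r,D)/T_f(r)$ jumps below $1$ away from a proper subvariety, together with a careful bookkeeping of the finite-measure exceptional $r$-sets (which do not interfere, since $\liminf$ over $r$ ignores them) and of the $O(\log T_f + \log r)$ error terms, which are $o(T_f(r))$ because $f$ is transcendental. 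I would expect the bulk of the write-up to be this bookkeeping plus the reduction to the non-degenerate case; the analytic heart is entirely contained in the already-cited Cartan inequality \eqref{ine-cartan}.
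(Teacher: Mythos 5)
Your reduction to the linearly non-degenerate case and your identification of Cartan's Second Main Theorem \eqref{ine-cartan} as the analytic engine are correct, and you rightly observe that the naive ``one of every $n+2$ hyperplanes has defect $<1$'' argument is insufficient. However, your perturbation scheme, which you guess is the intended proof, is not what the paper does and it has a genuine gap: the step asserting that $N_f^{[n]}(r,D')$ behaves semicontinuously in $D'$ so that clustering $q-1$ hyperplanes near $D$ transfers information to $D$ itself is unjustified (at fixed $r$ the truncated counting function can jump with $D$, and the exceptional $r$-set in \eqref{ine-cartan} depends on the configuration), and the claim that the resulting bad set lies inside a proper \emph{linear} subvariety of $\check{\P}^n$ is asserted rather than proven, which is exactly the content one needs.

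The paper's argument is simpler and avoids all continuity issues. From \eqref{ine-cartan} it first extracts the defect relation $\sum_{j=1}^q \delta^{[n]}_f(D_j) \le n+1$ for every family of $q$ hyperplanes in general position, then sets $\mathcal{A}_k := \{D : \delta^{[n]}_f(D) \geq 1/k\}$ and argues by contradiction: if some $\mathcal{A}_k$ were not contained in a countable union of proper linear subspaces, one could iteratively select $D_0,\dots,D_q \in \mathcal{A}_k$ in general position for every $q$ (at each step the spans already chosen form a finite union of proper linear subspaces, so $\mathcal{A}_k$ meets its complement by hypothesis), and the defect relation applied to this family gives $q/k \le n+1$, impossible for $q$ large. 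Then $\mathcal{A} := \bigcup_k \mathcal{A}_k$ is the desired countable union. The idea you are missing is to bound the size of any general-position subfamily of the sublevel set $\{\delta^{[n]}_f \ge 1/k\}$ and to let the thinness hypothesis manufacture an unboundedly large such subfamily, rather than trying to control a single $D$ directly.
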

\begin{proof} Without loss of generality, we can assume that $f$ is linearly non-degenerate because otherwise we can consider the smallest linear subspace of $\P^n$ containing $f(\C)$.  Let $(D_j)_{j=1}^q$  be a family of hyperplanes in general position. By Cartan's Second Main Theorem \cite{Cartan1933}, we get the following defect relation 
\begin{align} \label{ine-defectrelationhyper}
\sum_j^q \delta^{[n]}_f(D_j) \le n+1.
\end{align}
For any positive number $k$, set $\mathcal{A}_k:=\{D:  \delta^{[n]}_f(D)\geq 1/k\}$. It is clear that the desired equality holds true for all $D\notin\cup_{k=1}^{\infty}\mathcal{A}_k$. Thus the problem reduces to proving that for each integer number $k$, the set $\mathcal{A}_k$ is contained in a countable union of proper linear subspaces of $\P^n$.
	
Suppose on the contrary that this is not the case  for some $k$. For any positive integer $q$, we now construct a family of hyperplanes $\{D_i\}_{1\leq i\leq q}\subset \mathcal{A}_k$ in general position in $\mathbb{ P}^n$. 
	
We first taking a divisor $D_0\in\mathcal{A}_k\setminus\{0\}$ and consider the linear subspace $\mathcal{E}_0$ of $E$ generated by $D_0$. Since $\mathcal{E}_0$ is a proper linear subspace of $\P^n$, $\mathcal{A}_k\setminus \mathcal{E}_0$ is nonempty. Taking $D_1\in\mathcal{A}_k\setminus \mathcal{E}_0$ and let $\mathcal{E}_1$ be the vector space generated by $D_0,D_1$. Again since $\mathcal{E}_1$ is a linear space, $\mathcal{A}_k\setminus \mathcal{E}_1$ is not empty and we can pick in this set a hyperplane $D_2$. Iterating this process, we obtain at the $(q+1)^{\text{th}}$--step a family $\{D_i\}_{0\leq i\leq q}\subset \mathcal{A}_k$ of $q+1$ divisors in general position in $\mathbb{ P}^n$. Applying (\ref{ine-defectrelationhyper}) to this family gives
	$$n+1 \ge q/k.$$
Letting $q\to \infty$ yields a contradiction. This finishes the proof.
\end{proof}

\bibliography{biblio_family_MA,biblio_Viet_papers}
\bibliographystyle{siam}

\bigskip

\noindent
\Addresses
\end{document}